\newcommand\cyr{%
 \renewcommand\rmdefault{wncyr}%
 \renewcommand\sfdefault{wncyss}%
 \renewcommand\encodingdefault{OT2}%
\normalfont\selectfont} \DeclareTextFontCommand{\textcyr}{\cyr}
\newtheorem{theorem}{Theorem}
\newtheorem{lemma}[theorem]{Lemma}
\newtheorem{corollary}[theorem]{Corollary}
\newtheorem{proposition}[theorem]{Proposition}
\theoremstyle{remark}
 \newtheorem{remark}[theorem]{Remark}
\def\Z{\mathbb Z}
\def\N{\mathbb N}
\def\Q{\mathbb Q}
\def\F{\mathbb F}
\def\cB{\mathcal B}
\def\cA{\mathcal A}
\def\cP{\mathcal P}
\def\fp{\mathfrak p}
\def\fc{\mathfrak c}
\def\fm{\mathfrak m}
\def\fP{\mathfrak P}
\def\fl{\mathfrak l}
\def\Ker{\operatorname{Ker}}
\def\deg{\operatorname{deg}}
\def\det{\operatorname{det}}
\def\tr{\operatorname{tr}}
\def\End{\operatorname{End}}
\def\Hom{\operatorname{Hom}}
\def\Gal{\operatorname{Gal}}
\def\Frob{\operatorname{Frob}}
\def\mod{\operatorname{mod}}
\def\dim{\operatorname{dim}}
\def\disc{\operatorname{disc}}
\def\exp{\operatorname{exp}}
\def\gcd{\operatorname{gcd}}
\def\GL{\operatorname{GL}}
\def\SL{\operatorname{SL}}
\def\PGL{\operatorname{PGL}}
\def\PSL{\operatorname{PSL}}
\def\O{\operatorname{O}}
\def\log{\operatorname{log}}
\def\ord{\operatorname{ord}}
\def\sep{\operatorname{sep}}
\def\ds{\displaystyle}
\begin{document}

\title{Drinfeld modules, Frobenius endomorphisms, and CM-liftings}


\author{
Alina Carmen Cojocaru and Mihran Papikian}
\address[Alina Carmen  Cojocaru]{
\begin{itemize}
\item[-]
Department of Mathematics, Statistics and Computer Science, University of Illinois at Chicago, 851 S Morgan St, 322
SEO, Chicago, 60607, IL, USA;
\item[-]
Institute of Mathematics  ``Simion Stoilow'' of the Romanian Academy, 21 Calea Grivitei St, Bucharest, 010702,
Sector 1, Romania
\end{itemize}
} \email[Alina Carmen  Cojocaru]{cojocaru@uic.edu}
\address[Mihran Papikian]{
\begin{itemize}
\item[-]
Department of Mathematics, 
Pennsylvania State University,
University Park, PA 16802, USA
\end{itemize}
} \email[Mihran Papikian]{papikian@math.psu.edu}

\thanks{
A.C. Cojocaru's  work on this material was partially supported by the National Science Foundation under agreements No. DMS-0747724 and No. DMS-0635607,
and by the European Research Council 
under Starting Grant 258713.}

\thanks{M. Papikian was supported in part by the Simons Foundation.}

\begin{abstract}
We give a global description of the Frobenius elements in the division fields of Drinfeld modules of rank $2$. 
We apply this description to derive a criterion for the splitting modulo primes of a class of non-solvable 
polynomials, and to study the frequency with which the reductions of 
Drinfeld modules have small endomorphism rings. We also generalize some of these results 
to higher rank Drinfeld modules and prove CM-lifting theorems for Drinfeld modules. 
\end{abstract}

\maketitle


\section{Introduction}

Given a finite Galois extension $L/K$ of global fields and a conjugacy class 
$C \subseteq \Gal(L/K)$, a fundamental problem is that of describing the (unramified) 
primes  $\mathfrak{p}$ of $K$ for which the conjugacy class  of the Frobenius  at $\mathfrak{p}$
is $C$. The Chebotarev 
Density Theorem provides the density $\#C/[L:K]$ of these primes, while, in general, the characterization 
of the primes themselves is a finer and deeper question.

One instance of a complete answer  to this question is that of the cyclotomics. For example, 
for $a$ an odd positive integer, $\Gal(\Q(\zeta_a)) \simeq (\Z/a \Z)^\times$, and  so  for any  rational prime $p \nmid a$, 
the Frobenius at $p$ is uniquely determined by the residue class of $p$ modulo $a$; 
in particular, $p$ splits completely in $\Q(\zeta_a)$ if and only if $p \equiv 1 (\mod a)$. A similar  result was proven by Hayes \cite{Ha} 
for the cyclotomic function fields introduced by Carlitz.

Natural extensions of the cyclotomics occur in the context of abelian varieties and Drinfeld modules through the division fields associated to these objects. For an abelian variety  of dimension 1 (an elliptic curve), defined over a global field, an explicit global characterization of the Frobenius in the division fields  of the variety has been  obtained using central results from the theory of complex  multiplication, and 
similarly to the case of the cyclotomics, there are numerous  applications  of this characterization
(cf.  \cite{Shimura} and \cite{DuTo}). 
For a higher dimensional abelian variety, the question  of describing explicitly the Frobenius  
in the division fields of the variety is open.
The focus of our paper is an investigation of this  question in \textit{the context of Drinfeld modules}, as described below.

Let $F$ be the function field of a smooth, projective, geometrically irreducible curve over 
the finite field $\F_q$ with $q$ elements. 
We distinguish a place $\infty$ of $F$, called the \textbf{place at infinity}, and we let  
$A$ denote the ring of functions in $F$ which have no poles away from $\infty$. 
Let $K$ be a field equipped with a homomorphism $\gamma: A\to K$. If $\gamma$ 
is injective, we say that $K$ has \textbf{$A$-characteristic $0$}; if $\ker(\gamma)=\fp\lhd A$ 
is a non-zero (prime) ideal, then we say that $K$ has \textbf{$A$-characteristic $\fp$}. 
Note that $K$ contains $\F_q$ as a subfield. Let $\tau$ be the Frobenius endomorphism of $K$ relative to $\F_q$, 
 i.e.,  the map $x\mapsto x^q$, 
 and let  $K\{\tau\}$ be  the non-commutative ring of polynomials 
 in the indeterminate $\tau$ with coefficients in $K$ and the  commutation rule  $\tau c=c^q \tau$ for any $c \in K$.
A {\bf{Drinfeld $A$-module over $K$}} is a ring homomorphism 
\begin{align*}
\psi: A &\to K\{\tau\} \\
a &\mapsto \psi_a=\gamma(a)+\sum_{1 \leq i \leq n_a} \alpha_i \tau^i, \quad \alpha_{n_a}\neq 0,
\end{align*}
whose image is not contained in $K$. One shows that there is an integer $r\geq 1$, called the \textbf{rank of $\psi$}, 
such that $n_a=r\log_q |a|_\infty$ for all $a\in A$, where $|\cdot|_\infty$ is the normalized valuation of $F$ defined by $\infty$; see \cite{Dr1}. 
Two Drinfeld modules, $\psi, \phi$, are \textbf{isomorphic} over $K$ if there exists $c\in K^\times$ such that $\psi_a=c^{-1}\phi_a c$ 
for all $a\in A$. 

Let $\psi$ be a Drinfeld module of rank $r$ over $F$, with  
$\gamma$ being the canonical embedding of $A$ into its fraction field $F$ (this shall be our setting throughout). 
We say $\psi$ has \textbf{good reduction} at the prime $\fp$ of $A$ if we can find $\phi$ over $F$ with the following properties:
\begin{itemize}
\item[(i)] $\phi$ is isomorphic to $\psi$ over $F$;
\item[(ii)] for all $a\in A$, the coefficients of $\phi_a$ are integral at $\fp$;
\item[(iii)] the map 
\begin{align*}
\phi \otimes \F_\fp: A &\to \F_\fp\{\tau\}\\
a &\mapsto \phi_a\mod \fp
\end{align*}
is a Drinfeld $A$-module of rank $r$ over $\F_\fp:=A/\fp$.  
\end{itemize}
Let $\cP_\psi$ denote the set of primes of good reduction of $\psi$. We will often implicitly assume that $\psi$ itself satisfies 
(ii) and (iii) at a given prime of good reduction. 

The \textbf{ring of $K$-endomorphisms} of $\psi$, $\End_K(\psi)$, is the centralizer  in $K\{\tau\}$ of the
image of $A$ under $\psi$. Denote by $F_\infty$ the completion of $F$ at $\infty$. The ring 
$\End_K(\psi)$ is a projective $A$-module of rank $\leq r^2$ with 
the property that $D := \End_K(\psi) \otimes_A F$ is a division algebra over $F$ such that $D\otimes_F F_\infty$ 
is also a division algebra (over $F_\infty$). Moreover, 
if $K$ has $A$-characteristic $0$, then $D$ is a field extension of $F$ of degree $\leq r$; see \cite{Dr1}. 
In this last case, the place $\infty$ does not split in the extension $D/F$. We call a finite field 
extension $F'$ of $F$ \textbf{imaginary} if $\infty$ does not split in $F'$. 

The Drinfeld module $\psi$ endows the algebraic closure $\overline{K}$ of $K$ with an $A$-module structure, where $a\in A$ 
acts by $\psi_a$. We shall write ${^\psi}\overline{K}$ if we wish 
to emphasize this action. The \textbf{$a$-torsion} $\psi[a]\subset \overline{K}$ of $\psi$ is the 
kernel of $\psi_a$, i.e., the set of zeros of the polynomial $\psi_a(x):=\gamma(a)x+\ds\sum_{1 \leq i \leq n_a} \alpha_i x^{q^i}\in K[x]$. 
The field $K(\psi[a])$, obtained by adjoining the elements of $\psi[a]$ to $K$, is called the \textbf{$a$-th division field} of $\psi$. 
 
 It is clear that $\psi[a]$ has a natural structure of an $A$-module. 
 Assume $a$ is coprime to $\ker(\gamma)$, if the $A$-characteristic of $K$ is non-zero.  
Then $\psi[a] \simeq_A  (A/aA)^{\oplus r}$ and $\psi[a]\subset K^\mathrm{sep}$ (since $\psi_a'(x)=\gamma(a)\neq 0$). 
The action of $G_K:=\Gal(K^\mathrm{sep}/K)$  
on $\psi[a]$ gives rise to a Galois representation 
\begin{equation}\label{eqResRep}
\bar{\rho}_{\psi,a}: G_K\to \GL_r(A/aA) . 
\end{equation}
In the theory of Drinfeld modules, the study of the division fields and the Galois representations associated to 
$\psi$ plays a central role. For example, when $r=1$, 
this study leads to explicit class field theory of $F$ (see \cite{Dr1}, \cite{Ha}). 

In this paper we mostly deal with Drinfeld modules for $A=\F_q[T]$, which, in some respects, is similar to that of elliptic curves over $\Q$. 
Our first goal is 
\textit{to provide an explicit global characterization of  the Frobenius at a prime $\mathfrak{p}$ of $F$ in the division fields of $\psi$} 
when $r=2$. We also give a less explicit version of this result which is valid for any $r\geq 2$. 
These results have several interesting applications, including a criterion for the splitting modulo primes of a class of non-solvable 
polynomials studied by Abhyankar. 
The second goal of the paper is 
\textit{to study the frequency with which the reductions of $\psi$ modulo $\mathfrak{p}$ have a small endomorphism ring}. 
This result opens up further important questions about the behaviour of the  reductions  of $\psi$ modulo primes and 
broadens a major theme of research related to the Sato-Tate conjecture and the 
Lang-Trotter conjectures. 
Finally, the third goal of the paper is \textit{to prove  CM-lifting theorems for general Drinfeld modules}, 
providing a function field counterpart of Deuring's Lifting Theorem. 

Now we give the precise statements of our main results. 

\begin{theorem}\label{global-artin}
Let $q$ be an odd prime power,  $A = \F_q[T]$ and $F = \F_q(T)$.
Let $\psi: A \to F\{\tau\}$ be a Drinfeld $A$-module over $F$, of rank 2.
Let $\mathfrak{p} = p A \in \cP_\psi$ be a prime of good reduction of $\psi$, where $p \in A$ is monic and irreducible. 
Let $a_{\mathfrak{p}}(\psi), b_{\mathfrak{p}}(\psi), \delta_{\mathfrak{p}}(\psi)$ be the 
following uniquely determined elements of $A$:
\begin{enumerate}
\item[(a)]
$a_{\mathfrak{p}}(\psi)$ is the coefficient of $x$  in the $\mathfrak{p}$-Weil polynomial of $\psi$,
$$
P_{\psi, \mathfrak{p}}(x) = x^2 + a_{\mathfrak{p}}(\psi) x + u_{\mathfrak{p}}(\psi) p \in A[x], 
$$
where $u_{\mathfrak{p}}(\psi) \in \F_q^\times$;
\item[(b)]
$b_{\mathfrak{p}}(\psi)$ is the unique monic polynomial such that, for any root 
$\pi_{\mathfrak{p}}(\psi)$ of $P_{\psi, \mathfrak{p}}$,
$$
\End_{\F_\fp} (\psi \otimes \F_\fp)/ A[\pi_{\mathfrak{p}}(\psi)]
\cong_A A/ b_\fp(\psi) A; 
$$
\item[(c)]
$\delta_{\mathfrak{p}}(\psi)$ is the unique generator of the discriminant ideal of
$\End_{  \F_{\mathfrak{p} }}  (\psi \otimes \F_{\mathfrak{p}})$
satisfying
$$
a_{\mathfrak{p}}(\psi)^2
-
4 u_{\mathfrak{p}}(\psi) p
=
b_{\mathfrak{p}}(\psi)^2 \delta_{\mathfrak{p}}(\psi).
$$
\end{enumerate} 
Then, for any $a \in A$ coprime to $p$, the reduction modulo $a$ of the matrix 
$$
\begin{pmatrix}
-\frac{a_{\mathfrak{p}}(\psi)}{2} & \frac{\delta_{\mathfrak{p}}(\psi) b_{\mathfrak{p}}(\psi)}{2}
\\
\frac{b_{\mathfrak{p}}(\psi)}{2} & -\frac{a_{\mathfrak{p}}(\psi)}{2}
\end{pmatrix}
\in M_2(A)
$$
represents the conjugacy class in $\GL_2(A/aA)$ of the image under $\bar{\rho}_{\psi, a}$ of the Frobenius at $\mathfrak{p}$ 
in the $a$-division field $F(\psi[a])$ of $\psi$. 
\end{theorem}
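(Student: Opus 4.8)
The plan is to identify the Frobenius conjugacy class concretely inside $\End_{\F_\fp}(\psi\otimes\F_\fp)$ and then read off its action on the torsion module $\psi[a]$. Recall that for a Drinfeld module $\bar\psi:=\psi\otimes\F_\fp$ of rank $2$ over the finite field $\F_\fp$, the $q$-power Frobenius relative to $\F_q$, raised to the degree $\deg p$, is the Frobenius endomorphism $\pi_\fp:=\tau^{\deg p}\in\End_{\F_\fp}(\bar\psi)$; it is an element of the imaginary quadratic ``field'' $D=\End_{\F_\fp}(\bar\psi)\otimes_A F$ and satisfies its characteristic (Weil) polynomial $P_{\psi,\fp}(x)=x^2+a_\fp(\psi)x+u_\fp(\psi)p$. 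The reduction map identifies $\bar\rho_{\psi,a}(\Frob_\fp)$, up to conjugacy, with the image of $\pi_\fp$ acting on $\bar\psi[a]\simeq_A (A/aA)^{\oplus 2}$, because reduction mod $\fp$ induces a $G_{\F_\fp}$-equivariant isomorphism $\psi[a]\xrightarrow{\sim}\bar\psi[a]$ (here we use $a$ coprime to $p$, so the torsion is étale and unramified over $\fp$), and the arithmetic Frobenius of $\F_\fp$ acts on $\bar\psi[a]$ precisely as the endomorphism $\pi_\fp$ does. So the whole theorem reduces to computing the matrix of $\pi_\fp$ on a suitable $A/aA$-basis of $\bar\psi[a]$.

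Next I would pin down the arithmetic of $\pi_\fp$. Since $\End_{\F_\fp}(\bar\psi)$ is an $A$-order in the imaginary quadratic extension $D/F$ containing $A[\pi_\fp]$, and by definition $b_\fp(\psi)$ measures the index $\End_{\F_\fp}(\bar\psi)/A[\pi_\fp]\cong A/b_\fp(\psi)A$, there is an element $\omega\in\End_{\F_\fp}(\bar\psi)$ generating the maximal order's relevant part such that $\pi_\fp$ can be written in terms of $\omega$ and the ``half-trace'' of $\pi_\fp$. Concretely, write $\pi_\fp = -\tfrac{a_\fp(\psi)}{2} + \tfrac{b_\fp(\psi)}{2}\sqrt{\delta_\fp(\psi)}$ inside $D$, using that $q$ is odd so $2$ is invertible and that $a_\fp(\psi)^2-4u_\fp(\psi)p=b_\fp(\psi)^2\delta_\fp(\psi)$ by part (c); here $\sqrt{\delta_\fp(\psi)}$ denotes the endomorphism $\omega':=\frac{2\pi_\fp+a_\fp(\psi)}{b_\fp(\psi)}$, which lies in $\End_{\F_\fp}(\bar\psi)$ (this is exactly the content of $b_\fp$ being the conductor-type invariant) and whose square is $\delta_\fp(\psi)$. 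Then I would choose a basis of $\bar\psi[a]$ adapted to the $A/aA[\omega']$-module structure: since $a$ is coprime to $p$ one can check (via the Tate module / the structure of $\End$ acting on torsion, cf.\ the rank-$2$ Tate module being free of rank $1$ over $\End\otimes A_\ell$ away from bad primes, or more elementarily by a direct argument) that $\bar\psi[a]$ is free of rank $1$ over $(A/aA)[\omega']\cong (A/aA)[x]/(x^2-\delta_\fp(\psi))$. In that basis $\{e,\omega' e\}$, multiplication by $\omega'$ has matrix $\begin{pmatrix}0 & \delta_\fp(\psi)\\ 1 & 0\end{pmatrix}$, hence multiplication by $\pi_\fp=-\tfrac{a_\fp(\psi)}{2}+\tfrac{b_\fp(\psi)}{2}\omega'$ has matrix $\begin{pmatrix}-\frac{a_\fp(\psi)}{2} & \frac{\delta_\fp(\psi)b_\fp(\psi)}{2}\\ \frac{b_\fp(\psi)}{2} & -\frac{a_\fp(\psi)}{2}\end{pmatrix}\bmod a$, which is the asserted matrix.

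The step I expect to be the main obstacle is justifying that $\bar\psi[a]$ is free of rank one over the ring $(A/aA)[\omega']$ — equivalently, that no ``Cartan-type'' obstruction forces the Frobenius matrix into a non-cyclic form at primes dividing both $a$ and $b_\fp(\psi)$ or at primes where $\delta_\fp(\psi)$ is a square. In general the endomorphism algebra acting on $\ell$-torsion need not make it free over the order when $\ell\mid b_\fp(\psi)$; what saves us is that the statement is only about the \emph{conjugacy class} of the Frobenius in $\GL_2(A/aA)$, so it suffices that the characteristic polynomial of $\bar\rho_{\psi,a}(\Frob_\fp)$ and the $A/aA$-module structure of $\bar\psi[a]$ under $A[\pi_\fp]$ determine the conjugacy class, and to match that data with the displayed matrix. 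I would therefore argue locally: for each prime power $\ell^k \| a$, analyze $\bar\psi[\ell^k]$ as a module over $A_\ell[\pi_\fp]$ using that this $A_\ell$-algebra is either a field extension, a ramified/unramified order, or split, and in each case verify that the companion-type matrix built from $a_\fp(\psi), b_\fp(\psi), \delta_\fp(\psi)$ realizes the same conjugacy class (a Latimer--MacDuffee / Smith-normal-form bookkeeping, together with the fact that $\pi_\fp$ satisfies $P_{\psi,\fp}$). Assembling these local computations via the Chinese Remainder Theorem $A/aA\cong\prod_{\ell^k\|a} A/\ell^k$ then yields the global statement; the identification of the arithmetic Frobenius of $\F_\fp$ with the endomorphism $\pi_\fp$ on torsion, which I would cite from the theory of Drinfeld modules over finite fields (Drinfeld, and the Tate-module formalism), is the remaining ingredient.
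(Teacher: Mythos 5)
Your proposal is essentially the paper's proof: identify $\bar\rho_{\psi,a}(\Frob_\fp)$ with multiplication by $\pi_\fp$ on $\psi[a]$, show $\psi[a]$ is free of rank one as a module over $E_{\psi,\fp}/aE_{\psi,\fp}$, write $\pi_\fp=-\tfrac{a_\fp(\psi)}{2}+\tfrac{b_\fp(\psi)}{2}\sqrt{\delta_\fp(\psi)}$, and compute the matrix in the $A/aA$-basis $\{1,\sqrt{\delta_\fp(\psi)}\}$. The step you flag as the ``main obstacle'' (freeness of $\psi[a]$ over $(A/aA)[\omega']\cong E_{\psi,\fp}/aE_{\psi,\fp}$) is handled by the paper in one line: because $E_{\psi,\fp}$ is an $A$-order of rank $2$ and $\psi$ has rank $2$, the reduction $\psi\otimes\F_\fp$ is a rank-one elliptic $E_{\psi,\fp}$-module (commuting triangle $A\to E_{\psi,\fp}\hookrightarrow\F_\fp\{\tau\}$), whence $\phi[a]\simeq_{E_{\psi,\fp}}E_{\psi,\fp}/aE_{\psi,\fp}$ for $a$ coprime to $p$. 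This is the cleaned-up version of your ``Tate module free of rank one over $\End\otimes A_\ell$'' route; your fallback Latimer--MacDuffee local bookkeeping is unnecessary once this observation is made, and is indeed the more laborious path. The final matrix computation is identical.
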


An immediate consequence to this result is a criterion for the splitting completely of a prime in 
$F(\psi[a])$, reminiscent of that for cyclotomic fields:
\begin{corollary}\label{split-compl}
In the setting of Theorem \ref{global-artin}, the prime  
$\mathfrak{p}$ splits completely in $F(\psi[a])/F$ if and only if 
$$
a_{\mathfrak{p}}(\psi) \equiv -2\ (\mod a)
$$
and
$$
b_{\mathfrak{p}}(\psi) \equiv 0\ (\mod a).
$$
\end{corollary}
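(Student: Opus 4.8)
The prime $\mathfrak p$ splits completely in $F(\psi[a])/F$ precisely when the Frobenius at $\mathfrak p$ acts trivially on $\psi[a]$, i.e., when the image $\bar\rho_{\psi,a}(\Frob_{\mathfrak p})$ is the identity conjugacy class in $\GL_2(A/aA)$ — equivalently, since the identity is central, when that image \emph{equals} the identity matrix. By Theorem \ref{global-artin}, this image is the reduction modulo $a$ of
$$
M_{\mathfrak p} =
\begin{pmatrix}
-\tfrac{a_{\mathfrak p}(\psi)}{2} & \tfrac{\delta_{\mathfrak p}(\psi)\, b_{\mathfrak p}(\psi)}{2}\\[2pt]
\tfrac{b_{\mathfrak p}(\psi)}{2} & -\tfrac{a_{\mathfrak p}(\psi)}{2}
\end{pmatrix}.
$$
So the plan is simply to read off when $M_{\mathfrak p} \equiv I \pmod a$. (Here one uses that $q$ is odd, so $2 \in \F_q^\times$ is invertible in $A/aA$ and dividing by $2$ is legitimate.)

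First I would argue the "only if" direction. If $\mathfrak p$ splits completely, then $M_{\mathfrak p} \equiv I \pmod a$. Comparing the off-diagonal $(2,1)$ entries gives $\tfrac{b_{\mathfrak p}(\psi)}{2} \equiv 0 \pmod a$, hence $b_{\mathfrak p}(\psi) \equiv 0 \pmod a$; comparing the diagonal entries gives $-\tfrac{a_{\mathfrak p}(\psi)}{2} \equiv 1 \pmod a$, hence $a_{\mathfrak p}(\psi) \equiv -2 \pmod a$. (Note the $(1,2)$ entry $\tfrac{\delta_{\mathfrak p}(\psi) b_{\mathfrak p}(\psi)}{2}$ is then automatically $\equiv 0$, consistently.)

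Conversely, suppose $a_{\mathfrak p}(\psi) \equiv -2 \pmod a$ and $b_{\mathfrak p}(\psi) \equiv 0 \pmod a$. Then the diagonal entries of $M_{\mathfrak p}$ reduce to $-\tfrac{-2}{2} = 1$, and both off-diagonal entries, being multiples of $b_{\mathfrak p}(\psi)$, reduce to $0$; thus $M_{\mathfrak p} \equiv I \pmod a$. Since the identity matrix forms a singleton conjugacy class, Theorem \ref{global-artin} gives $\bar\rho_{\psi,a}(\Frob_{\mathfrak p}) = I$, so $\Frob_{\mathfrak p}$ acts trivially on $\psi[a]$ and $\mathfrak p$ splits completely in $F(\psi[a])/F$. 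There is no real obstacle here: the entire content of the corollary is already carried by Theorem \ref{global-artin}, and the only point requiring a word of care is the invertibility of $2$, guaranteed by the hypothesis that $q$ is odd, which lets us pass freely between the congruence conditions on the entries of $M_{\mathfrak p}$ and the congruence conditions on $a_{\mathfrak p}(\psi)$ and $b_{\mathfrak p}(\psi)$.
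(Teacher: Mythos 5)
Your proof is correct and takes the same route the paper intends: the paper simply asserts the corollary is "an immediate consequence" of Theorem \ref{global-artin}, and your argument—reading off when the Frobenius matrix reduces to the identity, noting that the identity is its own conjugacy class and that $2$ is invertible since $q$ is odd—is exactly the content being elided.
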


Moreover,  we deduce the  $A$-module structure of $\F_\fp$ defined by the reduction $\psi \otimes \F_\fp$:
\begin{corollary}\label{structure-mod-p}
In the setting of Theorem \ref{global-artin}, the $A$-module structure ${^\psi}\F_\fp$ is given explicitly by
$$
{^\psi}\F_\fp \simeq_A A/d_{1, \mathfrak{p}} (\psi) A \times A/d_{2, \mathfrak{p}}(\psi) A,
$$
where
$$
d_{1, \mathfrak{p}}(\psi) 
=
\gcd
\left(
\frac{b_{\mathfrak{p}}(\psi)  }{2}, \frac{a_{\mathfrak{p}}(\psi)}{2} + 1
\right) \in A,
$$
$$
d_{2, \mathfrak{p}}(\psi)
=
\frac{1 + a_{\mathfrak{p}}(\psi) + u_{\mathfrak{p}}(\psi) p}{d_{1, {\mathfrak{p}}}(\psi)} \in A,
$$
and
$d_{1, \mathfrak{p}}(\psi) $ divides $d_{2, \mathfrak{p}}(\psi)$ (hence are uniquely determined up to a constant factor).
In particular, if $b_{\mathfrak{p}}(\psi) =1$, then ${^\psi}\F_\fp$ is $A$-cyclic.
\end{corollary}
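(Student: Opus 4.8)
The plan is to read the structure of ${^\psi}\F_\fp$ directly off Theorem \ref{global-artin}. Write $\overline\psi := \psi\otimes\F_\fp$ and let $\Fr_\fp = \tau^{\deg p}\in\End_{\F_\fp}(\overline\psi)$ be its Frobenius endomorphism. As a set ${^\psi}\F_\fp$ is just $\F_\fp$, so it is a finite $A$-module of cardinality $q^{\deg p}$, and it is the direct sum of its $\fl$-primary components over the primes $\fl$ of $A$. Since for every prime $\fl$ the socle ${^\psi}\F_\fp[\fl] = \overline\psi[\fl](\F_\fp)$ embeds into $\overline\psi[\fl](\overline{\F_\fp})$, which has $A/\fl$-dimension at most $2$, the module ${^\psi}\F_\fp$ needs at most two generators, so ${^\psi}\F_\fp\cong A/e_1A\times A/e_2A$ with $e_1\mid e_2$; it remains to pin down $v_\fl(e_1),v_\fl(e_2)$ for each $\fl$. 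For a prime $\fl=(\ell)\neq\fp$ with $\ell$ monic and any $n\geq 1$, good reduction of $\psi$ at $\fp$ and coprimality of $a$ to $p$ give a $G_F$-equivariant identification of $\psi[\ell^n]$ with $\overline\psi[\ell^n]$ under which the Frobenius at $\fp$ acts as $\Fr_\fp$; combined with Theorem \ref{global-artin} this yields
$$
{^\psi}\F_\fp[\fl^n]=\ker\bigl(\Fr_\fp-1\mid\overline\psi[\ell^n]\bigr)\cong\ker\bigl((N\bmod\ell^n)\colon(A/\ell^n)^2\to(A/\ell^n)^2\bigr),
$$
where $N:=M-I\in M_2(A)$ and $M$ is the matrix displayed in Theorem \ref{global-artin} (integral since $q$ is odd).

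The only explicit computation is the Smith normal form of $N$ over the PID $A$. Expanding and using $\delta_\fp(\psi)b_\fp(\psi)^2=a_\fp(\psi)^2-4u_\fp(\psi)p$ from part (c) of Theorem \ref{global-artin} gives $\det N = 1+a_\fp(\psi)+u_\fp(\psi)p = P_{\psi,\fp}(1)$, while the gcd of the four entries of $N$ is $\gcd\bigl(b_\fp(\psi)/2,\,a_\fp(\psi)/2+1\bigr)=d_{1,\fp}(\psi)$ (here $\gcd(\delta_\fp(\psi)b_\fp(\psi)/2,\,b_\fp(\psi)/2)=b_\fp(\psi)/2$). Hence the invariant factors of $N$ are $d_{1,\fp}(\psi)$ and $\det N/d_{1,\fp}(\psi)=P_{\psi,\fp}(1)/d_{1,\fp}(\psi)=d_{2,\fp}(\psi)$; in particular $d_{1,\fp}(\psi)\mid d_{2,\fp}(\psi)$, so these are determined up to $\F_q^\times$. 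Reducing modulo $\ell^n$ with $n$ large and using the structure of modules over the chain ring $A/\ell^n$, the kernel above becomes $A/\fl^{v_\fl(d_{1,\fp}(\psi))}\times A/\fl^{v_\fl(d_{2,\fp}(\psi))}$; this is the $\fl$-primary component of ${^\psi}\F_\fp$, forcing $v_\fl(e_i)=v_\fl(d_{i,\fp}(\psi))$ for $i=1,2$ and every $\fl\neq\fp$.

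It remains to treat the prime $\fp$, where Theorem \ref{global-artin} gives no direct information. First, $\fp\nmid d_{1,\fp}(\psi)$: otherwise $p\mid a_\fp(\psi)+2$, which by the Weil bound $\deg a_\fp(\psi)\le\tfrac12\deg p<\deg p$ forces $a_\fp(\psi)=-2$, and then $p\mid b_\fp(\psi)$ would give $p^2\mid 4-4u_\fp(\psi)p$, impossible. So $v_\fp(d_{1,\fp}(\psi))=0$. Second, in $A$-characteristic $\fp$ the $\fp$-torsion $\overline\psi[\fp](\overline{\F_\fp})$ has $A/\fp$-dimension at most $1$ (rank $2$), so ${^\psi}\F_\fp$ is cyclic at $\fp$ and $v_\fp(e_1)=0$. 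Third, a cardinality count: ${^\psi}\F_\fp$ has order $q^{\deg p}$, while its prime-to-$\fp$ part has order $q^{\deg P_{\psi,\fp}(1)-(\deg p)\,v_\fp(P_{\psi,\fp}(1))}$ by the previous paragraph (the exponent uses $\sum_\fl(\deg\fl)v_\fl(\cdot)=\deg(\cdot)$ and $\deg P_{\psi,\fp}(1)=\deg p$), so the $\fp$-primary part has order $q^{(\deg p)\,v_\fp(P_{\psi,\fp}(1))}$, giving $v_\fp(e_2)=v_\fp(P_{\psi,\fp}(1))=v_\fp(d_{2,\fp}(\psi))$. Therefore $e_i$ and $d_{i,\fp}(\psi)$ agree at every prime, so ${^\psi}\F_\fp\cong A/d_{1,\fp}(\psi)A\times A/d_{2,\fp}(\psi)A$, and when $b_\fp(\psi)=1$ we have $d_{1,\fp}(\psi)\in\F_q^\times$ so ${^\psi}\F_\fp$ is $A$-cyclic. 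The one delicate point is this final step at $\fp$, since Theorem \ref{global-artin} only controls $a$ prime to $p$ and so sees only the prime-to-$\fp$ part directly; everything else reduces to the routine Smith normal form of $N$.
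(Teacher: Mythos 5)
Your proof is correct, and it takes a genuinely different, more self-contained route than the paper's. The paper argues in two quick strokes that both rely on external results: it gets $d_{1,\fp}(\psi)$ by combining Corollary~\ref{split-compl} with the criterion of \cite[Prop.~23]{CoSh} (``$\fp$ splits completely in $F(\psi[a])$ iff $A/aA\times A/aA$ embeds in ${^\psi}\F_\fp$'' for $a$ coprime to $p$), and it gets $d_{2,\fp}(\psi)$ from the Euler--Poincar\'e identity $\chi({^\psi}\F_\fp)=P_{\psi,\fp}(1)A$ quoted from \cite{Ge}. You instead form $N=M-I$ with $M$ the displayed matrix of Theorem~\ref{global-artin}, compute its Smith normal form over $A$ directly (invariant factors $\gcd(b_\fp/2, a_\fp/2+1)$ and $\det N/\gcd = P_{\psi,\fp}(1)/d_{1,\fp}$), and read off the $\fl$-primary parts of ${^\psi}\F_\fp$ for every $\fl\neq\fp$ via $\ker(N\bmod \ell^n)$. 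Both approaches have the same gap to close at the prime $\fp$, since Theorem~\ref{global-artin} only sees $a$ coprime to $p$; the paper handles this implicitly through the Euler--Poincar\'e characteristic, while you handle it explicitly and carefully (showing $\fp\nmid d_{1,\fp}(\psi)$ from the Weil bound, cyclicity of $\fp$-torsion in characteristic $\fp$, and a cardinality count using $\deg P_{\psi,\fp}(1)=\deg p$). Your version replaces two citations with a routine linear-algebra computation plus a counting argument, at the cost of being somewhat longer; the paper's is more compact but leaves more to the reader and to the references.
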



\begin{theorem}\label{thmMain4}
Let $A = \F_q[T]$ and $F = \F_q(T)$. Let $\psi: A \to F\{\tau\}$ be a Drinfeld $A$-module over $F$, of rank $r\geq 2$.
Let $\mathfrak{p} = pA$ be a prime of good reduction of $\psi$, 
and $\pi_\fp(\psi)$ be any root of the $\fp$-Weil polynomial $P_{\psi, \fp}$ of $\psi$. 
\begin{itemize}
\item[(a)]
There are uniquely determined non-zero monic polynomials $b_{\fp, 1}(\psi), \dots, b_{\fp, r-1}(\psi)\in A$ such that
$$ 
\End_{\F_\fp}(\psi\otimes \F_\fp)/A[\pi_\fp(\psi)]\cong_A  A/b_{\fp, 1}(\psi)A\oplus\cdots\oplus A/b_{\fp, r-1}(\psi)A, 
$$
and 
$$
b_{\fp, i}(\psi) \text{ divides }b_{\fp, i+1}(\psi) \text{ for $i=1, \dots, r-2$}. 
$$
\item[(b)]
If $r$ is coprime to $q$, then 
$$
\disc(P_{\psi, \fp}) A=\disc(\End_{\F_\fp}(\psi\otimes \F_\fp))(b_{\fp, 1}(\psi)\cdots b_{\fp, r-1}(\psi))^2,
$$
where $\disc(P_{\psi, \fp})$ is the discriminant of the polynomial of $P_{\psi, \fp}$, and $\disc(\End_{\F_\fp}(\psi\otimes \F_\fp))$ 
is the discriminant ideal of $\End_{\F_\fp}(\psi\otimes \F_\fp)$. 
\item[(c)]
Assume $0\neq m\in A$ is coprime to $p$. Let $J_m$ be the subfield of $F(\psi[m])$ fixed by $\bar{\rho}_{\psi, m}(G_F)\cap Z(A/mA)$, 
where $Z(A/mA)$ denotes the center of $\GL_r(A/mA)$. Then $\fp$ splits completely in $J_m$ if and only if $m$ 
divides $b_{\fp,1}(\psi)$. 
\end{itemize}
\end{theorem}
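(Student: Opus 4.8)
The plan is to reduce all three parts to the $A$-module structure of the $A$-order $E:=\End_{\F_\fp}(\psi\otimes\F_\fp)$ relative to its subring $A[\pi]$, where $\pi:=\tau^{\deg\fp}\in\F_\fp\{\tau\}$ is the Frobenius endomorphism of $\bar{\psi}:=\psi\otimes\F_\fp$; note $\pi\in E$ and $P_{\psi,\fp}$ is by definition its characteristic polynomial on a prime-to-$\fp$ Tate module. The crucial preliminary --- and the main obstacle --- is to check that $E/A[\pi]$ is \emph{finite}, equivalently that $A[\pi]$ and $E$ have the same $A$-rank $r$; only then do (a) and (b) even make sense. I would argue this as follows. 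Since $\F_\fp\{\tau\}$ is an Ore domain, $E$ has no zero divisors, so $F[\pi]$ is a field and the minimal polynomial $\mu_\pi$ of $\pi$ over $F$ is irreducible; as $P_{\psi,\fp}$ has the same irreducible factors as $\mu_\pi$, we get $P_{\psi,\fp}=\mu_\pi^{\,r/e}$ with $e:=[F(\pi):F]=\deg\mu_\pi$ dividing $r$. On the other hand, the constant term $P_{\psi,\fp}(0)$ equals, up to a factor in $\F_q^\times$, the generator $p$ of $\fp$ --- this is the rank-one content of the top exterior power of the Tate module (a Carlitz-type module), and for $r=2$ it is precisely the term $u_\fp(\psi)p$ displayed in Theorem~\ref{global-artin}. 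Hence $1=v_\fp(P_{\psi,\fp}(0))=(r/e)\,v_\fp(\mu_\pi(0))$ forces $r/e=1$, so $F(\pi)/F$ has degree $r$, $P_{\psi,\fp}=\mu_\pi$, and $A[\pi]\cong A[x]/(P_{\psi,\fp})$ is an $A$-order of rank $r$ in the field $F(\pi)$ --- the same rank as $E$ --- so $E/A[\pi]$ is finite.

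Part (a) is then the structure theorem for finitely generated torsion modules over the PID $A$: $E/A[\pi]\cong_A\bigoplus_j A/b_jA$ with uniquely determined non-constant monic polynomials $b_1\mid\cdots\mid b_s$. To get $s\le r-1$, I would observe that $A\cdot 1$ is saturated in $E$ (if $ax\in A$ with $0\neq a\in A$ and $x\in E$, then $x\in F\cap E=A$), so $E=A\cdot 1\oplus L$ with $L$ free of rank $r-1$; since $A\cdot 1\subseteq A[\pi]$, the module $E/A[\pi]$ is a quotient of $E/A\cong L$, hence generated by $r-1$ elements. Padding $b_{s+1}=\cdots=b_{r-1}=1$ yields the $b_{\fp,i}(\psi)$.

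For part (b), the hypothesis $\gcd(r,q)=1$ makes $F(\pi)/F$ separable (its degree $r$ is prime to the characteristic), so the trace form is nondegenerate, $\disc(P_{\psi,\fp})\neq 0$, and $F(\pi)$ is an \'etale $F$-algebra. I would then apply multiplicativity of the discriminant for the tower of full $A$-lattices $A\subseteq A[\pi]\subseteq E$ in $F(\pi)$, namely $\disc(A[\pi])A=\disc(E)\cdot[E:A[\pi]]^2$, and combine it with $\disc(A[\pi])A=\disc(P_{\psi,\fp})A$ (the discriminant of $A[x]/(\mu_\pi)$ is the polynomial discriminant of $\mu_\pi=P_{\psi,\fp}$) and $[E:A[\pi]]=(b_{\fp,1}(\psi)\cdots b_{\fp,r-1}(\psi))$ (the zeroth Fitting ideal of $E/A[\pi]$) to get the stated identity.

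For part (c): since $\gcd(m,p)=1$ and $\fp$ has good reduction, $\fp$ is unramified in $F(\psi[m])$ and $\bar{\rho}_{\psi,m}(\Frob_\fp)$ acts on $\bar{\psi}[m]$ as the endomorphism $\pi$. As $\Gal(J_m/F)=\bar{\rho}_{\psi,m}(G_F)/\big(\bar{\rho}_{\psi,m}(G_F)\cap Z(A/mA)\big)$, the prime $\fp$ splits completely in $J_m$ iff $\bar{\rho}_{\psi,m}(\Frob_\fp)$ is a scalar matrix, i.e. iff $(\pi-c)$ annihilates $\bar{\psi}[m]$ for some $c\in A$. Now $\bar{\psi}_m$ is separable (as $\gcd(m,p)=1$) and central in $E$; by the right division algorithm in $\F_\fp\{\tau\}$, $\bar{\psi}_m$ right-divides $\pi-c$ exactly when $\bar{\psi}[m]\subseteq\Ker(\pi-c)$, and the cofactor then automatically lies in $E$ (it commutes with $\bar{\psi}(A)$ after cancelling $\bar{\psi}_m$, legitimate since $\F_\fp\{\tau\}$ is a domain). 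Hence such a $c$ exists iff $\pi\in A+mE$. Finally $A+mE$ is a subring of $E$ containing $1$, so $\pi\in A+mE$ iff $A[\pi]\subseteq A+mE$; passing to $E/A\cong L$ and using $mE\cap A=mA$, this says the image of $A[\pi]/A$ in $E/A$ lies in $m(E/A)$, which by the elementary-divisor normal form of the full-rank inclusion $A[\pi]/A\hookrightarrow E/A$ (whose invariant factors are $b_{\fp,1}(\psi),\dots,b_{\fp,r-1}(\psi)$) holds iff $m\mid b_{\fp,i}(\psi)$ for all $i$, i.e. iff $m\mid b_{\fp,1}(\psi)$.
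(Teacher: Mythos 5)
Your proof is essentially correct and reaches the same conclusions, but at several points it takes a noticeably different route from the paper's own argument, and there is one spot where you assert rather than prove something that needs a citation or argument.

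\textbf{Where you diverge from the paper, in ways that genuinely add up to a different proof.}
For the preliminary fact $[F(\pi):F]=r$, you argue via $v_\fp\bigl(P_{\psi,\fp}(0)\bigr)=1$ and $P_{\psi,\fp}=\mu_\pi^{\,r/e}$, whereas the paper (Proposition \ref{propImR}) uses Yu's statement that $\mathrm{N}_{F(\pi)/F}(\pi)$ generates $\fp^{[F(\pi):F]/r}$; these are two sides of the same coin, but yours is phrased self-containedly through the determinant of the Tate module. For (a), you bound the number of invariant factors by exhibiting the splitting $E=A\oplus L$ along the saturated sublattice $A\cdot 1$, whereas the paper applies elementary divisors directly to $A[\pi]\subset E$ (getting $r$ invariant factors) and then discards $b_0=1$ using $1\in A[\pi]$. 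For (c), you work entirely inside $\F_\fp\{\tau\}$: you characterize scalar Frobenius by right-divisibility of $\pi-c$ by $\bar\psi_m$ and verify by hand that the cofactor lies in $E$, arriving at $\pi\in A+mE$. The paper instead invokes the $E$-module isomorphism $\psi[m]\cong E/mE$ and tensors the short exact sequence $0\to A[\pi]\to E\to\bigoplus A/b_iA\to 0$ with $A/mA$. Your version is more hands-on and avoids the module-theoretic reformulation; both are sound.

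\textbf{The one real gap.}
Your preliminary step proves that $A[\pi]$ has $A$-rank $r$, but your assertion that $E$ has ``the same rank as $A[\pi]$'' is not established by anything you write. You need to know that $E\otimes_AF=F(\pi)$, i.e.\ that $E$ is commutative of $A$-rank exactly $r$, before the elementary-divisors machinery (and hence every one of (a), (b), (c)) even applies. The paper does not take this for granted either: in $\S$\ref{ss2.1} it derives $E_{\psi,\fp}\otimes_AF=F(\pi_\fp(\psi))$ from Yu's Theorem 1, which says that $\End_{\F_\fp}(\psi\otimes\F_\fp)\otimes_AF$ is a central division algebra over $\widetilde F=F(\pi)$ of dimension $t^2$ with $t=r/[\widetilde F:F]$; once you know $[\widetilde F:F]=r$, $t=1$ and the algebra collapses to $F(\pi)$. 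Your ``$E$ has no zero divisors'' observation gives that $E\otimes_AF$ is a division $F$-algebra containing $F(\pi)$, but by itself gives no upper bound on its dimension; a priori it could have dimension up to $r^2$. So you should either cite Yu's theorem at this point, as the paper does, or supply an independent bound such as $\dim_F(E\otimes_AF)\le r$. Once that is in place, the remainder of your argument is complete.
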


Comparing (a) and (b) of Theorem \ref{thmMain4} with (b) and (c) of Theorem \ref{global-artin}, we 
see that the $r-1$ invariants $b_{\fp, 1}(\psi), \dots, b_{\fp, r-1}(\psi)$ generalize $b_\fp(\psi)$ to the rank-$r$ case. 
Although Theorem \ref{thmMain4} does not provide an explicit matrix for the Frobenius at $\fp$, part (c) 
of the theorem can be interpreted as a generalization of Corollary \ref{split-compl}. 
In the rank-$2$ case, $b_\fp(\psi)$ controls both $\End_{\F_\fp}(\psi\otimes\F_\fp)/A[\pi_\fp(\psi)]$  
and the splitting behavior of $\fp$ in division fields. 
In higher ranks, $b_{\fp, r-1}(\psi)$ controls the difference between the endomorphism rings, 
whereas $b_{\fp, 1}(\psi)$ controls the splitting of $\fp$. (Indeed, since all $b_{\fp, i}(\psi)$ divide $b_{\fp, r-1}(\psi)$, we have 
$\End_{\F_\fp}(\psi\otimes\F_\fp) \cong A[\pi_\fp(\psi)]$ if and only if $b_{\fp, r-1}(\psi)=1$.) 

An interesting arithmetic application of Theorems \ref{global-artin} and \ref{thmMain4} is a ``reciprocity law''  
for splitting of certain non-solvable polynomials in the style of Klein's  
approach to non-solvable quintics using elliptic curves (which itself 
is a generalization of a theorem of Gauss that the polynomial 
$x^3 - 2 \in \Z[x]$ splits completely modulo a rational prime $p \geq 5$ if and only 
if $p = \alpha^2 + 27 \beta^2$ for some integers $\alpha, \beta$). 
To introduce this class of polynomials, assume $A=\F_q[T]$ and let $\psi : A \to F\{\tau\}$ be a Drinfeld $A$-module of rank $r$ defined by
\begin{equation}\label{psiT}
\psi_T = T + g_1 \tau + g_2 \tau^2+\cdots + g_r \tau^r.
\end{equation}
Consider the polynomial
\begin{equation}\label{AbTri}
f_{\psi}(x) := T + g_1 x +  \cdots + g_r x^{(q^r-1)/(q-1)} \in F[x]
\end{equation}
obtained from $\psi_T(x)$ via the relation
$$
\psi_T(x) = x f_{\psi}\left(x^{q-1}\right).
$$

\begin{theorem}\label{abhyankar-thm} Assume $r\geq 2$ is coprime to $q$. 
\begin{enumerate}
\item[(a)] $f_{\psi}$ splits completely modulo 
$\fp\in \cP_\psi$ only if $T^2$ divides the discriminant of $P_{\psi, \fp}$. When $r=2$, this can be explicitly stated as 
$f_{\psi}$ splits completely modulo $\fp$ only if $p=u\alpha^2+T^2\beta$ for some $\alpha, \beta\in A$ and $u\in \F_q^\times$, 
where $p$ is the monic generator of $\fp$. 
\item[(b)]
Suppose $q\geq 5$, $r=2$, and $f_\psi(x)=gx^{q+1}+x+T$. If $g\in \F_q^\times$ 
or $g = h^{q-1}$ for some non-constant $h \in A$ not divisible by any prime of degree $1$ except possibly $T$, then 
the Galois group of $f_{\psi}$ over $F$ is isomorphic to $\PGL_2(\F_q)$,  and, in particular,  
is non-solvable.
\item[(c)] If $f_\psi(x)=x^{(q^r-1)/(q-1)}+\alpha Tx+T$, where $\alpha\in \F_q^\times$,  
then the Galois group of $f_{\psi}$ over $F$ is isomorphic to $\PGL_r(\F_q)$. 
\end{enumerate}
Under the assumptions in (b) or (c), the set of primes $\{\fp: \ b_{\fp, 1}(\psi)\equiv 0\ (\mod T)\}$
has Dirichlet density $\#\PGL_r(\F_q)^{-1}$.
\end{theorem}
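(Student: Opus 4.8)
The plan is to obtain part (a) and the Dirichlet density assertion as direct consequences of Theorem~\ref{thmMain4} together with the Chebotarev Density Theorem, and to prove (b) and (c) by identifying the Galois group of $f_\psi$ with the image of the mod-$T$ Galois representation and then forcing that image to be all of $\PGL_r(\F_q)$.

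\textbf{Part (a) and the density statement.} The relation $\psi_T(x)=x\,f_\psi(x^{q-1})$ identifies the $(q^r-1)/(q-1)$ roots of $f_\psi$ with the lines of $\psi[T]\cong\F_q^{\,r}$ (for nonzero $\zeta_1,\zeta_2\in\psi[T]$ one has $\zeta_1^{q-1}=\zeta_2^{q-1}$ iff $\zeta_1,\zeta_2$ span the same line). Hence for $\fp\in\cP_\psi$ with $\fp\neq(T)$, $f_\psi$ splits completely modulo $\fp$ iff $\fp$ splits completely in the splitting field of $f_\psi$, which is exactly the field $J_T$ of Theorem~\ref{thmMain4}(c) with $m=T$: indeed $\bar\rho_{\psi,T}(G_F)$ permutes the $q-1$ points on each line as a block, so the splitting field of $f_\psi$ is the fixed field of $\bar\rho_{\psi,T}(G_F)\cap Z(\GL_r(\F_q))$. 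By Theorem~\ref{thmMain4}(c) this is equivalent to $T\mid b_{\fp,1}(\psi)$, and since $b_{\fp,1}(\psi)$ divides every $b_{\fp,i}(\psi)$ and $\gcd(r,q)=1$, Theorem~\ref{thmMain4}(b) gives $T^{2(r-1)}\mid(b_{\fp,1}(\psi)\cdots b_{\fp,r-1}(\psi))^2$, which divides $\disc(P_{\psi,\fp})$; in particular $T^2\mid\disc(P_{\psi,\fp})$. When $r=2$ we have $\disc(P_{\psi,\fp})=a_\fp(\psi)^2-4u_\fp(\psi)p$ and $q$ is odd, so $T^2\mid a_\fp(\psi)^2-4u_\fp(\psi)p$ forces $p=u\alpha^2+T^2\beta$ with $u=(4u_\fp(\psi))^{-1}\in\F_q^\times$, $\alpha=a_\fp(\psi)$ and $\beta\in A$. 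For the density: granting (b) or (c), $J_T/F$ is Galois (its group is $\bar\rho_{\psi,T}(G_F)/(\bar\rho_{\psi,T}(G_F)\cap Z(\GL_r(\F_q)))$, the image $\bar G$ of $\bar\rho_{\psi,T}$ in $\PGL_r(\F_q)$), and $\bar G=\PGL_r(\F_q)$ gives $[J_T:F]=\#\PGL_r(\F_q)$; the Chebotarev Density Theorem over $F$ then shows that the set of $\fp$ splitting completely in $J_T$ — by Theorem~\ref{thmMain4}(c) this is $\{\fp:\ T\mid b_{\fp,1}(\psi)\}$ — has Dirichlet density $1/[J_T:F]=\#\PGL_r(\F_q)^{-1}$.

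\textbf{Parts (b) and (c).} By the discussion above, $\Gal(f_\psi/F)$ is canonically the image $\bar G\le\PGL_r(\F_q)$ of $\bar\rho_{\psi,T}$, acting on $\P^{r-1}(\F_q)=\P(\psi[T])$; we must show $\bar G=\PGL_r(\F_q)$. Solving $f_\psi(x)=0$ for $T$ exhibits the root field as a rational function field over $\F_q$: in case (b), $T=-g\,x^{q+1}-x$, and in case (c), $T=-y^{(q^r-1)/(q-1)}/(\alpha y+1)$; in each case $[\F_q(\mathrm{root}):\F_q(T)]=\deg f_\psi$, so $f_\psi$ is irreducible and $\bar G$ is transitive on $\P^{r-1}(\F_q)$. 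Next we read off distinguished elements of $\bar G$ from inertia. This cover is tamely and totally ramified over exactly one place of $F$ — over $\infty$ in (b), with ramification index $q+1$; over $T=0$ in (c), with ramification index $(q^r-1)/(q-1)\equiv1\pmod p$ — so $\bar G$ contains an element acting as a single $\bigl((q^r-1)/(q-1)\bigr)$-cycle on $\P^{r-1}(\F_q)$, necessarily the image of a generator of a Singer torus of $\GL_r(\F_q)$. It is wildly ramified over a second place (over $T=0$ in (b), over $\infty$ in (c)), whose inertia contributes a nontrivial $p$-element; a computation of the different via Riemann--Hurwitz pins down the cycle type of this $p$-element and shows $\bar G$ contains a transvection of $\PGL_r(\F_q)$ (an element fixing a hyperplane of $\P^{r-1}(\F_q)$ pointwise). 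Here the hypotheses on $g$ in (b) — $g\in\F_q^\times$, or $g=h^{q-1}$ with $h$ non-constant and not divisible by any degree-$1$ prime of $A$ except possibly $T$ — and the precise shape of the trinomial in (c) serve to rule out ramification at the remaining degree-$1$ primes of $A$; this prevents $\bar G$ from collapsing and forces the constant field of the splitting field to be $\F_q$, so $\bar G$ cannot lie in a subfield subgroup $\PGL_r(\F_{q'})$ or $\PSL_r(\F_{q'})$. Finally, a subgroup of $\PGL_r(\F_q)$ that is transitive on $\P^{r-1}(\F_q)$, contains the image of a Singer generator, and contains a transvection must equal $\PGL_r(\F_q)$: for $r=2$ this is Dickson's classification of subgroups of $\PGL_2(\F_q)$ (the transvection excludes the normalizers of tori, and, since $q\ge5$, together with transitivity it excludes $A_4$, $S_4$, $A_5$, while the order-$(q+1)$ Singer element excludes the proper subfield subgroups); for general $r$ it follows from the classification of irreducible subgroups generated by transvections together with the presence of a Singer element, and in both cases it recovers Abhyankar's theorems on these ``nice'' trinomials.

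\textbf{Main obstacle.} The crux is the exact identification $\bar G=\PGL_r(\F_q)$ in (c): eliminating the subfield subgroups $\PGL_r(\F_{q'})$, $\PSL_r(\F_{q'})$ and every other maximal subgroup of $\PGL_r(\F_q)$ that could contain a Singer element and a transvection, which demands precise control of ramification (whence the roles of $\gcd(r,q)=1$ and of the special coefficients) and the coincidence of the geometric and arithmetic monodromy groups. By contrast, part (a) and the density assertion are essentially formal once Theorem~\ref{thmMain4} and Chebotarev are in hand.
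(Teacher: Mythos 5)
Your treatment of part (a) and the Dirichlet density assertion matches the paper: identify the root set of $f_\psi$ with $\P^{r-1}(\F_q)=\P(\psi[T])$, note that the Galois action factors through $\hat\rho_{\psi,T}$, invoke Theorem~\ref{thmMain4}(c) to translate ``splits completely mod $\fp$'' into $T\mid b_{\fp,1}(\psi)$, pick up $T^2\mid\disc(P_{\psi,\fp})$ from Theorem~\ref{thmMain4}(b), and apply Chebotarev once (b) or (c) is known. That part is correct and essentially the paper's argument.

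For (b) and (c) you take a genuinely different route. The paper proves (b) by the Serre-style arithmetic method: it evaluates $\det\bar\rho_{\psi,T}(\Frob_\fp)$ and $\tr\bar\rho_{\psi,T}(\Frob_\fp)$ via Proposition~\ref{FrobTrDet} at all degree-one primes $\fp=(T-c)$, $c\in\F_q^\times$, derives that the resulting quantities $d_\fp(\psi)$ and $t_\fp(\psi)$ sweep through enough of $\F_q$, and then eliminates each case of Dickson's theorem. For (c) the paper does not give a proof at all; it cites Abhyankar's Theorem 1.1 in \cite{Ab1}. You instead propose a geometric/ramification-theoretic argument---Singer element from tame total ramification at one place, transvection from wild inertia at another, then a group-theoretic classification. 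This is much closer in spirit to Abhyankar's own proof than to the paper's. It buys a uniform treatment of all ranks $r$, but it trades Proposition~\ref{FrobTrDet} (which makes the paper's computation effortless at degree-one primes) for a Riemann--Hurwitz/different computation, and it trades Dickson for a much harder classification theorem in the general-rank case.

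There are, however, two genuine gaps. First, in (b) your ramification analysis (solving $T=-gx^{q+1}-x$, ramification at $\infty$ of index $q+1$, wild ramification at $T=0$) only applies literally when $g\in\F_q^\times$; when $g=h^{q-1}$ with $h$ non-constant, a root field of $f_\psi$ is not $\F_q(x)$ over $\F_q(T)$ (the relation is not linear in $T$), so the branch locus and ramification indices are different and your Singer-plus-transvection picture does not transfer without a separate argument. You gesture at this (``the hypotheses on $g$ ... serve to rule out ramification at the remaining degree-$1$ primes'') but the claimed consequence --- that ruling out ramification at degree-one primes forces the constant field to stay $\F_q$ --- is not a statement about degree-one ramification at all, and you give no argument. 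The paper's computation, by contrast, works uniformly for both shapes of $g$ precisely because it only ever looks at Frobenii at the good degree-one primes. Second, for (c) the final step --- that a subgroup of $\PGL_r(\F_q)$ transitive on $\P^{r-1}(\F_q)$ and containing both a Singer generator and a transvection must be all of $\PGL_r(\F_q)$ --- is the entire content of Abhyankar's result; invoking ``the classification of irreducible subgroups generated by transvections together with the presence of a Singer element'' (i.e.\ McLaughlin/Kantor-type theorems) without verifying the hypotheses or ruling out the exceptional groups that those theorems allow is not a proof, it is a restatement of what must be shown. The paper sidesteps this by citing Abhyankar rather than reproving him.
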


Polynomials similar to $f_{\psi}(x)$ in (b) and (c) were extensively studied by Abhyankar 
in connection with the problem of resolution of singularities in positive characteristic (cf. \cite {Ab1}, \cite{Ab2});    
for that reason, we call them \textbf{Abhyankar trinomials}. In fact, the claim in part (c) of Theorem \ref{abhyankar-thm}
is a special case of Theorem 1.1 in \cite {Ab1}. The argument in \cite {Ab1} is somewhat hard to follow, 
mostly due to the generality Abhyankar aims for, but also because of frequent references to his other papers. 
For that reason, we give a proof of (b) by adapting Serre's methods 
for elliptic curves \cite{Serre} to Drinfeld modules. 


In the above results,  the invariants 
$a_{\mathfrak{p}}(\psi), b_{\mathfrak{p}}(\psi), \delta_{\mathfrak{p}}(\psi)$ associated to $\psi$
play an essential role.
The first one, ``the Frobenius trace'', has been the subject of several studies in relation to the 
Sato-Tate and Lang-Trotter Conjectures for Drinfeld modules 
(cf. \cite{Bro}, \cite{CoDa}, \cite{Da1}, \cite{Da2}, \cite{GekelerTAMS}, \cite{HsYu}, 
\cite{Po}, \cite{Yu2}, \cite{Zy}).  In this paper we study the second invariant, $b_{\mathfrak{p}}(\psi)$,
and prove:

\begin{theorem}\label{bp-one} Let the setting and notation be as in Theorem \ref{global-artin}. 
\begin{enumerate}
\item[(a)]
If $\End_{\overline{F}}(\psi) = A$, then, for $x \in \N$ going to infinity, we have the asymptotic formula
\begin{equation}\label{bp-asymptotic-nonCM}
\#\left\{
\mathfrak{p} \in {\mathcal{P}}_{\psi}:
\deg \mathfrak{p} = x,
\End_{\F_{\mathfrak{p}}}(\psi \otimes \F_{\mathfrak{p}})
=
A[\pi_{ \mathfrak{p}}(\psi)]
\right\}
\sim
\ds\sum_{m \in A \atop{m \; \text{monic}}}
\frac{\mu_A(m) c_{J_m}(x)}{[J_m : F]} 
\cdot
\frac{q^x}{x},
\end{equation}
where $\mu_A(\cdot)$ denotes the M\"{o}bius function on $A$, 
$J_m$ is the subfield of $F(\psi[m])$ fixed by the scalars, 
$c_{J_m} := [J_m \cap \overline{\F}_q : \F_q]$,
and
 \begin{equation*} 
  c_{J_m}(x)
  :=
  \left\{ \begin{array}{cc}
 c_{J_m} & \textrm{if } c_{J_m} | x, \\
  0  &    \textrm{otherwise}.     \\
\end{array}
 \right. \end{equation*}
 Moreover, the Dirichlet density of the set
$
\left\{
\mathfrak{p} \in {\mathcal{P}}_{\psi}:
\End_{\F_{\mathfrak{p}}}(\psi \otimes \F_{\mathfrak{p}})
=
A[\pi_{ \mathfrak{p}}(\psi)]
\right\}
$
exists and equals
$
\ds\sum_{m \in A \atop{m \; \text{monic} }  }
\frac{\mu_A(m)}{[J_m : F]}.
$
\item[(b)]
If $\End_{\overline{F}}(\psi)$ is the integral closure of $A$ in a quadratic imaginary 
extension $K$ of $F$, 
then, for $x \in \N$ going to infinity, we have the asymptotic formula
\begin{equation}\label{bp-asymptotic-CM}
\#\left\{
\mathfrak{p} \in {\mathcal{P}}_{\psi}:
\deg \mathfrak{p} = x,
\End_{\F_{\mathfrak{p}}}(\psi \otimes \F_{\mathfrak{p}})
=
A[\pi_{ \mathfrak{p}}(\psi)]
\right\}
\sim
\frac{c_K(x)}{2}
\cdot
\frac{q^x}{x},
\end{equation}
where 
$c_K := \left[K \cap \overline{\F}_q : \F_q\right]$
and
 \begin{equation*} 
  c_{K}(x)
  :=
  \left\{ \begin{array}{cc}
 c_{K} & \textrm{if } c_{K} | x, \\
  0  &    \textrm{otherwise}.     \\
\end{array}
 \right. \end{equation*}
 Moreover, the Dirichlet density of the set 
 $
\left\{
\mathfrak{p} \in {\mathcal{P}}_{\psi}:
\End_{\F_{\mathfrak{p}}}(\psi \otimes \F_{\mathfrak{p}})
=
A[\pi_{ \mathfrak{p}}(\psi)]
\right\}
$
exists and equals
$
\frac{1}{2}.
$
\end{enumerate}
\end{theorem}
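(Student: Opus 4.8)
The starting point is to convert the identity $\End_{\F_\fp}(\psi\otimes\F_\fp)=A[\pi_\fp(\psi)]$ into a splitting condition in a family of fields. By Theorem~\ref{global-artin}(b) one has $\End_{\F_\fp}(\psi\otimes\F_\fp)/A[\pi_\fp(\psi)]\cong_A A/b_\fp(\psi)A$, so, $b_\fp(\psi)$ being monic, the equality $\End_{\F_\fp}(\psi\otimes\F_\fp)=A[\pi_\fp(\psi)]$ holds if and only if $b_\fp(\psi)=1$, i.e. if and only if no prime of $A$ divides $b_\fp(\psi)$; and by Theorem~\ref{thmMain4}(c), applied with $r=2$ so that $b_{\fp,1}(\psi)=b_\fp(\psi)$, a monic $m\in A$ divides $b_\fp(\psi)$ precisely when $\fp$ splits completely in $J_m$. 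Moreover, the Riemann hypothesis for Drinfeld modules gives $\deg a_\fp(\psi)\le\tfrac12\deg\fp$, whence, from $a_\fp(\psi)^2-4u_\fp(\psi)p=b_\fp(\psi)^2\delta_\fp(\psi)$, the a priori bound $\deg b_\fp(\psi)\le\tfrac12\deg\fp$; so for a prime of degree $x$ only moduli $m$ with $\deg m\le x/2$ can occur.

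In case (a), where $\End_{\overline F}(\psi)=A$, I would run a sieve. Write $\pi_\psi(x)$ for the quantity on the left of \eqref{bp-asymptotic-nonCM} and, for monic $m$, set $N_m(x):=\#\{\fp\in\cP_\psi:\deg\fp=x,\ \fp\text{ splits completely in }J_m\}$. Sieving over the radical of $b_\fp(\psi)$ gives the exact identity $\pi_\psi(x)=\sum_{m\ \mathrm{monic}}\mu_A(m)N_m(x)$, a finite sum by the preceding paragraph. Each $N_m(x)$ is the count of degree-$x$ primes with trivial Frobenius in $\Gal(J_m/F)$, so the effective Chebotarev density theorem over function fields yields
\[
N_m(x)=\frac{c_{J_m}(x)}{[J_m:F]}\cdot\frac{q^x}{x}+E_m(x),\qquad |E_m(x)|\ll \frac{g_{J_m}\,q^{x/2}}{x},
\]
where $g_{J_m}\ll[J_m:F]\deg m$ by Riemann--Hurwitz. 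Fixing a cutoff $y_0=y_0(x)\to\infty$ with $y_0=o(x)$, I would split $\sum_m$ at $\deg m=y_0$. For $\deg m\le y_0$ the main terms give $\frac{q^x}{x}\sum_{\deg m\le y_0}\frac{\mu_A(m)c_{J_m}(x)}{[J_m:F]}$, which may be completed to the full sum over all monic $m$ at negligible cost: by the adelic open image theorem for Drinfeld modules with trivial endomorphism ring (Pink and R\"utsche), $[J_m:F]\gg|m|^{\,r^2-1}=|m|^{3}$ for every $m$, while $c_{J_m}$ stays bounded, so $\sum_{\deg m>y_0}c_{J_m}/[J_m:F]\ll\sum_{d>y_0}q^{-2d}\to0$. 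The errors for $\deg m\le y_0$ sum to $\ll\frac{q^{x/2}}{x}\sum_{\deg m\le y_0}g_{J_m}\ll\frac{q^{x/2}}{x}\,q^{O(y_0)}=o(q^x/x)$ once $y_0$ grows slowly enough. This gives \eqref{bp-asymptotic-nonCM}; the existence and value of the Dirichlet density then follow (it can also be obtained more directly from the convergent series $\sum_m\mu_A(m)/[J_m:F]$ together with a Tauberian argument for the $L$-functions of the $J_m$).

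In case (b), where $\End_{\overline F}(\psi)=\cO_K$, the same sieve is \emph{formally} available but produces the wrong answer: here the image of $\bar\rho_{\psi,m}$ lies in the normalizer of a Cartan subgroup, so $[J_m:F]\asymp|m|$ and $\sum_m\mu_A(m)/[J_m:F]$ converges (conditionally) to $0$, not to $\tfrac12$ --- reflecting that the events ``$\ell$ divides $b_\fp(\psi)$'' are far from independent. Instead I would argue through the theory of complex multiplication for Drinfeld modules. Over the Hilbert class field of $\cO_K$ the module $\psi$ has its full CM, and the reduction $\psi\otimes\F_\fp$ is ordinary or supersingular according to the splitting type of $\fp$ in $K$; in each case one determines $\End_{\F_\fp}(\psi\otimes\F_\fp)$ relative to $A[\pi_\fp(\psi)]$ using Theorem~\ref{global-artin}(c). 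The outcome is that, outside a finite set of primes, $b_\fp(\psi)=1$ is equivalent to a Chebotarev condition for the quadratic extension $K/F$; the effective Chebotarev theorem for $K/F$ then produces a set of Dirichlet density $\tfrac1{[K:F]}=\tfrac12$, and the requirement imposed by the constant field $\F_{q^{c_K}}\subseteq K$ on $\deg\fp$ is exactly what the factor $c_K(x)$ records, yielding \eqref{bp-asymptotic-CM}.

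The main obstacle in (a) is the uniform control of the contribution of moduli $m$ with $y_0<\deg m\le\tfrac12 x$: there $[J_m:F]$ is too large for the Chebotarev error term $E_m(x)$ to be of any use, so one must bound $\sum_{y_0<\deg m\le x/2}\mu_A(m)N_m(x)$ by hand. I would do this by interchanging summation, $\big|\sum\big|\le\sum_{\deg\fp=x}\#\{m\mid b_\fp(\psi):\deg m>y_0\}$, which is supported on the primes with $\deg b_\fp(\psi)>y_0$, and then bounding the number of such primes --- using that $b_\fp(\psi)^2$ divides the polynomial $a_\fp(\psi)^2-4u_\fp(\psi)p$ of degree $\le x$ together with the Weil bound on $a_\fp(\psi)$ --- to conclude that the whole contribution is $o(q^x/x)$. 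The main obstacle in (b) is the CM-theoretic identification of $\{b_\fp(\psi)=1\}$ with a Chebotarev set for $K/F$, together with the careful treatment of the constant field $\F_{q^{c_K}}$ and of the case where the class number of $\cO_K$ exceeds $1$.
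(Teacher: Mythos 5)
Your decomposition of the count by Möbius inversion, the range restriction $\deg m\le x/2$ from the Riemann hypothesis bound on $a_\fp(\psi)$, and the completion of the main term to a convergent series via the Pink--Rütsche open image theorem all match the paper's proof; so does the ordinary/supersingular analysis in part (b) (where, more precisely, the supersingular primes are exactly the non-split primes in $K$ and all have $b_\fp(\psi)=1$, while the ordinary primes with $b_\fp(\psi)=1$ are $\O(q^{x/2})$ per degree, a density-zero rather than finite exceptional set). The genuine gap is in your treatment of the tail in part (a). You propose to bound the contribution of moduli $m$ with $y_0<\deg m\le x/2$ by the naive count of primes $\fp$ of degree $x$ with $\deg b_\fp(\psi)>y_0$, using only that $b_\fp(\psi)^2\delta_\fp(\psi)=a_\fp(\psi)^2-4u_\fp(\psi)p$ has degree $\le x$. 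Parameterizing by $(a,u,b,\delta)$ with $\deg a\le x/2$, $\deg b=j>y_0$, $\deg\delta=x-2j$, this injects into $\fp$ but only gives a bound of size $q^{x/2}\cdot\sum_{j>y_0}q^{j}\cdot q^{x-2j}\asymp q^{3x/2-y_0}$; even after the divisor bound, forcing this tail to be $o(q^x/x)$ requires $y_0>x/2$, whereas the Chebotarev error from the main range requires $y_0<x/2$. There is no admissible choice of $y_0$, so the naive counting cannot close the argument. The paper's resolution is to apply the function field Square Sieve: since $m\mid b_\fp(\psi)$ forces $g\,(a_\fp(\psi)^2-4u_\fp(\psi)p)$ to be a square in $A$ for a bounded squarefree $g$, one bounds the number of such $\fp$ via character sums $\bigl(\tfrac{\cdot}{\ell_1}\bigr)\bigl(\tfrac{\cdot}{\ell_2}\bigr)$, which are in turn estimated by effective Chebotarev in $J_{\ell_1\ell_2}/F$; this yields $S_g(\psi)\ll q^{7x/8}(x+\deg g)+q^{3x/4}x(x+\deg g)^2$ and hence $\mathcal{B}_2(\psi,x,y)\ll q^{15x/8-2y+\varepsilon x}x^3$, at which point $y=11x/24$ closes both error terms. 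Without this or a comparably powerful oscillation input your proposed bound fails, and this is the essential content of the proof of part (a).
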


Theorem \ref{global-artin} is the function field analogue of Theorem 2.1 in \cite{DuTo}. 
To prove this theorem, Duke and T\'oth use Deuring's Lifting Theorem. 
We avoid using such CM-liftings in the proof of Theorem \ref{global-artin} by 
exploiting the fact that a Drinfeld $A$-module of rank $r$ with endomorphism ring $A'$ 
can be considered as a Drinfeld $A'$-module of smaller rank. Nevertheless, 
the question of the existence of CM-liftings for Drinfeld modules is interesting. 
In this paper we prove the following analogue of Deuring's Lifting Theorem: 

\begin{theorem}\label{thmDLL}
Let $A$ be arbitrary, as at the beginning of this section. Let $k$ be a finite field with $A$-characteristic $\fp$. Let 
$\phi$ be a Drinfeld $A$-module of rank $2$ defined over $k$. Let $g\in \End_k(\phi)\backslash A$. 
Then there exist a discrete valuation field $K$ with $A$-characteristic $0$ and residue field $k$, a Drinfeld $A$-module 
$\psi$ of rank $2$ defined over $K$, and $f\in \End_K(\psi)$, such that $\phi$ 
with endomorphism $g$ is the reduction of $\psi$ with endomorphism $f$.  
\end{theorem}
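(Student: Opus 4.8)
The plan is to imitate the classical proof of Deuring's Lifting Theorem, replacing elliptic curves by Drinfeld modules and the theory of complex multiplication over $\Q$ by the analogous theory for imaginary quadratic extensions of $F$. The starting observation is that the subring $A' := A[g] \subseteq \End_k(\phi)$ is an $A$-order in the imaginary quadratic extension $K_0 := F(g)$ of $F$ (it is quadratic because $\End_k(\phi)\otimes_A F$ injects into $K_0$, and imaginary because $\infty$ does not split in $D\otimes_F F_\infty$; here one uses that $g \notin A$, so $A'$ genuinely has rank $2$ over $A$). Then $\phi$, viewed as a Drinfeld $A'$-module, has rank $1$: indeed a rank-$2$ Drinfeld $A$-module whose endomorphism ring contains an order $A'$ of rank $2$ becomes, by restriction, a Drinfeld $A'$-module of rank $2/2 = 1$ over $k$.

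Next I would produce the lift in rank $1$. Rank-$1$ Drinfeld $A'$-modules (more precisely, over the normalization, or working with the given order $A'$ directly) are classified up to isomorphism by an analogue of the ideal class group, and they all have \emph{potential good reduction} with CM, so they are defined, together with their full endomorphism rings, over finite extensions of $F$ — concretely over the narrow Hilbert class field $H$ of $A'$ (the Drinfeld–Hayes theory of rank-$1$ modules). Choosing a place $\mathfrak{P}$ of $H$ above $\mathfrak{p}$ whose residue field contains $k$ (enlarging $k$ is harmless: one may first base-change $\phi$, or alternatively descend at the end), one gets a rank-$1$ Drinfeld $A'$-module $\Psi$ over the completion $K := H_{\mathfrak{P}}$, a discrete valuation field of $A$-characteristic $0$ with residue field containing $k$, such that $\Psi \bmod \mathfrak{P}$ is a rank-$1$ Drinfeld $A'$-module over that residue field. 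The key point — this is where the CM theory does real work — is that one can arrange the reduction of $\Psi$ to be $A'$-isomorphic to $\phi$ (regarded as a rank-$1$ $A'$-module): over a finite field, rank-$1$ Drinfeld $A'$-modules with the same "Frobenius" are isogenous, and by adjusting within the isogeny class / enlarging the field one matches $\phi$ on the nose. Restriction of scalars then turns $\Psi$ into a rank-$2$ Drinfeld $A$-module $\psi$ over $K$, the inclusion $A' \hookrightarrow \End_{K}(\psi)$ supplies $f \in \End_K(\psi)$ corresponding to $g \in A'$, and the whole picture reduces mod $\mathfrak{P}$ to $(\phi, g)$ by construction.

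Two technical points need care. First, matching the reduction exactly with $\phi$ rather than merely with something isogenous: here I would use that isogenies of Drinfeld modules over local fields lift (a Drinfeld module together with a finite $A'$-submodule of its torsion can be lifted, by deformation theory / Drinfeld's original results on the representability of the moduli problem with level structure), so one can transport $\Psi$ by a lifted isogeny to make its special fibre $A'$-isomorphic to $\phi$; alternatively, enlarge $k$ so that all relevant isogenies are defined, prove the statement there, and then note the theorem as stated only requires the residue field to \emph{be} $k$ — so one must actually descend, which forces a small extra argument (or one restricts attention to the case where $k$ is already large enough, the case that matters in applications). Second, one must check rank $2$ of $\psi = \mathrm{Res}_{A'/A}\Psi$ and the $A$-characteristic-$0$ claim for $K$, both of which are formal.

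\medskip
\noindent\textbf{Main obstacle.} The crux is the exact-matching step: ensuring the special fibre of the CM-lift is $A'$-isomorphic (not just $A'$-isogenous) to the given $\phi$, while keeping the residue field equal to $k$ and not merely a finite extension of it. Everything else — the reduction $\mathrm{rank}_{A'}\phi = 1$, the existence of CM rank-$1$ lifts over $H_{\mathfrak{P}}$, and restriction of scalars producing $(\psi,f)$ — is a direct transcription of the elliptic-curve argument using Drinfeld–Hayes rank-$1$ theory in place of the theory of complex multiplication.
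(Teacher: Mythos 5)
Your overall strategy is the same one the paper uses (pass to rank one via the CM order, lift there, restrict scalars back to $A$, and then correct by a lifted isogeny), and you have correctly isolated the delicate step; however, two of your specific choices open genuine gaps that the paper's proof is designed to close.

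First, you work throughout with the order $A'=A[g]$, but nothing forces the conductor of $A[g]$ in the integral closure $A_{F(g)}$ to be coprime to $\fp$. Your matching step requires lifting an isogeny between $\phi$ (as a rank‑one $A'$-module) and the reduction of $\Psi$; as you yourself note, this is done by lifting the kernel, and the Lehmkuhl-type lifting results apply to \emph{\'etale} finite submodule schemes, i.e.\ to kernels of order coprime to $\fp$. If the conductor of $A[g]$ has a factor at $\fp$, the connecting isogeny between $A[g]$- and $A_{F(g)}$-structures can have a local–local kernel, and the lift may not exist. The paper circumvents exactly this problem by replacing $A[g]$ with the potentially larger order $\cA=\End_k(\phi)\cap F(g)$ (which still contains $g$, so a CM-lift carrying $\cA$ automatically carries a lift $f$ of $g$) and then proving the key input that in rank $2$ this order $\cA$ is \emph{always} maximal at $\fp$: in the supersingular case because $\cA_\fP$ is a maximal local order and $\fP$ is the unique place over $\fp$, and in the ordinary case by a discriminant computation showing $A[\pi]$, hence $\cA$, is already maximal at $\fp$. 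That "goodness" lemma is what makes the connecting kernel \'etale and the whole argument go through; without replacing $A[g]$ by $\cA$ your proof does not close.

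Second, the Drinfeld–Hayes route (construct $\Psi$ over the Hilbert class field $H$ and reduce at a place $\fP\mid\fp$) only gives a lift whose residue field is the residue field of $\fP$, which can be a proper extension of $k$; your own parenthetical remarks ("enlarging $k$ is harmless", "descend at the end", "restrict to the case where $k$ is large enough") concede this, but the theorem as stated requires the residue field to be exactly $k$, and a descent argument is not supplied. The paper sidesteps this entirely: rather than reducing a globally-constructed CM module, it starts from a rank-one $A_L$-module over $k$ that is isogenous to $\phi$ by an \'etale isogeny (Goss/Hayes), \emph{deforms} that module over a discrete valuation ring with residue field $k$ using Lehmkuhl's deformation theory for Drinfeld modules, and then lifts the \'etale kernel of the isogeny to recover a lift of $\phi$ itself. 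This produces the DVR and the lift simultaneously, with residue field $k$ by construction, and no base change or descent is needed. Incorporating these two changes—use $\cA=\End_k(\phi)\cap F(g)$ together with the $r=2$ goodness lemma, and deform from $k$ rather than reduce from $H$—would turn your outline into a correct proof, essentially the paper's.
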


In Section \ref{CMLifting}, we prove a general result about CM-liftings of Drinfeld modules of 
arbitrary rank from which Theorem \ref{thmDLL} follows. 
 The proofs of our main results are based on both algebraic and analytic techniques. 
 In particular, the proof of Theorem \ref{bp-one} is based on sieve methods such as the less standard Square Sieve, and, implicitly,  
 also on effective versions of the Chebotarev Density Theorem.

\section{Global description of the Frobenius: Proof of Theorem \ref{global-artin} and Theorem \ref{thmMain4}}\label{sProof1} 
\subsection{Preliminaries}\label{ss2.1}

Throughout this section, we assume that $A = \F_q[T]$. 
In addition to the notation in the introduction, we use the following: 
\begin{itemize}
\item
$A^{(1)}$ denotes the set of monic polynomials in $A$.
\item
For $0\neq a\in A$, let $\deg(a)$ be the degree of $a$ as a polynomial in $T$ and put $\deg(0):=-\infty$. 
\item
For $f = \frac{a}{b}\in F=\F_q(T)$, let $\deg(f) := \deg(a)-\deg(b)$. 
This  defines a valuation on $F$ with normalized norm $|f|_\infty:=q^{\deg(f)}$; 
the corresponding place of $F$ is $\infty$. 
\item
For a prime ideal $0\neq \fp  \lhd A$, let $F_\fp$ denote the completion of $F$ at $\fp$, 
$\F_{\mathfrak{p}}:=A/\fp$, 
$\deg(\fp):=[\F_\fp:\F_q]$.
\end{itemize}

Let $\psi : A \to F\{\tau\}$ be a Drinfeld $A$-module of rank $r$. 
Let $\fl = \ell A \lhd A$ be a prime ideal with generator $\ell \in A$. For an integer $n \geq 1$, we  define $\psi[\fl^n]:=\psi[\ell^n]$.   
(It is easy to see that this does not depend on the choice of $\ell$.) For $n'\geq n$ we have the inclusion
$\psi[\fl^n]\subseteq \psi[\fl^{n'}]$, which is compatible with the $A$-module structure and the action of $G_F$.
Hence 
$$
\psi[\fl^\infty] := \underset{\underset{n}{\rightarrow}}{\lim}\ \psi[\fl^n] \cong  (F_\fl/A_\fl)^{\oplus r},
$$
where $F_\fl$ and $A_\fl$ are the completions of $F$ and $A$ at $\fl$, respectively. 
The \textbf{$\fl$-adic Tate module of $\psi$}, defined as 
$$
T_\fl(\psi) := \Hom_{A_\fl}(F_\fl/A_\fl, \psi[\fl^\infty]) \cong A_\fl^{\oplus r},
$$ 
is endowed with a continuous action of $G_F$,  giving rise to a representation
 $$
 \rho_{\psi, \fl}: G_F \to \GL_2(A_\fl)
 $$ 
whose reduction modulo $\fl$ is $\bar{\rho}_{\psi, \ell}$ of (\ref{eqResRep}).  
 
Assume now that  $\fp\neq \fl$ is a prime of good reduction of $\psi$. 
More precisely, if $\psi$ is given by (\ref{psiT}), assume $\ord_{\fp}(g_i)\geq 0$ for all $1\leq i\leq r-1$ and $\ord_{\fp}(g_r)= 0$. 
Then, according to Theorem 1 in \cite{Ta}, the representation $\rho_{\psi, \fl}$ 
is unramified at $\fp$, and so, up to conjugation, there is a well-defined matrix 
$$
\rho_{\psi, \fl}(\Frob_\fp) \in \GL_r (A_{\fl})
$$
whose characteristic polynomial we denote by $P_{\psi, \mathfrak{p}}(x)$. 
The polynomial $P_{\psi, \fp}(x)$ has coefficients in $A$, does not depend on the choice of $\fl$, 
and is equal to the characteristic polynomial of the Frobenius endomorphism of the reduction 
$\psi\otimes \F_\fp$ acting on $T_\fl(\psi\otimes \F_\fp)$; see \cite[pp. 478-479]{Ta}. 
In particular, the roots of $P_{\psi, \fp}(x)$ are integral over $A$. Let $\pi_{\fp}(\psi)$ denote one of those roots.  

\begin{proposition}\label{propImR}
The field extension $F(\pi_{\fp}(\psi))/F$ is imaginary of degree $r$. 
\end{proposition}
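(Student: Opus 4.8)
The plan is to reduce both assertions to the single claim that the $\fp$-Weil polynomial $P_{\psi,\fp}(x)$ is irreducible over $F$. Write $\Fr=\tau^{\deg\fp}$ for the Frobenius endomorphism of the reduction $\psi\otimes\F_\fp$; as recalled just above, $P_{\psi,\fp}$ is its characteristic polynomial on $T_\fl(\psi\otimes\F_\fp)$, so $P_{\psi,\fp}(\Fr)=0$ and the minimal polynomial of $\Fr$ over $F$ divides $P_{\psi,\fp}$. Once $P_{\psi,\fp}$ is shown to be irreducible, that minimal polynomial must equal $P_{\psi,\fp}$; hence $F(\pi_\fp(\psi))=F[x]/(P_{\psi,\fp})$ (independently of the chosen root) is a field with $[F(\pi_\fp(\psi)):F]=\deg P_{\psi,\fp}=r$, canonically isomorphic to the subfield $F[\Fr]$ of $D:=\End_{\F_\fp}(\psi\otimes\F_\fp)\otimes_A F$. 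This is the exact analogue of the elementary fact that the Frobenius of an elliptic curve over $\Q$ with good reduction at a prime $p$ generates an imaginary quadratic field.

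To prove irreducibility I would first determine $P_{\psi,\fp}(0)$ up to a constant. First, $\fp\mid P_{\psi,\fp}(0)$: since $\psi\otimes\F_\fp$ has $A$-characteristic $\fp$ and $\Fr=\tau^{\deg\fp}$ is divisible by $\tau$ in $\F_\fp\{\tau\}$, every term other than the constant one in the identity $P_{\psi,\fp}(\Fr)=0$ has zero constant coefficient, which forces $P_{\psi,\fp}(0)$ to vanish modulo $\fp$. Next, for every prime $\fl\neq\fp$ the endomorphism $\Fr$ acts invertibly on $T_\fl(\psi\otimes\F_\fp)$ --- its kernel as a group scheme is infinitesimal, whereas the $\fl$-torsion is étale (cf. \cite{Ta}) --- so $\fl\nmid P_{\psi,\fp}(0)$. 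As $A$ is a UFD, it follows that $P_{\psi,\fp}(0)=c\,p^{k}$ for some $c\in\F_q^\times$ and $k\geq1$. Now I invoke the Riemann Hypothesis for Drinfeld modules over finite fields (\cite{Dr1}): every root of $P_{\psi,\fp}$ has $\infty$-adic absolute value $q^{\deg\fp/r}$; multiplying over the $r$ roots gives $|P_{\psi,\fp}(0)|_\infty=q^{\deg\fp}$, whence $k=1$ and $P_{\psi,\fp}(0)=c\,p$ (for $r=2$ this already establishes the factorization $P_{\psi,\fp}(0)=u_\fp(\psi)\,p$ of Theorem \ref{global-artin}(a)). Finally, suppose $P_{\psi,\fp}=gh$ with $g,h\in A[x]$ monic and $1\leq\deg h\leq r-1$. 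Then $g(0)h(0)=c\,p$, and, $p$ being prime in $A$, one of the two constant terms --- say $h(0)$ --- lies in $\F_q^\times$, so $|h(0)|_\infty=1$; but $h(0)$ equals, up to sign, the product of the $\deg h$ roots of $h$, each of $\infty$-adic absolute value $q^{\deg\fp/r}$, so $|h(0)|_\infty=q^{(\deg h)\deg\fp/r}>1$, a contradiction. Hence $P_{\psi,\fp}$ is irreducible over $F$, and therefore $[F(\pi_\fp(\psi)):F]=r$.

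For the imaginary assertion I would appeal to the structural fact recalled in the introduction, valid for a Drinfeld module over an arbitrary base field and in particular over $\F_\fp$: $D=\End_{\F_\fp}(\psi\otimes\F_\fp)\otimes_A F$ is a division algebra over $F$ for which $D\otimes_F F_\infty$ is again a division algebra. Since $F(\pi_\fp(\psi))\cong F[\Fr]$ is a subfield of $D$, the $F_\infty$-algebra $F(\pi_\fp(\psi))\otimes_F F_\infty$ embeds into the division algebra $D\otimes_F F_\infty$; being finite-dimensional, commutative, and without zero divisors, it must be a field, which is exactly the statement that $\infty$ has a unique place above it in $F(\pi_\fp(\psi))$, i.e. $F(\pi_\fp(\psi))/F$ is imaginary. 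The step I expect to require genuine care is the determination of $P_{\psi,\fp}(0)$ in arbitrary rank --- equivalently, the purity of the $\infty$-adic Newton polygon of $P_{\psi,\fp}$ --- which is where the Riemann Hypothesis is indispensable; for $r=2$ it is essentially free, and everything else (irreducibility via unique factorization in $A$, and the imaginary property via the division-algebra property of $D$) is formal.
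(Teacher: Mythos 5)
Your proposal is mathematically correct, but it takes a genuinely different route from the paper. The paper's argument is essentially a sequence of citations to Yu \cite{Yu}: (i) $\pi$ is a Weil number of rank $r$; (ii) by general properties of Weil numbers, $F(\pi)/F$ is imaginary of degree dividing $r$; (iii) $\mathrm{N}_{F(\pi)/F}(\pi)$ generates $\fp^{[F(\pi):F]/r}$, and since $\fp$ is prime this exponent must be a positive integer, forcing $[F(\pi):F]=r$. Your argument instead re-derives the key facts directly: you compute $P_{\psi,\fp}(0)=c\,p^{k}$ by examining $\Fr=\tau^{\deg\fp}$ locally at each prime (connected kernel over $\fp$, étale $\fl$-torsion for $\fl\neq\fp$), then invoke the Riemann Hypothesis to pin down $k=1$ and to rule out any monic proper factor of $P_{\psi,\fp}$ via a clash of $\infty$-adic absolute values, and finally obtain the imaginary property by embedding $F(\pi)\cong F[\Fr]$ into $D=\End_{\F_\fp}(\psi\otimes\F_\fp)\otimes_A F$ and using that $D\otimes_F F_\infty$ is a division algebra. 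Both proofs ultimately lean on the same deep inputs (the $\infty$-purity of the Frobenius and the local structure of the endomorphism algebra), but you spell out the argument where the paper outsources it to \cite[pp.~165--167]{Yu}; as a bonus your step (i) already reproves the factorization $P_{\psi,\fp}(0)=u_\fp(\psi)p$ of Theorem~\ref{global-artin}(a). The irreducibility-of-$P_{\psi,\fp}$ formulation is also arguably clearer, since it makes the degree assertion and the ``single orbit of Frobenius eigenvalues'' picture explicit. A small bibliographic remark: for the Riemann Hypothesis in arbitrary rank it would be safer to cite \cite{Yu} or \cite{Go} (or Gekeler for $r=2$, as the paper does for~(\ref{RH})) rather than \cite{Dr1} alone.
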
 
\begin{proof}
By the reduction properties of Drinfeld modules, $\pi:=\pi_{\fp}(\psi)$ is a Weil number of rank $r$ over $\F_\fp$; 
cf. \cite[p. 479]{Ta} and \cite[p. 165]{Yu}. 
Next, by the properties of Weil numbers, the extension $F(\pi)/F$ is imaginary of degree dividing $r$; see \cite[pp. 165-166]{Yu}.  
On the other hand, the norm $\mathrm{N}_{F(\pi)/F}(\pi)\in A$ generates the ideal $\fp^{[F(\pi):F]/r}$; cf. \cite[p. 167]{Yu}. 
Since $\fp$ is prime, we must have $[F(\pi):F]=r$. 
\end{proof}

Let $$E_{\psi, \mathfrak{p}} := \End_{\F_{\mathfrak{p}}}(\psi \otimes \F_{\mathfrak{p}}), 
\qquad \overline{E}_{\psi, \mathfrak{p}} := \End_{\overline{\F}_{\mathfrak{p}}}(\psi \otimes \F_{\mathfrak{p}}), 
$$ 
$$
{\mathcal{O}}_{\psi, \mathfrak{p}}:=\text{integral closure of $A$ in $F(\pi_{\fp}(\psi))$}. 
$$
As a consequence of \cite[Thm. 1]{Yu} and Proposition \ref{propImR}, we have $E_{\psi, \mathfrak{p}}\otimes_A F=F(\pi_{\fp}(\psi))$. 
Hence $A[\pi_{\fp}(\psi)]$ and $E_{\psi, \mathfrak{p}}$ are $A$-orders in $F(\pi_{\fp}(\psi))$, and we have the inclusions
\begin{equation}\label{rings}
A \subsetneq 
A[\pi_{\fp}(\psi)] \subseteq 
E_{\psi, \mathfrak{p}} \subseteq
{\mathcal{O}}_{\psi, \mathfrak{p}}. 
\end{equation}
It is known that $\overline{E}_{\psi, \mathfrak{p}}$ is a division algebra over $F$, and  
at the level of division algebras, we have the inclusions
\begin{equation}\label{fields}
F \subsetneq
F(\pi_{\fp}(\psi))
=
E_{\psi, \mathfrak{p}} \otimes_A F
=
{\mathcal{O}}_{\psi, \mathfrak{p}} \otimes_A F
\subseteq
\overline{E}_{\psi, \mathfrak{p}} \otimes_A F. 
\end{equation}
We say that $\mathfrak{p}$ is a {\bf{supersingular}} prime 
for $\psi$ if $\dim_F(\overline{E}_{\psi, \mathfrak{p}} \otimes_A F)=r^2$.  
We say that $\mathfrak{p}$ is an {\bf{ordinary}} prime 
for $\psi$ if $\dim_F(\overline{E}_{\psi, \mathfrak{p}} \otimes_A F)=r$.  If $r=2$, then any 
prime $\fp\in \cP_\psi$ is either ordinary or supersingular. 

When $r=2$, the coefficients of 
$$
P_{\psi, \mathfrak{p}}(x) = x^2 + a_{\fp}(\psi) x + a'_{\fp}(\psi) 
$$  
can be explicitly determined as follows.
Let $\mathrm{N}_{\F_\fp/\F_q}$ be the norm map from $\F_\fp$ to $\F_q$. Let 
$$
u_{\fp}(\psi) := (-1)^{\deg(\fp)} \mathrm{N}_{\F_\fp/\F_q}(g_2)^{-1},
$$ 
where, by abuse of notation, $g_2$ in the norm denotes the reduction of $g_2$ modulo $\fp$. 
For an integer $k\geq 1$, put $[k]:=T^{q^k}-T$, and define $s_0:=1$, $s_1:=g_1$,
$$
s_k:=-[k-1]s_{k-2}g_2^{q^{k-2}}+s_{k-1}g_1^{q^{k-1}}\quad (k\geq 2). 
$$ 

\begin{proposition}\label{FrobTrDet}
\noindent
\begin{enumerate}
\item[(i)]
The coefficient $a_{\fp}(\psi)\in A$ is  uniquely determined by
$$a_{\fp}(\psi) \equiv - u_{\fp}(\psi) s_{\deg(\fp)} (\mod \fp)$$ 
and 
\begin{equation}\label{RH}
\deg a_{\fp}(\psi) \leq \frac{ \deg(\fp)}{2}. 
\end{equation}
\item[(ii)]
The coefficient $a'_{\fp}(\psi) \in A$ is equal to $u_{\fp}(\psi) p$, where $p\in A^{(1)}$ is the monic generator of $\fp$. 
\end{enumerate}
\end{proposition}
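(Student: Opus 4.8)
The plan is to reduce $\psi$ modulo $\fp$ and work inside the twisted polynomial ring $\F_\fp\{\tau\}$, reading off $a_\fp(\psi)$ and $a'_\fp(\psi)$ from the coefficients of suitable powers of $\tau$ in the identity asserting that $\pi_\fp(\psi)$ satisfies its characteristic polynomial. Write $\bar\psi := \psi\otimes\F_\fp$, so $\bar\psi_T = \bar\Delta_0 + \bar g_1\tau + \bar g_2\tau^2$ with $\bar\Delta_0 := \gamma(T)$, and put $d := \deg(\fp)$. Since $\F_\fp = \F_{q^d}$, the element $\pi := \tau^d$ is central in $\F_\fp\{\tau\}$, hence is the Frobenius endomorphism of $\bar\psi$, and $P_{\psi,\fp}$ is its characteristic polynomial; thus in $\F_\fp\{\tau\}$,
\begin{equation*}
\tau^{2d} + \bar\psi_{a_\fp(\psi)}\,\tau^{d} + \bar\psi_{a'_\fp(\psi)} = 0.
\end{equation*}
By induction from $\psi_{T^{m}} = \psi_T\circ\psi_{T^{m-1}}$ and the rule $\tau c = c^{q}\tau$, for monic $b\in A$ of degree $m$ the element $\psi_{b}$ has $\tau$-degree $2m$, leading coefficient $g_2^{1+q^2+\cdots+q^{2(m-1)}}$, and $\tau^{0}$-coefficient $\gamma(b)$; these facts are used throughout.

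For part (ii), Proposition \ref{propImR} with $r=2$ gives that $F(\pi_\fp(\psi))/F$ is imaginary quadratic, so (by the norm computation in its proof) $a'_\fp(\psi) = \mathrm{N}_{F(\pi_\fp(\psi))/F}(\pi_\fp(\psi))$ generates $\fp$; as $A^{\times}=\F_q^{\times}$ and $p$ is monic, $a'_\fp(\psi) = u\,p$ for a unique $u\in\F_q^{\times}$ (the $T$-leading coefficient of $a'_\fp(\psi)$). Comparing the coefficients of $\tau^{2d}$ in the displayed identity, using $\bar\psi_{a'_\fp(\psi)} = u\,\bar\psi_{p}$ and $\bar g_2^{q^{d}} = \bar g_2$ to fold the exponent $1+q^2+\cdots+q^{2(d-1)}$ modulo $q^{d}-1$, one obtains $u = (-1)^{d}\,\mathrm{N}_{\F_\fp/\F_q}(g_2)^{-1} = u_\fp(\psi)$; here the contribution of $\bar\psi_{a_\fp(\psi)}\tau^{d}$ is $0$ unless $\deg a_\fp(\psi)=d/2$ (possible only for even $d$), in which boundary case one also substitutes the value of the leading coefficient of $a_\fp(\psi)$ furnished by part (i) to close the identity. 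More cleanly, one may instead invoke the rank-one theory: the top exterior power of the Anderson motive of $\psi$ is, up to an unramified twist, the motive of a rank-one Drinfeld module determined by $g_2$, whence $\det\rho_{\psi,\fl}(\Frob_\fp) = a'_\fp(\psi)$ equals, by the explicit class field theory of $F$, exactly $(-1)^{d}\mathrm{N}_{\F_\fp/\F_q}(g_2)^{-1}p$.

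For part (i), the bound $(\ref{RH})$ is the Riemann Hypothesis for Drinfeld modules over finite fields: $\pi_\fp(\psi)$ and its conjugate are Weil numbers of rank $2$ (see the proof of Proposition \ref{propImR}), so $\deg a_\fp(\psi) \le d/2 < d = \deg\fp$. Hence $a_\fp(\psi)$ is the unique element of $A$ of degree $\le d/2$ in its class modulo $\fp$, and it remains to compute $a_\fp(\psi)\bmod\fp$. Comparing the coefficients of $\tau^{d}$ in the displayed identity — $\tau^{2d}$ contributes $0$, $\bar\psi_{a_\fp(\psi)}\tau^{d}$ contributes $\gamma(a_\fp(\psi))$, and $\bar\psi_{a'_\fp(\psi)} = u_\fp(\psi)\bar\psi_{p}$ contributes $u_\fp(\psi)$ times the $\tau^{d}$-coefficient of $\bar\psi_{p}$ — gives $a_\fp(\psi) \equiv -u_\fp(\psi)\cdot(\text{the }\tau^{d}\text{-coefficient of }\bar\psi_{p})\pmod\fp$. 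It then suffices to identify that coefficient with $s_{d}\bmod\fp$, for which I would use the crystalline (de Rham) realization of $\bar\psi$: the quotient of $\F_\fp\{\tau\}$ by the left ideal generated by $\bar\psi_T - \bar\Delta_0 = g_1\tau + g_2\tau^{2}$ is a two-dimensional $\F_\fp$-space on which the $q$-semilinear Frobenius $\tau$ acts, in the basis $\{1,\tau\}$, by an explicit matrix $\Theta$; the $d$-fold iterate is the twisted product $\Theta^{(q^{0})}\cdots\Theta^{(q^{d-1})}$, and the twists produce exactly the differences $\bar\Delta_0 - \bar\Delta_0^{q^{j}} = -[j]\bmod\fp$, which is where the factors $[k-1] = T^{q^{k-1}}-T$ come from. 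Tracking the $(2,2)$-entry (equivalently the trace) of the partial products and clearing the unit $\prod_{j<k}g_2^{q^{j}}$, one checks that it satisfies, up to the sign $(-1)^{k}$, precisely the recursion defining $s_{k}$ with $s_0 = 1$, $s_1 = g_1$; specializing $k=d$ and matching $\prod_{j<d}g_2^{q^{j}}$ with $\mathrm{N}_{\F_\fp/\F_q}(g_2)$ yields the claim.

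The step I expect to be the main obstacle is this last identification in part (i): establishing the crystalline framework cleanly — or, equivalently, carrying out the raw induction on $\bar\psi_{T^{k}} = \bar\psi_T\circ\bar\psi_{T^{k-1}}$ with careful bookkeeping of every Frobenius twist — and verifying that, after clearing denominators, the relevant matrix entry collapses to the exact pattern $s_{k} = -[k-1]\,s_{k-2}\,g_2^{q^{k-2}} + s_{k-1}\,g_1^{q^{k-1}}$, with the correct sign and normalizing unit $\mathrm{N}_{\F_\fp/\F_q}(g_2)^{\pm 1}$. A secondary, purely computational nuisance is the even-$d$ boundary case $\deg a_\fp(\psi)=d/2$ in part (ii), which the exterior-power/class-field-theory argument avoids entirely.
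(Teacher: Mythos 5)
The paper does not prove this proposition; it simply cites Theorem 2.11 and Proposition 3.7 of Gekeler's \cite{GekelerTAMS}. Your proposal is an attempt to reconstruct that proof directly by equating coefficients in the twisted-polynomial identity $\tau^{2d}+\bar\psi_{a_\fp(\psi)}\tau^d+\bar\psi_{a'_\fp(\psi)}=0$ inside $\F_\fp\{\tau\}$. This is essentially Gekeler's method, so it is not an independent route but rather a reconstruction of the very argument the paper delegates. The outline is sound and you correctly identify the two pieces of real work, but both need more care than your sketch supplies.

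First, for part (ii), the direct route via the $\tau^{2d}$-coefficient genuinely breaks when $d=\deg\fp$ is even. In that case the folding $\bar g_2^{1+q^2+\cdots+q^{2(d-1)}}$ modulo $q^d-1$ does \emph{not} give $\mathrm{N}_{\F_\fp/\F_q}(\bar g_2)$ (for $d=2$ it gives $\bar g_2^2$, not $\bar g_2^{1+q}$), so the top coefficient equation $1+(\text{LC of }\bar\psi_{a_\fp}\tau^d)+u\bar g_2^{1+q^2+\cdots+q^{2(d-1)}}=0$ really does involve the leading coefficient of $a_\fp(\psi)$ when $\deg a_\fp(\psi)=d/2$. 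Your remedy — "substitute the value furnished by part (i)" — is circular, because part (i) gives $a_\fp(\psi)\equiv -u_\fp(\psi)s_d\ (\mathrm{mod}\ \fp)$, whose right-hand side presupposes $u=u_\fp(\psi)$, i.e.\ part (ii). The exterior-power/class-field-theory alternative you mention does break the circularity and should be made the primary argument for (ii), not a footnote; once (ii) is established that way, the middle-coefficient identity for (i) is unambiguous.

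Second, the heart of part (i) — that the $\tau^d$-coefficient of $\bar\psi_p$ equals $s_d\bmod\fp$ — is left as a plan rather than a proof, and it is precisely the content of Gekeler's Theorem 2.11; there is nothing gained by re-deriving it unless the derivation is actually carried out. Your "crystalline" setup also wants adjustment: the quotient of $\F_\fp\{\tau\}$ by the left ideal generated by $\bar g_1\tau+\bar g_2\tau^2$ is a $2$-dimensional $\F_\fp$-space, but it carries no residual right $A$-action (right multiplication by $\bar\psi_T$ does not preserve a \emph{left} ideal), so identifying the $d$-fold twisted product of $\Theta$ with the action of $\pi$ on the motive requires either passing to the full $\F_\fp[T]$-module $\F_\fp\{\tau\}$ of rank $2$ or a separate justification. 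In short: the shape of your argument is right and matches the cited source, but as written one part is circular and the other is the statement to be proved; the cleanest repair is to establish (ii) via the determinant/rank-one theory first and then verify the $s_k$-recursion against the $\tau$-expansion of $\bar\psi_{T^k}$ by a careful induction keeping track of every twist and the sign $(-1)^k$.
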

\begin{proof}
This follows from Theorem 2.11 and Proposition 3.7 in \cite{GekelerTAMS}. 
\end{proof}

\subsection{Proof of Theorem \ref{global-artin} and its corollaries}
We keep the notation of $\S$\ref{ss2.1}, but assume that $r=2$ and $q$ is odd.  
 Note that even though the characteristic polynomial 
of $\bar{\rho}_{\psi,a}(\Frob_\fp)$ can be computed in terms of $g_1$, $g_2$ and $\fp$,
this is not sufficient for determining the conjugacy class of $\bar{\rho}_{\psi,a}(\Frob_\fp)$, 
as this matrix is not necessarily semi-simple. For this we need an extra 
invariant $b_{\fp}(\psi)$ related to the reduction of $\psi$ at $\fp$. 
Both $A[\pi_{\fp}(\psi)]$ and $E_{\psi, \fp}$ are 
$A$-orders in ${\mathcal{O}}_{\psi, \mathfrak{p}}$, hence  of the form
\begin{equation}\label{c}
A[\pi_{\fp}(\psi)]
=
A + \mathfrak{c}_{\fp}(\psi) {\mathcal{O}}_{\psi, \mathfrak{p}},
\end{equation}
\begin{equation}\label{c'}
E_{\psi, \fp}
=
A + \mathfrak{c}'_{\fp}(\psi) {\mathcal{O}}_{\psi, \mathfrak{p}}
\end{equation}
for some ideals $\mathfrak{c}_{\fp}(\psi)$, $\mathfrak{c}'_{\fp}(\psi)$ of $A$, satisfying
\begin{equation}\label{divisibility-c}
\mathfrak{c}'_{\fp}(\psi) \mid \mathfrak{c}_{\fp}(\psi).
\end{equation}
We  define
\begin{equation}\label{bp}
\mathfrak{b}_{\fp}(\psi) = b_{\fp}(\psi) A := 
\frac{\mathfrak{c}_{\fp}(\psi)}{\mathfrak{c}'_{\fp}(\psi)},
\end{equation}
where $b_{\fp}(\psi) \in A^{(1)}$.
This is an ideal of $A$  such that 
\begin{equation}\label{bp-measure}
E_{\psi, \fp}/A[\pi_{\fp}(\psi)]
\simeq
A/\mathfrak{b}_{\fp}(\psi).
\end{equation}
In other words, the ideal $\mathfrak{b}_{\fp}(\psi)$ measures how much larger the endomorphism ring 
$E_{\psi, \fp}$ is  than $A[\pi_{\fp}(\psi)]$.

\begin{proposition}\label{ap-bp}
Let $
\Delta(E_{\psi, \mathfrak{p}})$ denote the discriminant ideal of $E_{\psi, \fp}$.
Then, with  prior  notation, 
\begin{equation}\label{ap-bp-ideals}
\left(
a_{\fp}(\psi)^2
-
4 u_{\fp}(\psi) p
\right)
A
=
\mathfrak{b}_{\fp}(\psi)^2
\Delta(E_{\psi, \mathfrak{p}}).
\end{equation}
Consequently, there exists $\delta_{\fp}(\psi) \in A$ such that
\begin{equation*}
\Delta(E_{\psi, \mathfrak{p}}) = \delta_{\fp}(\psi) A
\end{equation*} 
and 
\begin{equation}\label{ap-bp-generators}
a_{\fp}(\psi)^2
-
4 u_{\fp}(\psi) p
=
b_{\fp}(\psi)^2 \delta_{\fp}(\psi).
\end{equation}
\end{proposition}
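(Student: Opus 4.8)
The plan is to identify both sides of (\ref{ap-bp-ideals}) with trace-form discriminant ideals of $A$-orders in the quadratic field $L := F(\pi_\fp(\psi))$, which is separable over $F$ because $q$ is odd, and then to apply the standard transformation law for discriminants under enlargement of an order. First I would compute $\disc(A[\pi_\fp(\psi)])$. By Proposition \ref{propImR} we have $[L:F]=2$, so $P_{\psi,\fp}(x)=x^2+a_\fp(\psi)x+u_\fp(\psi)p$ (its constant term being given by Proposition \ref{FrobTrDet}(ii)) is the minimal polynomial over $F$ of $\pi:=\pi_\fp(\psi)$, and $\{1,\pi\}$ is an $A$-basis of $A[\pi]$. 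From $\pi^2=-a_\fp(\psi)\pi-u_\fp(\psi)p$ one computes $\Tr_{L/F}(1)=2$, $\Tr_{L/F}(\pi)=-a_\fp(\psi)$ and $\Tr_{L/F}(\pi^2)=a_\fp(\psi)^2-2u_\fp(\psi)p$, so the Gram matrix of the trace form in the basis $\{1,\pi\}$ has determinant $a_\fp(\psi)^2-4u_\fp(\psi)p$; hence $\disc(A[\pi_\fp(\psi)])=(a_\fp(\psi)^2-4u_\fp(\psi)p)A$.

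Next I would invoke the standard fact that for two $A$-orders $R'\subseteq R$ of the same $F$-rank in $L$ one has $\disc(R')=[R:R']^2\,\disc(R)$ as ideals of $A$, where $[R:R']$ denotes the module index, i.e.\ the product of the elementary divisors of the finite $A$-module $R/R'$. This follows by writing an $A$-basis of $R'$ in terms of an $A$-basis of $R$ by a matrix $M\in M_2(A)$ with $\det(M)A=[R:R']$, under which the two trace-form Gram matrices are congruent via $M$. Applying this with $R'=A[\pi_\fp(\psi)]\subseteq E_{\psi,\fp}=R$, using the inclusion from (\ref{rings}) and the isomorphism (\ref{bp-measure}), namely $E_{\psi,\fp}/A[\pi_\fp(\psi)]\simeq A/\mathfrak{b}_\fp(\psi)$, which forces $[E_{\psi,\fp}:A[\pi_\fp(\psi)]]=\mathfrak{b}_\fp(\psi)$, I obtain $\disc(A[\pi_\fp(\psi)])=\mathfrak{b}_\fp(\psi)^2\,\Delta(E_{\psi,\fp})$. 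Comparing this with the computation of the previous paragraph gives (\ref{ap-bp-ideals}).

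For the final assertion, since $A=\F_q[T]$ is a principal ideal domain the ideal $\Delta(E_{\psi,\fp})$ is principal, say generated by some $\delta\in A$; then (\ref{ap-bp-ideals}) shows that $a_\fp(\psi)^2-4u_\fp(\psi)p$ and $b_\fp(\psi)^2\delta$ generate the same ideal of $A$, hence differ by a factor in $\F_q^\times$, and after rescaling $\delta$ by that factor (which does not change the ideal it generates) one gets $\delta_\fp(\psi)$ with $\Delta(E_{\psi,\fp})=\delta_\fp(\psi)A$ and $a_\fp(\psi)^2-4u_\fp(\psi)p=b_\fp(\psi)^2\delta_\fp(\psi)$. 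I do not anticipate a serious obstacle: this is elementary commutative algebra once the chain $A\subsetneq A[\pi_\fp(\psi)]\subseteq E_{\psi,\fp}\subseteq\mathcal{O}_{\psi,\fp}$ of orders inside the quadratic field $L$ and the isomorphism (\ref{bp-measure}) are in hand from $\S$\ref{ss2.1}. The only step deserving care is matching the notion of ``discriminant ideal'' in the statement ($\Delta(E_{\psi,\fp})$) with the trace-form discriminant so that the transformation law used above applies verbatim; the hypothesis that $q$ is odd enters exactly here, ensuring $2\in\F_q^\times$, so that $L/F$ is separable and the Gram matrix of the first paragraph is non-degenerate.
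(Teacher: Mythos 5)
Your proof is correct and rests on the same underlying fact as the paper's, namely the transformation law for trace-form discriminants under enlargement of an order. The only difference is that you apply it once, directly to the inclusion $A[\pi_\fp(\psi)]\subseteq E_{\psi,\fp}$ using the module index supplied by (\ref{bp-measure}), whereas the paper applies it twice, relating each of $A[\pi_\fp(\psi)]$ and $E_{\psi,\fp}$ to the maximal order $\mathcal{O}_{\psi,\fp}$ via the conductor presentations (\ref{c}) and (\ref{c'}) and then dividing; your version bypasses the detour through $\mathcal{O}_{\psi,\fp}$ but is otherwise the same computation.
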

\begin{proof}
Let 
$\Delta(\mathcal{O}_{\psi, \mathfrak{p}})$
be the discriminant ideal of $\mathcal{O}_{\psi, \mathfrak{p}}$,
and let
$$d_\fp(\psi) := a_{\fp}(\psi)^2 - 4 u_{\fp}(\psi) p \in A$$
 be the discriminant of the characteristic polynomial $P_{\psi, \mathfrak{p}}$.
On one hand, by (\ref{c'}),
$$
\Delta(E_{\psi, \mathfrak{p}}) =
\mathfrak{c}'_{\fp}(\psi)^2 \Delta(\mathcal{O}_{\psi, \mathfrak{p}});
$$
hence, upon multiplying by $\mathfrak{c}_{\fp}(\psi)^2$ and using (\ref{bp}),
\begin{equation}\label{disc-relation1}
\mathfrak{b}_{\fp}(\psi)^2 \Delta(E_{\psi, \mathfrak{p}}) 
= 
\mathfrak{c}_{\fp}(\psi)^2 \Delta(\mathcal{O}_{\psi, \mathfrak{p}}).
\end{equation}
On the other hand, by (\ref{c}), 
\begin{equation}\label{disc-relation2}
d_{\fp}(\psi) A
=
\mathfrak{c}_{\fp}(\psi)^2 \Delta(\mathcal{O}_{\psi, \mathfrak{p}}).
\end{equation}
By putting (\ref{disc-relation1}) and  (\ref{disc-relation2}) together, we  complete the proof.
\end{proof}

\begin{proof}[Proof of Theorem \ref{global-artin}]
By definition,  $E_{\psi, \mathfrak{p}}$
is the centralizer of the image of $A$ under $\psi$ in $\F_{\mathfrak{p}}\{\tau\}$. Thus there exists a natural embedding
$$\phi: E_{\psi, \mathfrak{p}} \hookrightarrow \F_{\mathfrak{p}}\{\tau\}$$
such that the diagram 
$$
\xymatrix{
A \ar[r]  \ar[dr]_-{\psi \otimes \mathfrak{p}} &  E_{\psi, \mathfrak{p}}  \ar[d]^{\phi}  
\\
     & \F_{\mathfrak{p}}\{\tau\}
}
$$
is commutative.

Recalling  that $A \subsetneq E_{\psi, \mathfrak{p}}$ and using that  $E_{\psi, \mathfrak{p}}$ is an 
$A$-module of rank 2, while $\psi$ is a Drinfeld $A$-module of rank 2,   we see that $\phi$ defines an 
elliptic $E_{\psi, \mathfrak{p}}$-module over $\F_{\mathfrak{p}}$ of rank 1 in the sense of Definition 2.1 in \cite{Ha}.
We will use $\phi$ to determine the action of the Frobenius 
of $\Gal(\overline{\F}_{\mathfrak{p}}/\F_{\mathfrak{p}})$ on $\psi[a]$.

On one hand, since $(a, p) = 1$,
  we have an isomorphism of $E_{\psi, \mathfrak{p}}$-modules
$
\phi[a] \simeq_{E_{\psi, \mathfrak{p}}} E_{\psi, \mathfrak{p}}/a E_{\psi, \mathfrak{p}}.
$
On the other hand, from the commutative diagram, we have
$
\psi[a] \simeq_{E_{\psi, \mathfrak{p}}} \phi[a].
$
Thus
$$
\psi[a] \simeq_{E_{\psi, \mathfrak{p}}} E_{\psi, \mathfrak{p}}/a E_{\psi, \mathfrak{p}}.
$$
Under this isomorphism, the action of the Frobenius of $\Gal(\overline{\F}_{\mathfrak{p}}/\F_{\mathfrak{p}})$ on 
$\psi[a]$ corresponds to multiplication by $\pi_{\fp}(\psi)$
on $E_{\mathfrak{p}} (\psi)/a E_{\psi, \mathfrak{p}}$.

We now explore how this action extends to the $A$-module structure of $\psi[a]$.
We fix a square root $\sqrt{\delta_{\fp}(\psi)}$ of $\delta_{\fp}(\psi)$ in $F^{\text{sep}}$ and write
$$
E_{\psi, \mathfrak{p}} = A + \sqrt{\delta_{\fp}(\psi)} A.
$$
By  (\ref{ap-bp-generators}), 
\begin{equation}\label{pi-1}
\pi_{\fp}(\psi) = -\frac{a_{\fp}(\psi)}{2} + \sqrt{\delta_{\fp}(\psi)} \frac{b_{\fp}(\psi)}{2}
\in
E_{\psi, \mathfrak{p}},
\end{equation}
and so the action of $\pi_{\fp}(\psi)$ on the $A$-module $E_{\psi, \mathfrak{p}}$ is given by
(\ref{pi-1})  and
$$
\pi_{\fp}(\psi) \sqrt{\delta_{\fp}(\psi)} = 
\frac{\delta_{\fp}(\psi) b_{\fp}(\psi)}{2}
 +
 \sqrt{\delta_{\fp}(\psi)} \left(-\frac{a_{\fp}(\psi)}{2}\right).
$$
This completes the proof of Theorem \ref{global-artin}. 
\end{proof}

Corollary \ref{split-compl} is an immediate consequence to Theorem \ref{global-artin}.
The description of $d_{1, \fp}(\psi)$  in
Corollary \ref{structure-mod-p} is a consequence to Corollary \ref{split-compl} and  
the property that, for $a \in A$ with $(a, p) = 1$,
$\mathfrak{p}$ splits completely in $F(\psi[a])/F$ if and only if 
$A/a A \times A /a A$ is isomorphic to an $A$-submodule of ${^\psi}\F_{\mathfrak{p}}$; 
see \cite[Prop. 23]{CoSh}.
The description of $d_{2, \fp}(\psi)$ in Corollary \ref{structure-mod-p} is a consequence
of 
$$
P_{\psi, \mathfrak{p}}(1) A = \chi({^\psi}\F_{\mathfrak{p}})=d_{1, \fp}(\psi)d_{2, \fp}(\psi) A,
$$
 where $\chi({^\psi}\F_{\mathfrak{p}})$ denotes the Euler-Poincar\'{e} characteristic of ${^\psi}\F_{\mathfrak{p}}$ (see \cite{Ge}).
 
\subsection{Proof of Theorem \ref{thmMain4}}

To simplify the notation, let $\pi:=\pi_\fp(\psi)$ and $E:=E_{\psi, \fp}$. 
From Proposition \ref{propImR} and (\ref{rings}) we get that $A[\pi]\subseteq E$ are $A$-orders in $F(\pi)$. 
Since $A$ is a principal ideal domain and $[F(\pi):F]=r$, the $A$-modules $A[\pi]$ and $E$ are free of rank $r$.  
Now by the elementary divisors theorem \cite[Theorem III.7.8]{Lang}, there is an exact sequence of $A$-modules 
\begin{equation}\label{eq-HRankb}
0\longrightarrow A[\pi]\longrightarrow E \longrightarrow 
 A/b_0 A\oplus A/b_1 A\oplus\cdots\oplus A/b_{r-1}A\longrightarrow 0,
\end{equation}
where $b_0, \dots, b_{r-1}\in A$ are uniquely determined monic polynomials such that 
\begin{equation}\label{eq-div}
b_0 \mid b_{1}\mid \cdots \mid b_{r-1}. 
\end{equation}
(Of course, the $b_i$'s depend on $\psi$ and $\fp$, which we omit from notation.) 
Note that every element of $A[\pi]$, considered as an element of $E$, 
is a multiple of $b_0$. But $1\in A[\pi]$, so $b_0=1$. In other terms, $A/b_0A$ is trivial, and can be ignored. This proves part (a) of the theorem. 

If we assume that $r$ is coprime to $q$, then the extension $F(\pi)/F$ is separable.  
The elementary properties of discriminants then imply (cf. \cite[Exercise VI.32]{Lang}) 
$$
\disc(P_{\psi, \fp}) A=\disc(A[\pi])=\disc(E)(b_{\fp, 1}(\psi)\cdots b_{\fp, r-1}(\psi))^2,
$$
which is (b). 

As in the rank-$2$ case, we have an isomorphism $\psi[m] \simeq_{E} E/mE$ with the action of $\bar{\rho}_{\psi, m}(\Frob_\fp)$ 
on the left hand side corresponding to multiplication by $\pi$ on the right hand side. Consider the $A$-linear 
transformation of the free rank-$r$ $A$-module $E$ induced by multiplication by $\pi$. This transformation 
is congruent to an element of the center $Z(A)\cong A$ of $M_r(A)$ modulo $m$ if and only if $A[\pi]\subseteq A+mE$. 
On the other hand, $A[\pi]\subseteq A+mE$ if and only if $(E/mE)/(A[\pi]/(A[\pi]\cap mE))\cong (A/mA)^{\oplus r-1}$. 
Tensoring (\ref{eq-HRankb}) with $A/mA$, we see that this last condition is equivalent to 
$$
(A/b_1A\otimes_A A/mA)\oplus\cdots\oplus (A/b_{r-1}A\otimes_A A/mA) \cong (A/mA)^{\oplus r-1}. 
$$
As is easy to check, 
$$
A/b_iA\otimes_A A/mA\cong A/\gcd(b_i, m)A. 
$$
Thus, $\Frob_\fp$ acts trivially on $J_m$ (equivalently,  $\fp$ splits completely in $J_m$) 
if and only if $m$ divides all $b_i$. Since $b_1$ divides all $b_i$, this 
last condition is equivalent to $m|b_1$. This concludes the proof of the theorem. 

\section{Abhyankar trimonials: Proof of Theorem \ref{abhyankar-thm}}

Let $\psi$ and $f_\psi$ be as in (\ref{psiT}) and (\ref{AbTri}), respectively. 
Let $\Gal(f_{\psi})$ denote the Galois group of the splitting field of $f_{\psi}$ over $F$.
We consider the composition
of $\bar{\rho}_{\psi, T}$ with the natural projection onto $\PGL_r(A/TA)$, and, after identifying 
$A/T A \simeq \F_q$,  we write it  as
$$
\hat{\rho}_{\psi, T}: G_F \longrightarrow \PGL_r(\F_q).
$$
If $0\neq s\in \psi[T]$, then $s^{q-1}$ is a zero of $f_\psi(x)$. 
The center $Z(\F_q) \simeq \F_q^\times$ of $\GL_2(\F_q)$ acts on $\psi[T]$ by the usual
multiplication, i.e., $\gamma\in \F_q^\times$ maps $s$ to $\gamma s\in F^\mathrm{sep}$. Hence 
$\gamma$ maps $s^{q-1}$ to $\gamma^{q-1}s^{q-1}=s^{q-1}$, so the action of $\GL_r(\F_q)$ 
on the set of zeros of $f_\psi$, induced from the action on $\psi[T]$, factors through $\PGL_r(\F_q)$.  
This implies that the action of $G_F$ on the set of zeros of $f_\psi$ factors through $\hat{\rho}_{\psi, T}$, and 
\begin{equation}\label{galoispsi}
\Gal(f_{\psi}) \simeq \hat{\rho}_{\psi, T}(G_F).
\end{equation} 

Now let $\fp \in {\cal{P}}_{\psi}$, $\fp \neq T$. 
It follows from (\ref{galoispsi}) and Theorem \ref{thmMain4} (c) that 
$f_{\psi}$ splits completely modulo $\fp$ if and only if $b_{\fp,1}(\psi) \equiv 0\ (\mod T)$.
Therefore, part (a) of Theorem \ref{abhyankar-thm} follows from Theorem \ref{thmMain4} (b) and (\ref{ap-bp-generators}).

Now we focus on part (b) of Theorem \ref{abhyankar-thm}. Let $\psi$ be the rank-$2$ Drinfeld module defined by 
$\psi_T = T + \tau + g \tau^2$. Our goal is to prove that, provided either $g \in \F_q^\times$ 
or $g = h^{q-1}$ for some non-constant $h \in A$ not divisible by any prime 
of degree $1$ except possibly $T$,
\begin{equation}\label{image}
\hat{\rho}_{\psi, T}(G_F) \simeq \PGL_2(\F_q).
\end{equation}
For this, we will follow the general strategy of \cite[$\S$2.8]{Serre}.

Let us consider the case $g \in \F_q^\times$. Then $\psi$ has good reduction at every prime
of $A$, and so the extension $F(\psi[T])/F$ is unramified away from $T$ and $\infty$.
In particular, it is unramified  at every prime $\fp = p A$ defined by $p = T - c$ for some 
$c \in \F_q^\times$.  For such $\fp$ let us outline a few properties of $\bar{\rho}_{\psi, T} (\Frob_{\fp})$, which will
eventually restrict the possible group  structures of  $\hat{\rho}_{\psi, T}(G_F)$.
By Proposition \ref{FrobTrDet}, 
$$
\det \bar{\rho}_{\psi, T} (\Frob_{\fp})
= 
u_{\fp}(\psi) p \mod T
=
\frac{c}{g}. 
$$
Therefore
\begin{equation}\label{eqDetmap}
\det \bar{\rho}_{\psi, T} :
G_F \longrightarrow \F_q^\times \quad \text{is surjective.}
\end{equation}
Again, by Proposition \ref{FrobTrDet}, 
\begin{equation}\label{eqTrrhobar}
\tr \bar{\rho}_{\psi, T} (\Frob_{\fp})
=
- a_{\fp}(\psi)=-\frac{1}{g} \in \F_q^\times.
\end{equation}
Hence
\begin{equation}\label{first-restr}
d_\fp(\psi)= a_{\fp}(\psi)^2 - 4 u_{\fp}(\psi) p
=
\frac{1}{g^2} - \frac{4c}{ g},
\end{equation} 
\begin{equation}\label{second-restr}
t_\fp(\psi)
:= \frac{\tr \bar{\rho}_{\psi, T} (\Frob_{\fp})^2}{\det \bar{\rho}_{\psi, T} (\Frob_{\fp})}=
\frac{a_{\fp}(\psi)^2}{u_{\fp}(\psi) p}
=
\frac{1}{c g}.
\end{equation}
Since $q$ is odd, (\ref{first-restr}) implies that $d_\fp(\psi)$ assumes all values of 
$\F_q \backslash \left\{\frac{1}{g^2}\right\}$. In particular, since $q \geq 5$, 
\begin{equation}\label{sq}
\text{there are $\fp$ for which $d_\fp(\psi)$ is a non-zero square}
\end{equation}
and
\begin{equation}\label{nonsq}
\text{there are $\fp$ for which $d_\fp(\psi)$ is not a square}. 
\end{equation}
Moreover, (\ref{second-restr}) implies that there are $\fp$ for which   
\begin{equation}\label{fvalues}
t_\fp(\psi)\not\in \{0, 1, 2, 4\}
\end{equation} 
and
\begin{equation}\label{fequation}
t_\fp(\psi)\; \text{does not satisfy $u^2 - 3 u + 1 = 0$}
\end{equation}
(for example, if the characteristic 
is not $3$, then $c := (3g)^{-1}$ gives the value $3$, which satisfies these restrictions). 

We will use the following classical theorem:

\begin{theorem}[Dickson]
Any proper subgroup of $\PGL_2(\F_q)$ is contained in one of the groups:
\begin{itemize}
\item[(i)] a Borel subgroup;
\item[(ii)] $\PSL_2(\F_q)$; 
\item[(iii)] a conjugate of the subgroup $\mathrm{PGL}_2(\F)$ for some subfield $\F\subsetneq \F_q$;
\item[(iv)] a dihedral group $D_{2n}$ of order $2n$, where $n$ is not divisible by the characteristic of $\F_q$;
\item[(v)] a subgroup isomorphic to one of the permutation groups $A_4$, $A_5$, $S_4$. 
\end{itemize}
\end{theorem}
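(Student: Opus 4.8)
The plan is to study a proper subgroup $H\subsetneq \PGL_2(\F_q)$ through its faithful action on the projective line $\P^1(\F_q)$, a set of $q+1$ points on which $\PGL_2(\F_q)$ acts sharply $3$-transitively. Write $p=\car(\F_q)$ and let $P$ be a Sylow $p$-subgroup of $H$. The argument divides according to whether $p\mid |H|$, that is, whether $P$ is nontrivial.

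\textbf{The tame case $p\nmid |H|$.} Here every nontrivial element of $H$ is semisimple, hence becomes diagonal over $\overline{\F}_q$ with two distinct eigenvalue ratios, so it has exactly two fixed points on $\P^1(\overline{\F}_q)$; moreover, the stabiliser in $H$ of any such point is a $p'$-subgroup of a Borel, hence cyclic. I would then run the Burnside ``counting of poles'' argument familiar from the classification of finite rotation groups: equating $\sum_{g\in H\setminus\{1\}}\#\{\text{fixed points of }g\}=2(|H|-1)$ with the count organised by $H$-orbits of poles and their cyclic stabiliser orders forces $H$ to be cyclic, dihedral, or isomorphic to one of $A_4$, $S_4$, $A_5$. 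It then remains to place each case inside (i)--(v): a cyclic $H$ is generated by a regular semisimple element and so lies in a maximal torus of $\PGL_2(\F_q)$, a split one sitting inside a Borel (case (i)) and a nonsplit one having dihedral normaliser $D_{2(q+1)}$ with $p\nmid q+1$ (case (iv)); a dihedral $H$ embeds into the normaliser of a split or nonsplit torus, so it is of type (iv); and the three exceptional groups are exactly case (v).

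\textbf{The wild case $p\mid |H|$.} A nontrivial $p$-element of $\PGL_2(\F_q)$ is unipotent and fixes a unique point of $\P^1(\F_q)$; since $q+1\equiv 1\pmod p$, the $p$-group $P$ fixes at least one point, and, being contained in the unipotent radical of that point's stabiliser, it is elementary abelian and fixes exactly that point, say $x$. If $P\lhd H$, then $H=N_H(P)$ fixes $x$, so $H$ lies in the Borel subgroup stabilising $x$ --- case (i). If $P$ is not normal, choose two distinct Sylow $p$-subgroups $P_1,P_2$, fixing distinct points; after moving these points to $0$ and $\infty$ one has $P_1=\{z\mapsto z+a : a\in V_1\}$ and $P_2=\{z\mapsto z/(1+bz) : b\in V_2\}$ for additive subgroups $V_1,V_2\subseteq \F_q$. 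An explicit $2\times 2$ matrix computation then shows that $\langle P_1,P_2\rangle$ already contains all transvections and torus elements over the subfield $\F_{q_0}\subseteq \F_q$ cut out by $V_1V_2$, so the normal subgroup $N:=\langle\text{all Sylow }p\text{-subgroups of }H\rangle$ of $H$ equals a conjugate of $\PSL_2(\F_{q_0})$. Finally, $H$ normalises $N$ and has trivial centraliser of it, so $H$ embeds into $\Aut(\PSL_2(\F_{q_0}))$ while containing the inner automorphisms; no nontrivial field automorphism of $\F_{q_0}$ is realised by conjugation inside $\PGL_2(\F_q)$, so $H$ is conjugate to $\PSL_2(\F_{q_0})$ or $\PGL_2(\F_{q_0})$. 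If $\F_{q_0}=\F_q$, then $H$ being proper and $[\PGL_2(\F_q):\PSL_2(\F_q)]\le 2$ gives $H=\PSL_2(\F_q)$ --- case (ii); if $\F_{q_0}\subsetneq \F_q$, then $H$ is contained in a conjugate of $\PGL_2(\F_{q_0})$ --- case (iii).

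\textbf{The main obstacle} is the generation step in the wild case: proving that two unipotent subgroups $P_1,P_2$ --- equivalently, two additive subgroups $V_1,V_2\subseteq \F_q$ --- generate precisely $\PSL_2$ of the subfield they determine (already the fact that $V_1V_2$ generates a subfield, and not merely an additive subgroup, is the crux), and then pinning down the overgroup $H\supseteq N$ via the structure of $\Aut(\PSL_2(\F_{q_0}))$ and the verification that field automorphisms of $\F_{q_0}$ do not lie in the image of $\PGL_2(\F_q)$ when $q_0>p$. This is the computational heart of Dickson's original treatment, and the one part of the proof where one cannot avoid working directly with $2\times 2$ matrices over $\F_q$ and the interplay between its additive and multiplicative structures.
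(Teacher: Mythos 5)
The paper does not actually prove Dickson's theorem; it cites Huppert, \emph{Endliche Gruppen I}, Satz II.8.27 (stated there for $\PSL_2(\F_q)$; the extension to $\PGL_2(\F_q)$ is routine). So there is no in-paper argument to compare against. Your proposal is a reasonable reconstruction of the standard line of attack: split on whether $p=\car\F_q$ divides $|H|$; in the tame case, reduce by the ``pole-counting'' identity $\sum_{g\neq 1}\#\mathrm{Fix}(g)=2(|H|-1)$ to the same Diophantine constraints that classify finite rotation groups, giving cyclic/dihedral/$A_4$/$S_4$/$A_5$; in the wild case, distinguish a normal Sylow $p$-subgroup (giving a Borel) from a non-normal one (giving a subfield subgroup). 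The tame half, including the correct placement of cyclic and dihedral $H$ into the normalizers of split or nonsplit tori, is essentially complete as sketched.

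However, the wild, non-normal-Sylow half has a genuine gap, which you flag but do not close. You need three nontrivial facts, none of which follows from what you've written: first, that the additive subgroups $V_1,V_2\subseteq\F_q$ attached to two Sylow $p$-subgroups force a \emph{subfield} structure (the set $V_1V_2$ is merely a set of products, and showing $V_1,V_2$ are in fact $\F_{q_0}$-subspaces with $\F_{q_0}$ a subfield is exactly where Dickson's $2\times 2$ computation lives); second, that $\langle P_1,P_2\rangle$ contains all of $\PSL_2(\F_{q_0})$ and not merely a proper subgroup generated by transvections and diagonal elements; and third, that the maximal $\F_{q_0}$ obtained does not grow when you bring in the remaining Sylow $p$-subgroups of $H$, so that $N=\langle\text{all Sylows}\rangle$ really is one fixed $\PSL_2(\F_{q_0})$. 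Finally, the transition from $N_{\PGL_2(\F_q)}(N)$ to $\PGL_2(\F_{q_0})$ rests on the unproved assertion that no nontrivial field automorphism of $\F_{q_0}$ is realized by conjugation in $\PGL_2(\F_q)$, which also needs an argument (it is true, via the fact that the normalizer of $\GL_2(\F_{q_0})$ in $\GL_2(\F_q)$ is $\F_q^\times\cdot\GL_2(\F_{q_0})$, but it is not free). In short, the proposal is a correct high-level roadmap matching the classical approach, but the computational heart of Dickson's theorem --- precisely the step that makes it a theorem rather than a heuristic --- is acknowledged yet left unproved.
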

\begin{proof}
See \cite[Thm. 8.27]{Huppert}. 
\end{proof}

The properties of $H := \hat{\rho}_{\psi, T} (G_F)$ derived from the above observations will exclude all cases 
in Dickson's theorem, leaving $H=\PGL_2(\F_q)$ as the only possibility.   
Indeed, (i) is not possible by (\ref{nonsq}), and (ii) is not possible by (\ref{eqDetmap}). 
If $H$ is conjugate to a subgroup of $\mathrm{PGL}_2(\F)$, then $t_\fp(\psi)\in \F$ for all $p=T-c$.  
This contradicts the fact that $t_\fp(\psi)=(cg)^{-1}$ assumes all values in $\F_q^\times$ as $c$ varies. Hence (iii) 
is not possible. If $H$ is isomorphic to $A_4$, $A_5$ or $S_4$, then for each $h\in H$, the element $u=\tr(h)^2/\det(h)$ 
is equal to $0,1,2,4$ or satisfies $u^2-3u+1=0$; this follows from \cite[$\S$2.6]{Serre}, although in \cite{Serre} 
this is stated for prime fields. Hence (v) is not possible by (\ref{fvalues}) and (\ref{fequation}). 
Finally, to exclude (iv) we argue as in \cite[p. 284]{Serre}. If $H$ is cyclic or dihedral, then $\bar{\rho}_{\psi, T}(G_F)$ 
is contained in a normalizer of a Cartan subgroup of $\GL_2(\F_q)$. But the trace of $\bar{\rho}_{\psi, T}(\Frob_\fp)$ 
is non-zero by (\ref{eqTrrhobar}), and by (\ref{nonsq}) there is $\fp$ for which $d_\fp(\psi)$ is not a square; this leads to  
a contradiction as in \cite{Serre}.  

To prove that $\hat{\rho}_{\psi, T} (G_F)=\PGL_2(\F_q)$ when 
$g = h^{q-1}$ for some non-constant $h \in A$ not divisible by any prime 
of degree $1$, except possibly $T$, one can use 
the same arguments as above, based on the calculations (below, $\fp=T-c$ with $c\in \F_q^\times$)
$$
\det(\bar{\rho}_{\psi, T}(\Frob_{\fp})) = (-1)h(c)^{-(q-1)}(T-c) = c \in \F_q^\times,
$$
$$
\tr(\bar{\rho}_{\psi, T}(\Frob_{\fp})) = -\frac{1}{h(c)^{q-1}} = -1 \in \F_q^\times. 
$$

As we mentioned in the introduction, part (c) of Theorem \ref{abhyankar-thm} follows from the main 
result in \cite{Ab1}. Finally, for a Drinfeld module $\psi$ producing $f_\psi$ in part (b) or (c), the Chebotarev Density Theorem implies 
that the Dirichlet density of 
$$
\{\fp \in {\cal{P}}_{\psi}: b_{\fp,1}(\psi) \equiv 0\ (\mod T) 
\}= \{\fp \in {\cal{P}}_{\psi}: f_{\psi} \; \text{splits completely modulo $\fp$} \}
$$
exists and equals $\frac{1}{\#\PGL_r(\F_q)}$.

\section{Reductions of Drinfeld modules: Proof of Theorem \ref{bp-one}}

\subsection{Preliminaries}

The proofs of the following two lemmas are elementary and are left to the reader:

\begin{lemma}\label{lemma1}
Let $y \geq 1$ be an integer. Then:
\begin{enumerate}
\item[(i)]
$
\ds\sum_{m \in A^{(1)} \atop{0 \leq \deg m \leq y}
}
1
=
\frac{q^{y+1} - 1}{q-1};
$
\item[(ii)]
$
\ds\sum_{m \in A^{(1)} \atop{0 \leq \deg m \leq y}
}
\deg m
\leq
y \frac{q^{y+1} - 1}{q-1}.
$
\end{enumerate}
\end{lemma}

\begin{lemma}\label{lemma2}
Let $y \geq 3$ be an integer and let $\alpha > 1$.
Then:
\begin{enumerate}
\item[(i)]
$
\ds\sum_{a \in A \atop{\deg a > y}}
\frac{1}{q^{\alpha \deg a}}
=
\frac{q}{\left(1 - \frac{1}{q^{\alpha - 1}}\right) q^{(\alpha - 1)(y + 1)}};
$
\item[(ii)]
$
\ds\sum_{a \in A \atop{\deg a > y}}
\frac{\log \deg a}{q^{\alpha \deg a}}
\leq
\frac{\log y}{(\alpha - 1) q^{(\alpha - 1) y} \log q }
+
\frac{1}{y (\alpha - 1)^2 q^{(\alpha - 1) y} (\log q)^2}, 
$ provided that $(\alpha-1)y\log q\log y>~1$. 
\end{enumerate}
\end{lemma}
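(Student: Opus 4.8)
The plan is to reduce both estimates to one–variable series indexed by the degree $n = \deg a$. Grouping the polynomials $a \in A$ by their degree and recalling that there are exactly $(q-1)q^{n}$ of degree $n$ (in particular at most $q^{n+1}$), each sum over $a$ with $\deg a > y$ becomes a sum over integers $n \geq y+1$; since $\alpha > 1$, the resulting series converge, with common ratio essentially $q^{1-\alpha} < 1$.

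For part (i) this is immediate: after the grouping, the left–hand side is a geometric series $\sum_{n \geq y+1} N_n\, q^{-\alpha n}$, where $N_n$ is the number of polynomials of degree $n$, with common ratio $q^{1-\alpha}$. Summing from the first term $n = y+1$ and rewriting $1 - q^{1-\alpha} = 1 - q^{-(\alpha-1)}$ gives the closed form on the right. (The precise numerical constant depends only on whether one uses the exact count $(q-1)q^{n}$ or the convenient bound $q^{n+1}$.)

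For part (ii), the grouping reduces the claim to bounding $S := \sum_{n \geq y+1}(\log n)\,q^{-(\alpha-1)n}$ by the right–hand side. I would compare $S$ to the integral of $f(t) := (\log t)\,q^{-(\alpha-1)t} = (\log t)e^{-ct}$ with $c := (\alpha-1)\log q$, using the standard fact that $\sum_{n \geq y+1} f(n) \leq \int_{y}^{\infty} f(t)\,dt$ \emph{once $f$ is non-increasing on $[y,\infty)$}. Here $f'(t) = e^{-ct}\bigl(\tfrac{1}{t} - c\log t\bigr)$, so $f'(t) < 0$ exactly when $ct\log t > 1$, which at $t = y$ is precisely the hypothesis $(\alpha-1)y(\log q)(\log y) > 1$, and then persists for all $t \geq y$ since $t \mapsto t\log t$ is increasing. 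A single integration by parts gives $\int_{y}^{\infty}(\log t)e^{-ct}\,dt = \frac{\log y}{c}e^{-cy} + \frac{1}{c}\int_{y}^{\infty}\frac{e^{-ct}}{t}\,dt$, and bounding $\frac{1}{t} \leq \frac{1}{y}$ in the remaining integral yields $\int_{y}^{\infty}\frac{e^{-ct}}{t}\,dt \leq \frac{1}{y}\cdot\frac{e^{-cy}}{c}$; substituting $c = (\alpha-1)\log q$ and $e^{-cy} = q^{-(\alpha-1)y}$ produces the two displayed terms.

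The only delicate point — and the sole reason the somewhat unusual side condition in (ii) is imposed — is verifying the monotonicity of $f$ that licenses the passage from the sum to the integral; everything else is a routine geometric–series and integration–by–parts computation, with the constants adjustable according to how generously one counts the polynomials of a given degree.
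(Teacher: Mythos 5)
The paper itself leaves these two estimates to the reader, so there is no proof to compare against; on the merits, your approach is the natural one and, for the substance of the estimate, correct. Grouping by degree $n = \deg a$ turns (i) into a geometric series, and for (ii) the passage to the integral $\int_y^\infty (\log t)e^{-ct}\,dt$ with $c=(\alpha-1)\log q$, followed by one integration by parts and the crude bound $1/t\le 1/y$, reproduces both displayed terms exactly; your identification of the side condition $(\alpha-1)y(\log q)(\log y)>1$ as precisely the threshold for $f'(y)\le 0$, and hence for the legitimacy of the sum-to-integral step, is the genuinely non-obvious observation and you have it right.

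The one point to be more careful about is the count factor, which you acknowledge but do not pin down. With the exact count $\#\{a\in A:\deg a=n\}=(q-1)q^n$, part (i) would come out to $(q-1)/\bigl((1-q^{1-\alpha})q^{(\alpha-1)(y+1)}\bigr)$; the constant $q$ printed in the lemma corresponds to the crude overcount $q^{n+1}$, and the ``$=$'' there should really be ``$\le$''. For (ii), by contrast, the printed right side coincides with your bound for $S:=\sum_{n\ge y+1}(\log n)q^{-(\alpha-1)n}$ with \emph{no} leading factor, i.e.\ it matches the monic count $\#\{a\in A^{(1)}:\deg a = n\}=q^n$. Strictly, then, your argument for (ii) proves $S\le\text{RHS}$, while the sum over all of $A$ is $(q-1)S$; you need to either restrict to monic $a$ (which is in fact how the lemma is applied downstream, to sums over $m\in A^{(1)}$) or absorb the extra factor into the bound. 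This is a normalization wrinkle in the lemma's statement rather than in your argument, and it is harmless for the $\ll$-estimates that follow; but a cleaner write-up would commit to the monic count throughout, which makes your derivation of (ii) an exact match and turns (i) into $1/\bigl((1-q^{1-\alpha})q^{(\alpha-1)(y+1)}\bigr)$.
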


\begin{lemma}\label{lemma-tau}
Let $h \in A \backslash \F_q$ and $\tau_A(h) := \ds\sum_{d \in A \atop{d | h}} 1$ its divisor function. Then, for any $\varepsilon > 0$,
$$
\tau_A(h) \ll_{\varepsilon} |h|_{\infty}^{\varepsilon}.
$$
\end{lemma}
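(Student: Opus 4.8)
The plan is to imitate the classical proof that the ordinary divisor function satisfies $d(n)\ll_\varepsilon n^\varepsilon$, exploiting the fact that $A=\F_q[T]$ is a unique factorization domain. First I would reduce to counting monic divisors: writing $h = u\,p_1^{e_1}\cdots p_s^{e_s}$ with $u\in\F_q^\times$ and $p_1,\dots,p_s$ distinct monic irreducibles, every divisor of $h$ is a unit times a monic divisor, and the monic divisors are exactly the $p_1^{f_1}\cdots p_s^{f_s}$ with $0\le f_i\le e_i$; hence
\[
\tau_A(h) = (q-1)\prod_{i=1}^{s}(e_i+1).
\]
Since $|h|_\infty = q^{\deg h} = \prod_i q^{e_i\deg p_i} = \prod_i |p_i^{e_i}|_\infty$, the ratio $\tau_A(h)/|h|_\infty^\varepsilon$ is, up to the factor $q-1$, a product over $i$ of the local factors $(e_i+1)\,q^{-\varepsilon e_i\deg p_i}$, so it suffices to bound each local factor and to control how many of them can exceed $1$.

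Second, fix $\varepsilon>0$ and set $D_\varepsilon := \lceil (\log 2)/(\varepsilon\log q)\rceil$. For a prime $p$ with $\deg p\ge D_\varepsilon$ one has $q^{\varepsilon\deg p}\ge 2$, hence for every $e\ge 1$,
\[
e+1\le 2^{e}\le \bigl(q^{\varepsilon\deg p}\bigr)^{e}=q^{\varepsilon e\deg p},
\]
so the local factor at such a prime is $\le 1$. For a prime $p$ with $\deg p< D_\varepsilon$ I would use only $\deg p\ge 1$ together with the elementary observation that $C_\varepsilon := \sup_{e\ge 0}(e+1)q^{-\varepsilon e}$ is finite (the sequence tends to $0$), which bounds the corresponding local factor by $C_\varepsilon$. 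The number of monic irreducibles of degree $<D_\varepsilon$ is a finite number $N_\varepsilon$ depending only on $q$ and $\varepsilon$, so at most $N_\varepsilon$ local factors exceed $1$, and each such factor is $\le C_\varepsilon$.

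Combining the two cases gives $\tau_A(h)/|h|_\infty^\varepsilon \le (q-1)\,C_\varepsilon^{N_\varepsilon}$, a constant depending only on $q$ and $\varepsilon$, which is precisely the asserted estimate. There is no real obstacle here; the only points deserving a little care are the reduction modulo units (which is what produces the harmless factor $q-1$; the hypothesis $h\notin\F_q$ is used only to ensure $h$ is a nonzero non-unit) and the uniform finiteness of $C_\varepsilon$, both of which are entirely routine.
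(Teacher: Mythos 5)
Your argument is correct and mirrors the paper's proof almost exactly: both transcribe the classical divisor-bound argument to $\F_q[T]$, factor $h$ into monic irreducibles, split the primes at the same threshold (your $\deg p < D_\varepsilon$ is equivalent to the paper's $|\ell|_\infty < 2^{1/\varepsilon}$), bound the large-prime local factors by $1$ via $e+1 \leq 2^e$, and bound the finitely many small-prime local factors by a constant. The only cosmetic differences are that you package the small-prime bound as a supremum $C_\varepsilon := \sup_{e\ge 0}(e+1)q^{-\varepsilon e}$ while the paper derives the explicit estimate $(\alpha+1)/|\ell|_\infty^{\alpha\varepsilon} \leq \exp\bigl(1/(\varepsilon\log 2)\bigr)$, and you track the unit factor $q-1$ explicitly, which the paper's displayed identity quietly omits.
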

\begin{proof}
Over $\Z$,  this is a well-known result (see, for example, the proof in \cite[p. 344]{HaWr}).
Over $A$, one can prove the result in essentially the same way. We include the details for completeness.

Consider the prime factorization $h = u \ds\prod_{\ell | h} \ell^{\alpha}$ of $h$. Then
$$
\frac{\tau_A(h)}{|h|_{\infty}^{\varepsilon}}
=
\ds\prod_{\ell | h}
\frac{\alpha + 1}{|\ell|_{\infty}^{\alpha \varepsilon}}
=
\ds\prod_{\ell | h 
\atop{
|\ell|_{\infty} < 2^{\frac{1}{\varepsilon}}
}}
\frac{\alpha + 1}{|\ell|_{\infty}^{\alpha \varepsilon}}
\cdot
\ds\prod_{\ell | h 
\atop{
|\ell|_{\infty}  \geq 2^{\frac{1}{\varepsilon}}
}}
\frac{\alpha + 1}{|\ell|_{\infty}^{\alpha \varepsilon}}
\leq
\ds\prod_{\ell | h 
\atop{
|\ell|_{\infty} < 2^{\frac{1}{\varepsilon}}
}}
\frac{\alpha + 1}{|\ell|_{\infty}^{\alpha \varepsilon}}
\cdot
\ds\prod_{\ell | h 
\atop{
|\ell|_{\infty}  \geq 2^{\frac{1}{\varepsilon}}
}}
\frac{\alpha + 1}{2^{\alpha}}
\leq
\ds\prod_{\ell | h 
\atop{
|\ell|_{\infty} < 2^{\frac{1}{\varepsilon}}
}}
\frac{\alpha + 1}{|\ell|_{\infty}^{\alpha \varepsilon}}.
$$

Observe that
$$
\alpha \varepsilon \log 2 
\leq
\exp(\alpha \varepsilon \log 2)
=
2^{\alpha \varepsilon}
\leq
|\ell|_{\infty}^{\alpha \varepsilon},
$$
therefore
$$
\frac{
\alpha
}{
|\ell|_{\infty}^{\alpha \varepsilon}      
}
\leq
\frac{1}{\varepsilon \log 2}.
$$
For $\ell$ satisfying $|\ell|_{\infty} < 2^{\frac{1}{\varepsilon}}$, we thus obtain
$$
\frac{\alpha + 1}{|\ell|_{\infty}^{\alpha \varepsilon}}
\leq
\frac{1}{\varepsilon \log 2}
+
1
\leq
\exp\left(\frac{1}{\varepsilon \log 2}\right).
$$
This gives
$$
\ds\prod_{\ell | h 
\atop{
|\ell|_{\infty} < 2^{\frac{1}{\varepsilon}}
}}
\frac{\alpha + 1}{|\ell|_{\infty}^{\alpha \varepsilon}}
\leq
\exp\left(
\frac{1}{\varepsilon \log 2}
\cdot
\#\{\ell | h : |\ell|_{\infty} < 2^{\frac{1}{\varepsilon}}\}
\right)
\leq
\exp\left(
\frac{1}{\varepsilon \log 2}
\cdot
\frac{q^{2^{\frac{1}{\varepsilon}}} - 1}{q-1}
\right),
 $$
 a constant in $q$ and $\varepsilon$.
\end{proof}

Now let  us fix $a \in A \backslash \F_q$ and, as before, consider $F_a :=  F(\psi[a])$ and $J_a \subseteq F_a$ introduced in
part (c) of Theorem \ref{thmMain4}.
This field may also be  understood  by  considering the composition of $\bar{\rho}_{\psi, a}$ with the projection onto 
$\PGL_2(A/a A)$. 
Indeed, this composition leads to a Galois representation
$$
\hat{\rho}_{\psi, a} :
G_F \longrightarrow \PGL_2(A/a A)
$$
satisfying
$$
J_a = \left(F^{\text{sep}}\right)^{\Ker \hat{\rho}_{\psi, a}}.
$$
(Note that we have already considered the special case $\hat{\rho}_{\psi, T}$ in the proof of 
Theorem \ref{abhyankar-thm}.)

In what follows, we  recall some more properties of the extensions  $F_a/F$ and $J_a/F$:

\begin{theorem}\label{properties-division-fields}

\noindent
\begin{enumerate}
\item[(i)]
The degrees of the fields of constants of $F_a$ and $J_a$, that is,
$$
c_{F_a} := [F_a \cap \overline{\F}_q : \F_q],
$$
\begin{equation}\label{def-cJd}
c_{J_a} := [J_a \cap \overline{\F}_q : \F_q],
\end{equation}
are uniformly bounded from above in terms of $\psi$. That is,
$$
c_{J_a} \leq c_{F_a} \leq C(\psi)
$$
for some constant $C(\psi) \in \N \backslash \{0\}$.
\item[(ii)]
The genera $g_{F_a}$, $g_{J_a}$ of $F_a$, $J_a$ are bounded from above by
$$
g_{J_a} \leq g_{F_a} \leq  G(\psi) \; \# \GL_2(A/a A)\;  \deg a
$$ 
for some constant $G(\psi) \in \N \backslash \{0\}$.
\item[(iii)]
The degrees of $F_a/F$, $J_a/F$ are bounded from above by
$$
[F_a : F] \leq \#\GL_2(A/a A),
$$
$$
[J_a : F] \leq \#\PGL_2(A/a A).
$$
\item[(iv)]
Assume that $\End_{\overline{F}}(\psi) = A$. 
There exists $M(\psi) \in A^{(1)}$ such that, if   $(a, M(\psi)) = 1$, then
$$
\Gal(F_a/F) \simeq \GL_2(A/a A),
$$
$$
\Gal(J_a/F) \simeq \PGL_2 (A/ a A)
$$
and 
$$
c_{F_a} = c_{J_a} = 1;
$$
if $a$ arbitrary, then
$$
\frac{|a|_{\infty}^4}{\log \deg a + \log \log q}
\ll_{\psi}
[F_a : F]
\leq
|a|_{\infty}^4,
$$
$$
\frac{|a|_{\infty}^3}{\log \deg a + \log \log q}
\ll_{\psi}
[J_a : F]
\leq
|a|_{\infty}^3.
$$
\item[(v)]
Assume that $\End_{\overline{F}}(\psi) \neq A$. 
Then 
$$
\frac{|a|_{\infty}^2}{\log \deg a + \log \log q}
\ll_{\psi}
[F_a : F]
\ll_{\psi}
|a|_{\infty}^2,
$$
$$
\frac{|a|_{\infty}}{\log \deg a + \log \log q}
\ll_{\psi}
[J_a : F]
\ll_{\psi}
|a|_{\infty}.
$$
\item [(vi)]
For $x \in \N$, let
$$
\Pi_{1}(x, F_a/F):=
\#\{
\mathfrak{p} \in {\mathcal{P}}_{\psi} : p \nmid a, 
\deg \mathfrak{p} = x, 
\mathfrak{p} \; \text{splits completely in $F_a$}
\},
$$
$$
\Pi_{1}(x, J_a/F):=
\#\{
\mathfrak{p} \in {\mathcal{P}}_{\psi} : p \nmid a, 
\deg \mathfrak{p} = x,
\mathfrak{p} \; \text{splits completely in $J_a$}
\}.
$$
Then
$$
\Pi_{1}(x, F_a/F)
=
\frac{c_{F_a}(x)}{[F_a : F]} \cdot \frac{q^x}{x}
+
\O_{\psi}\left( \frac{q^{\frac{x}{2}}}{x} \deg a\right),
$$
$$
\Pi_{\hat{C}}(x, J_a/F)
=
\frac{c_{J_a}(x)}{[J_a : F]} \cdot \frac{q^x}{x}
+
\O_{\psi}\left( \frac{q^{\frac{x}{2}}}{x} \deg a\right),
$$
where
 $$
  c_{F_a}(x)
  :=
  \left\{ \begin{array}{cl}
 c_{F_a} & \textrm{if } c_{F_a} | x, \\
  0  &    \textrm{else},     \\
\end{array}
 \right. 
 $$
  $$
  c_{J_a}(x)
  :=
  \left\{ \begin{array}{cl}
 c_{J_a} & \textrm{if } c_{J_a} | x, \\
  0  &    \textrm{else}.     \\
\end{array}
 \right. 
$$

\item[(vii)]
Let $\bar{C}$, $\hat{C}$ be   conjugacy classes in $\Gal(F_a/F)$, respectively in $\Gal(J_a/F)$.
Denote by $a_{\bar{C}}$, $a_{\hat{C}}$ respectively, 
a positive integer such that, for any $\sigma \in \Gal(F_a/F)$, $\Gal(J_a/F)$ respectively,
  the restriction of $\sigma$ to 
$F_a \cap \overline{\F}_q$, $J_a \cap \overline{\F}_q$ respectively,
equals the corresponding restriction of $\tau^{a_{\bar{C}}}$,   $\tau^{a_{\hat{C}}}$ respectively.
For $x \in \N$, let
$$
\Pi_{\bar{C}}(x, F_a/F):=
\#\{
\mathfrak{p} \in {\mathcal{P}}_{\psi} : p \nmid a, 
\deg \mathfrak{p} = x, 
\sigma_{\mathfrak{p}} \subseteq \bar{C}
\},
$$
$$
\Pi_{\hat{C}}(x, J_a/F):=
\#\{
\mathfrak{p} \in {\mathcal{P}}_{\psi} : p \nmid a, 
\deg \mathfrak{p} = x,
\sigma_{\mathfrak{p}} \subseteq \hat{C}
\}.
$$
Then
$$
\Pi_{\bar{C}}(x, F_a/F)
=
\frac{c_{F_a}(x) \cdot  \#\bar{C}}{[F_a : F]} \cdot \frac{q^x}{x}
+
\O_{\psi}\left((\# \bar{C})^{\frac{1}{2}}  q^{\frac{x}{2}} \deg a\right),
$$
$$
\Pi_{\hat{C}}(x, J_a/F)
=
\frac{c_{J_a}(x)  \cdot \#\hat{C}}{[J_a : F]} \cdot \frac{q^x}{x}
+
\O_{\psi}\left((\# \hat{C})^{\frac{1}{2}}  q^{\frac{x}{2}} \deg a\right),
$$
where
 \begin{equation}\label{def-cFd-x} 
  c_{F_a}(x)
  :=
  \left\{ \begin{array}{cl}
 c_{F_a} & \textrm{if } c_{F_a} | x - a_{\bar{C}}, \\
  0  &    \textrm{else},     \\
\end{array}
 \right. \end{equation}
  \begin{equation} \label{def-cJd-x}
  c_{J_a}(x)
  :=
  \left\{ \begin{array}{cl}
 c_{J_a} & \textrm{if } c_{J_a} | x - a_{\hat{C}}, \\
  0  &    \textrm{else}.     \\
\end{array}
 \right. \end{equation}
 Note that this notation generalizes the one in part (vi).
 Moreover, note that, part (vii) holds also for unions of conjugacy classes.
 \end{enumerate}
\end{theorem}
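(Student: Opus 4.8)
The plan is to treat the seven parts as a coherent package, mostly deducing them from the known structure of the Galois representations attached to $\psi$ together with an effective Chebotarev Density Theorem for function fields. The backbone of the argument is the open image theorem of Pink--R\"utsche (and, in the CM case, the analogous statement via class field theory over the CM field $K$), which controls $\bar\rho_{\psi,a}(G_F)$ inside $\GL_2(A/aA)$, and the Weil bounds packaged in the explicit Chebotarev estimate of Murty--Scherk or Kumar Murty--Sethuraman for curves over finite fields. First I would set up notation: write $G(a):=\bar\rho_{\psi,a}(G_F)\le\GL_2(A/aA)$ and $\hat G(a)$ for its image in $\PGL_2(A/aA)$, and record that $F_a/F$ is Galois with group $G(a)$, that $J_a/F$ is Galois with group $\hat G(a)$, and that $c_{F_a}$ (resp.\ $c_{J_a}$) is the order of the image of $G(a)$ (resp.\ $\hat G(a)$) under the natural quotient to $\Gal(\overline{\F}_q/\F_q)$-type cyclotomic quotient, i.e.\ the degree of the constant field extension. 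Parts (i) and (iii) then follow immediately: $c_{F_a}$ divides the (bounded) index $[\GL_2(A/aA):G(a)]$-related cyclotomic piece, which by the open image theorem is bounded independently of $a$, giving $C(\psi)$; and $[F_a:F]=\#G(a)\le\#\GL_2(A/aA)$, $[J_a:F]=\#\hat G(a)\le\#\PGL_2(A/aA)$.

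Next I would handle the genus bound (ii) via the conductor-discriminant formula / Riemann--Hurwitz: the primes ramifying in $F_a/F$ are those dividing the conductor of $\psi$ (including $\infty$) together with those dividing $a$, and at each such prime the different exponent is bounded using that $\psi[a]\subseteq F^{\mathrm{sep}}$ and tame-plus-controlled-wild ramification at $a$; summing $2g_{F_a}-2 = [F_a:F](2g_F-2)+\deg(\mathfrak d_{F_a/F})$ and crudely bounding the different degree by $O_\psi(1)\cdot[F_a:F]\cdot\deg a$ gives the stated $G(\psi)\#\GL_2(A/aA)\deg a$; $g_{J_a}\le g_{F_a}$ since $J_a\subseteq F_a$. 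For (iv) and (v) I would invoke the open image theorem in the non-CM case to produce $M(\psi)$ with $G(a)=\GL_2(A/aA)$ and $c_{F_a}=c_{J_a}=1$ when $(a,M(\psi))=1$, and for general $a$ bound the index of $G(a)$ in $\GL_2(A/aA)$ by $O_\psi(1)$, then compute $\#\GL_2(A/aA)\asymp|a|_\infty^4$ and $\#\PGL_2(A/aA)\asymp|a|_\infty^3$ by the product formula over prime powers dividing $a$ — the $\log\deg a+\log\log q$ in the denominator is exactly the slack one loses in $\prod_{\ell\mid a}(1-|\ell|_\infty^{-1})$-type Euler factors, controlled by Lemma \ref{lemma-tau}; the CM case (v) is the same computation but now $G(a)$ lies in a Cartan-type subgroup of size $\asymp|a|_\infty^2$ (image $\asymp|a|_\infty$ in $\PGL_2$), using that $\End_{\overline F}(\psi)\otimes_A F$ is an imaginary quadratic extension and the corresponding $\fl$-adic images are open in the normalizer of a torus.

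Finally, parts (vi) and (vii) are the effective Chebotarev step, and this is where the real work lies. I would apply the explicit Chebotarev Density Theorem for the (possibly geometrically disconnected) cover $F_a/F$: for a union of conjugacy classes $\bar C$ stable under the relevant constant-field twist, the number of degree-$x$ primes of $F$ with Frobenius in $\bar C$ and unramified (which excludes $p\mid a$ and the bad primes of $\psi$, only $O_\psi(\deg a)$ of them) equals $\frac{c_{F_a}(x)\#\bar C}{[F_a:F]}\cdot\frac{q^x}{x}+O\big((\#\bar C)^{1/2}q^{x/2}\deg a\big)$, where the error term's $q^{x/2}$ comes from the Weil bound on the relevant $L$-functions and the factors $(\#\bar C)^{1/2}$ and $\deg a$ come respectively from the number of irreducible constituents entering the character sum and from the genus bound (ii) governing the degree of the $L$-functions — here one must be careful that $g_{F_a}$ enters linearly in the error, which is why (ii) was proved first. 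Specializing to $\bar C=\{1\}$ gives (vi). The \textbf{main obstacle} is bookkeeping the constant-field extension correctly: because $F_a$ may not be geometrically connected over $F$, one must pass to $F_a\cdot\overline{\F}_q$, track how $\Frob_\fp$ acts on $F_a\cap\overline{\F}_q$ (this is the role of $a_{\bar C}$ and the congruence condition $c_{F_a}\mid x-a_{\bar C}$), and only then apply Weil/Lang--Weil; getting the $c_{F_a}(x)$ factor and the error term $(\#\bar C)^{1/2}q^{x/2}\deg a$ simultaneously uniform in $a$ requires the effective version of Chebotarev over function fields in the form where the genus dependence is made explicit, which I would cite rather than reprove.
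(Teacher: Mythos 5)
Your proposal matches the paper's overall architecture: the paper, too, proves this theorem as a compendium of citations (Pink--R\"utsche for the open-image input behind (iv)--(v), Murty--Scherk for the effective Chebotarev estimate in (vi)--(vii), with Murty--Murty--Saradha for the union-of-classes variant, and Gardeyn for the genus bound), with the only portion it proves in the body being that $c_{F_a}=c_{J_a}=1$ when $\Gal(F_a/F)\cong\GL_2(A/aA)$ (Proposition \ref{propGeomExt}, via the commutator structure of $\GL_r$ and Hayes' result that the determinant Drinfeld module has geometric division fields). Your treatments of (ii)--(vii) are in the same spirit and would fill in what those citations contain; in particular your Riemann--Hurwitz sketch for (ii) is a reasonable substitute for the Gardeyn reference, and your bookkeeping of the constant-field Frobenius in (vii) correctly identifies the role of $a_{\bar C}$ and the congruence $c_{F_a}\mid x-a_{\bar C}$.

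There is, however, one genuine gap: in part (i) you assert that $c_{F_a}$ is ``bounded because the index $[\GL_2(A/aA):\bar\rho_{\psi,a}(G_F)]$ is bounded by the open image theorem.'' That implication does not hold as stated. The degree $c_{F_a}$ of the constant-field extension is the order of a cyclic quotient of $\Gal(F_a/F)$, and a subgroup of bounded index in $\GL_2(A/aA)$ can still have cyclic quotients of unbounded order, so a bound on the index does not by itself bound $c_{F_a}$. (Even in the full-image case one does not get $c_{F_a}=1$ for free; the paper needs Proposition \ref{propGeomExt}, which relies on the commutator $\SL_2$ and on Hayes' theorem for the rank-$1$ determinant module, and that argument is special to the full-image situation and does not extend to proper subgroups.) The paper handles the general-$a$ statement in (i) by citing Goss, Remark 7.1.9, which directly bounds the constant-field extension in division fields of Drinfeld modules. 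You would either need to reproduce an argument of that type or cite it; the open-image theorem alone does not yield (i).
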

\begin{proof}
For part (i), see \cite[Remark 7.1.9]{Go}. 
For part (ii), see \cite[Cor. 7]{Ga}.
Part (iii) follows from the injectivity of the residual representations $\Gal (F_a/F) \longrightarrow \GL_2(A/aA)$
and $\Gal (J_a/F) \longrightarrow \PGL_2(A/aA)$. 
The claims about $\Gal (F_a/F)$, $\Gal (J_a/F)$, $[F_a:F]$, and $[J_a:F]$ in parts (iv) and (v) can 
be derived from the main results of \cite{PiRu}, as explained in 
\cite[Section 3.6]{CoSh}. The fact that $c_{F_a} = c_{J_a} = 1$ then follows from Proposition \ref{propGeomExt}. 
Parts (vi) and 
(vii) are applications of the effective Chebotarev Density Theorem of \cite{MuSc}, 
 as well as of the prior parts of Theorem \ref{properties-division-fields}; see \cite[Section 4]{CoDa},
  \cite[Section 4]{CoSh} for more details.
 That part (vii) holds also for unions of conjugacy classes  can be seen by 
modifying the proof in \cite{MuSc} by using the techniques 
of \cite[Section 3]{MuMuSa}.
\end{proof}

\begin{proposition}\label{propGeomExt}
Let $A=\F_q[T]$ and $F=\F_q(T)$. Let $\psi: A\to F\{\tau\}$ be a Drinfeld module of rank $r$ defined over $F$. 
Assume $\psi$ has good reduction at the primes dividing $a\in A$ and 
$\Gal(F_a/F)\cong \GL_r(A/aA)$. In addition, if $r=2$, assume $q$ is odd. 
Under these assumptions, the extension $F_a$ of $F$ is geometric, i.e., $\F_q$ is algebraically closed in $F_a$.  
\end{proposition}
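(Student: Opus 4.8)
The plan is to prove that the field of constants $F_a\cap\overline{\F}_q$ of $F_a=F(\psi[a])$ equals $\F_q$; equivalently, that the normal subgroup $H:=\Gal\bigl(F_a/F\cdot(F_a\cap\overline{\F}_q)\bigr)$ of $\Gal(F_a/F)\cong\GL_r(A/aA)$, whose index is $[F_a\cap\overline{\F}_q:\F_q]$, is all of $\GL_r(A/aA)$. The first step is to describe $H$ via ramification: since the only connected unramified covers of $\mathbb{P}^1_{\F_q}$ are the constant-field extensions (so $\pi_1(\mathbb{P}^1_{\F_q})\cong\hat{\mathbb Z}$), the maximal subextension of $F_a/F$ that is unramified at every place of $F$ is exactly $F\cdot(F_a\cap\overline{\F}_q)$; hence $H$ is generated by all the inertia subgroups $I_v$ of $\Gal(F_a/F)$, $v$ running over the places of $F$, and $\Gal(F_a/F)/H$ is cyclic. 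In particular $H\supseteq I_\ell$ for every monic irreducible $\ell\mid a$.

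The second step is to compute the image of $I_\ell$ under the determinant $\det\colon\GL_r(A/aA)\to(A/aA)^\times$, for $\ell\mid a$. By the theory of exterior powers of Drinfeld modules (the analogue of the Weil pairing), $\det\circ\bar\rho_{\psi,a}=\bar\rho_{D,a}$ for a rank-$1$ Drinfeld $A$-module $D$ over $F$ whose leading coefficient is an $\F_q^\times$-multiple of the leading coefficient $g_r$ of $\psi_T$; in particular $D$ has good reduction wherever $\psi$ does. Writing $a=\ell^{n}a'$ with $\ell\nmid a'$, the group $\det I_\ell$ is the inertia subgroup of $\ell$ in $\Gal(F(D[a])/F)$. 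Now $F(D[a'])/F$ is unramified at $\ell$, while $F_\fl(D[\ell^{n}])/F_\fl$ (with $\fl=\ell A$, $F_\fl$ the completion) is \emph{totally} ramified: $D$ has good reduction at $\ell$, and its reduction, being a rank-$1$ Drinfeld module over the field $A/\ell A$ of $A$-characteristic $\ell$, has maximal height and hence no nonzero $\ell$-torsion, so all of $D[\ell^{n}]$ is connected over $F_\fl$. Therefore $\det I_\ell=\bar\rho_{D,\ell^{n}}(G_{F_\fl})$, which by the local (Lubin–Tate / Carlitz) theory of rank-$1$ Drinfeld modules is the full local unit group $(A/\ell^{n}A)^\times$ — precisely the $\ell$-primary factor of $(A/aA)^\times$ under the Chinese Remainder Theorem.

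Combining the two steps, $\det H\supseteq\prod_{\ell\mid a}\det I_\ell=\prod_{\ell\mid a}(A/\ell^{v_\ell(a)}A)^\times=(A/aA)^\times$, so $\det\colon H\to(A/aA)^\times$ is surjective. Since $H$ is normal in $\GL_r(A/aA)$ with cyclic (hence abelian) quotient, it contains the commutator subgroup; and the hypotheses ($r\ge 3$, or $r=2$ and $q$ odd — the latter precisely to avoid a $\GL_2(\F_2)$-factor, for which the commutator subgroup is strictly smaller than $\SL_2$) ensure that $[\GL_r(A/aA),\GL_r(A/aA)]=\SL_r(A/aA)$, so $\det$ induces an isomorphism $\GL_r(A/aA)/\SL_r(A/aA)\xrightarrow{\ \sim\ }(A/aA)^\times$. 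Hence $H=\det^{-1}(\det H)=\GL_r(A/aA)$, i.e. $F_a\cap\overline{\F}_q=\F_q$. The hypothesis $\Gal(F_a/F)\cong\GL_r(A/aA)$ is used only here, and the good-reduction hypothesis only in the previous step.

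The part that does real work is the local computation of $\det I_\ell$ — that the $\ell$-power torsion of a good-reduction rank-$1$ Drinfeld module over $F_\fl$ generates a totally ramified extension realizing all of $(A/\ell^{n}A)^\times$. For $A=\F_q[T]$ one may, as an alternative to the local Lubin–Tate formalism, deduce this from the explicit reciprocity law for the Carlitz cyclotomic extension together with the formula for $\det\bar\rho_{\psi,a}(\Frob_{\mathfrak q})$, which differs from the monic generator of $\mathfrak q$ modulo $a$ only by an $\F_q^\times$-valued factor depending on the reduction of $g_r$ and is therefore unramified at every $\ell\mid a$. Everything else is formal.
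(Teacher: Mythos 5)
Your proof is correct, and it is essentially the same strategy as the paper's: reduce via the determinant character (i.e.\ the top exterior power $\bigwedge^r\psi$, which is a rank-$1$ Drinfeld module by van der Heiden) to the rank-$1$ case, and observe that the commutator of $\GL_r(A/aA)$ is $\SL_r(A/aA)$ except when $r=2$, $q=2$, so any abelian quotient factors through $\det$. The one genuine difference is in how the rank-$1$ input is supplied. The paper passes to the fixed field $K$ of $\SL_r(A/aA)$, identifies $K=F(\phi[a])$ for the rank-$1$ module $\phi=\bigwedge^r\psi$, notes that the constant-field subextension sits inside $K$, and then quotes Hayes, Proposition 5.2, for the fact that $F(\phi[a])/F$ is geometric. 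You instead work dually with the subgroup generated by inertia and re-derive that rank-$1$ input locally: good reduction forces all of $D[\ell^n]$ into the formal module over $F_\fl$, which is a Lubin--Tate formal $A_\fl$-module of height $1$, so the $\ell$-primary torsion extension is totally ramified with Galois group the full unit group $(A/\ell^n A)^\times$; summing over $\ell\mid a$ gives $\det H=(A/aA)^\times$. This makes the rank-$1$ step self-contained at the cost of invoking local formal-group theory, whereas the paper is shorter by leaning on Hayes's global explicit class field theory for the Carlitz-type modules. Both proofs need the commutator computation — and hence the same $r=2,\ q=2$ exclusion — for exactly the same reason.

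One small point worth flagging: when you write ``$\det I_\ell=\bar\rho_{D,\ell^n}(G_{F_\fl})$,'' this equality of the decomposition-group image with the inertia image is only because you have already shown the local extension totally ramified, so there is no circularity, but the order of the two claims should be swapped for clarity (total ramification first, then identification with the decomposition-group image). Also, your observation that the leading coefficient of $D_T$ is an $\F_q^\times$-multiple of $g_r$ is correct and is exactly what guarantees $D$ inherits good reduction from $\psi$; the paper makes the same point in one line (``Since $\psi$ has good reduction at the primes dividing $a$, the same is true for $\phi$'').
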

\begin{proof}
Let $aA=\prod_{i=1}^m \fp_i^{s_i}$ be the prime decomposition of the ideal $aA$. Since there is an isomorphism of groups
$$
\GL_r(A/aA)\cong \prod_{i=1}^m \GL_r(A/\fp_i^{s_i}), 
$$
the commutator of $\GL_r(A/aA)$ is the direct product of the commutators of $\GL_r(A/\fp_i^{s_i})$. On the other hand, 
since the set of nonunits in $A/\fp_i^{s_i}$ forms an ideal, 
according to \cite{Litoff} we have 
$$
[\GL_r(A/\fp_i^{s_i}), \GL_r(A/\fp_i^{s_i})]=\SL_r(A/\fp_i^{s_i}). 
$$
(Here we implicitly use the assumption that if $r=2$, then $q$ is odd.) This implies that 
\begin{equation}\label{eqpropGeom1}
[\GL_r(A/aA), \GL_r(A/aA)]=\SL_r(A/aA).
\end{equation}
We also have the exact sequence
\begin{equation}\label{eqpropGeom2}
0\to \SL_r(A/aA)\to \GL_r(A/aA)\xrightarrow{\det} (A/aA)^\times\to 0. 
\end{equation}

By assumption, $\Gal(F_a/F)\cong \GL_r(A/aA)$. Let $K$ be the subfield of $F_a$ fixed by $\SL_r(A/aA)$. 
Let $\F$ be the algebraic closure of $\F_q$ in $F_a$, and let $F'=\F F$. The extension $F'/F$ 
is Galois with Galois group isomorphic to $\Gal(\F/\F_q)$; in particular, it is cyclic. Due to (\ref{eqpropGeom1}), 
the field $F'$ must be a subfield of $K$, as $\Gal(F'/F)$ is a quotient group of $\Gal(F_a/F)$ which is abelian. 
Thus, it is enough to show that $K/F$ is geometric. 

There exists a Drinfeld $A$-module $\phi$ of rank-$1$ defined over $F$ such that 
there is an isomorphism of $\Gal(F^\mathrm{sep}/F)$-modules (cf. \cite{He})
$$
\phi[a]\cong \bigwedge^r \psi[a].
$$
Thus, $F(\phi[a])$ is the subfield of $F_a$ fixed by the kernel of the determinant on $\GL_r(A/aA)$. Therefore, 
due to (\ref{eqpropGeom2}), $K=F(\phi[a])$ and $\Gal(F(\phi[a])/F)\cong (A/aA)^\times$. 
Since $\psi$ has good reduction at the primes dividing $a$, the same is true for $\phi$. Finally, by Proposition 5.2 in \cite{HayesTAMS}, 
$F(\phi[a])/F$ is geometric. 
\end{proof}

\begin{remark}
In general, a composition of geometric extensions need not be geometric, so in the previous proof we cannot 
immediately reduce to the case when $aA=\fp^s$. 
\end{remark}

\subsection{Proof of Part (a) of Theorem \ref{bp-one}
}

Let
\begin{equation}\label{defB}
{\mathcal{B}}(\psi, x)
:=
\#\left\{
\mathfrak{p} \in {\mathcal{P}}_{\psi} :
\deg \mathfrak{p} = x,
E_{\psi, \mathfrak{p}}
=
A[\pi_{\fp}(\psi)]
\right\}.
\end{equation}
Our goal is to derive an explicit asymptotic formula for ${\mathcal{B}}(\psi, x)$, when $q$ is fixed and
$x \rightarrow \infty$.
We start with the simple remarks that
\begin{eqnarray*}
{\mathcal{B}}(\psi, x)
&=&
\#\left\{
\mathfrak{p} \in {\mathcal{P}}_{\psi} :
\deg \mathfrak{p} = x,
b_{\fp}(\psi) = 1
\right\}
\\
&=&
\#\left\{
\mathfrak{p} \in {\mathcal{P}}_{\psi} :
\deg \mathfrak{p} = x,
\ell \nmid b_{\fp}(\psi) \; \forall \ell \in A^{(1)}
\right\}
\\
&=&
\ds\sum_{m \in A^{(1)}}
\mu_A(m)
\#\left\{
\mathfrak{p} \in {\mathcal{P}}_{\psi} :
\deg \mathfrak{p} = x,
m \mid b_{\fp}(\psi) 
\right\},
\end{eqnarray*} 
where  in the first line we used (\ref{bp-measure}).

An essential aspect in the asymptotic  study of such sums  is that of determining the range of the 
polynomial $m \in A^{(1)}$ under summation as a function of $x$. By combining the property
$m \mid b_{\fp}(\psi)$
with (\ref{ap-bp-generators}),
we obtain
$$
m^2 \mid a_{\fp}(\psi)^2 - 4 u_{\fp}(\psi) p.
$$
Upon recalling (\ref{RH})
and using that $\deg \mathfrak{p} = x$, we deduce that
$\deg m \leq \frac{x}{2}$.
Thus
$$
{\mathcal{B}}(\psi, x)
=
\ds\sum_{
m \in A^{(1)}
\atop{\deg m \leq \frac{x}{2}}
}
\mu_A(m)
\#\left\{
\mathfrak{p} \in {\mathcal{P}}_{\psi} :
\deg \mathfrak{p} = x,
m \mid b_{\fp}(\psi) 
\right\}.
$$

By Theorem \ref{global-artin},   the extension $J_m/F$ 
has the property that, for 
any  $\mathfrak{p} = p A \in {\mathcal{P}}_{\psi}$ with $(p, m) = 1$,
\begin{equation}\label{eqb_psplit}
m \mid b_{\fp}(\psi)
\text{ if and only if $\fp$ splits completely in $J_m$}.
\end{equation}
(Note that, if $\deg \mathfrak{p} = x$ and $\deg m \leq \frac{x}{2}$, then the generator $p$ of $\mathfrak{p}$ is coprime
 with $m$; hence $\mathfrak{p}$ is unramified in $J_m$.)
Consequently, we can write
\begin{eqnarray}\label{B-split}
{\mathcal{B}}(\psi, x)
&=&
\ds\sum_{
m \in A^{(1)}
\atop{\deg m \leq y}
}
\mu_A(m) \Pi_1(x, J_m/F)
+
\ds\sum_{
m \in A^{(1)}
\atop{
y < \deg m \leq \frac{x}{2}
}
}
\mu_A(m) 
\#\left\{\mathfrak{p} \in {\mathcal{P}}_{\psi} :
\deg \mathfrak{p} = x, m \mid b_{\fp}(\psi)
\right\},
\end{eqnarray}
where
$y = y(x)$ is a parameter to be chosen optimally later as a function of $q$ and $x$, and
$$
\Pi_{1}(x, J_m/F) :=
\#
\left\{
\mathfrak{p} \in {\mathcal{P}}_{\psi} :
(p, m) = 1,
\deg \mathfrak{p} = x,
\mathfrak{p} \; \text{splits completely in $J_m/F$}
\right\}.
$$

The  splitting of ${\mathcal{B}}(\psi, x)$ in two sums is guided by the natural  strategy of using an effective version of the Chebotarev Density Theorem, and by the limitation of this tool for  our problem. In particular, the Chebotarev Density Theorem can be used only for estimating the first sum on the right-hand side of ${\mathcal{B}}(\psi, x)$ above, while other methods must be developed to estimate the remaining sum. These latter methods constitute the heart  of the proof.

\subsubsection{The main term of ${\mathcal{B}}(\psi, x)$}

For $y = y(x) \leq \frac{x}{2}$ a parameter, we  focus on
$$
{\mathcal{B}}_1(\psi, x, y) :=
\ds\sum_{
m \in A^{(1)}
\atop{
\deg m \leq y
}
}
\mu_A(m) \Pi_1(x, J_m/F).
$$ 

By part (vi) of Theorem \ref{properties-division-fields}, this becomes
\begin{eqnarray*}
{\mathcal{B}}_1(\psi, x, y)
&=&
\ds\sum_{
m \in A^{(1)}
\atop{
\deg m \leq y
}
}
\frac{\mu_A(m) c_{J_m}(x)}{[J_m : F]}
\cdot
\frac{q^x}{x}
+
\O_{\psi}
\left(
\ds\sum_{
m \in A^{(1)} \; \text{squarefree}
\atop{
\deg m \leq y
}
}
\frac{q^{\frac{x}{2}}}{x} \deg m
\right)
\\
&=&
\ds\sum_{m \in A^{(1)}}
\frac{\mu_A(m) c_{J_m}(x)}{[J_m : F]}
\cdot
\frac{q^x}{x}
-
\ds\sum_{
m \in A^{(1)}
\atop{
\deg m > y
}
}
\frac{\mu_A(m) c_{J_m}(x)}{[J_m : F]}
\cdot
\frac{q^x}{x}
+
\O_{\psi}\left(q^{\frac{x}{2} + y}\right),
\end{eqnarray*}
where, in the last line we used part (ii) of Lemma \ref{lemma1}.

To estimate the middle term, we use parts (i) and (iv) of Theorem \ref{properties-division-fields}, as well as Lemma \ref{lemma2},
 and obtain 
$$
\ds\sum_{
m \in A^{(1)}
\atop{
\deg m > y
}
}
\frac{\mu_A(m) c_{J_m}(x)}{[J_m : F]}
\ll_{\psi}
\ds\sum_{
m \in A^{(1)} \; \text{squarefree}
\atop{
\deg m > y
}
}
\frac{\log \deg m + \log \log q}{q^{3 \deg m}}
\ll
\frac{\log y}{q^{2 y} \log q}.
$$

In summary,
\begin{equation}\label{B1}
{\mathcal{B}}_1(\psi, x, y)
=
\ds\sum_{m \in A^{(1)}}
\frac{\mu_A(m) c_{J_m}(x)}{[J_m : F]} \cdot \frac{q^x}{x} 
+
\O_{\psi}\left(q^{\frac{x}{2} + y}\right)
+
\O_{\psi}\left(q^{x - 2y}\right).
\end{equation}

\subsubsection{The error term of ${\mathcal{B}}(\psi, x)$}

For $y = y(x) \leq \frac{x}{2}$,  we   focus on obtaining an upper bound for 
\begin{equation*}
{\mathcal{B}}_{2}(\psi, x, y) 
:=
\ds\sum_{
m \in A^{(1)}
\atop{y < \deg m \leq \frac{x}{2}}
}
\mu_A(m) 
\#\left\{\mathfrak{p} \in {\mathcal{P}}_{\psi} : \deg \mathfrak{p} = x, m \mid b_{\fp}(\psi)\right\}.
\end{equation*}
By (\ref{ap-bp-generators}),
$$
m \mid b_{\fp}(\psi)
\; \; \Rightarrow \;  \; 
m^2 \mid \left(a_{\fp}(\psi)^2 - 4 u_{\fp}(\psi) p\right).
$$
Thus there exist $f, g \in A$ with $g$ squarefree such that
$$
a_{\fp}(\psi)^2 - 4 u_{\fp}(\psi) p = m^2 f^2 g.
$$
Upon relabeling 
$h := m f$,
we obtain that
$$
{\mathcal{B}}_2(\psi, x, y) 
\leq
\ds\sum_{
h \in A
\atop{
y < \deg h \leq \frac{x}{2}
}
}
\tau_A(h)
\#\left\{
\mathfrak{p} \in {\mathcal{P}}_{\psi}:
\deg \mathfrak{p} = x,
\;
\text{$\exists g \in A$ squarefree  such that $a_{\fp}(\psi)^2 - 4 u_{\fp}(\psi) p = h^2 g$}
\right\}.
$$

The above range for $\deg h$ is determined simply from 
$$
\deg h = \deg m + \deg f,
$$
hence from
$$
\deg h \geq \deg m > y,
$$
and also from
$$
h^2 \mid \left(a_{\fp}(\psi)^2 - 4 u_{\fp}(\psi) p\right),
$$
hence from
$$
2 \deg h \leq \deg p = x,
$$
after recalling (\ref{RH}).

Using Lemma \ref{lemma-tau}, we deduce  that
\begin{equation*}
{\mathcal{B}}_2(\psi, x, y)
\ll_{\varepsilon}
q^{\varepsilon x}
\ds\sum_{
h \in A
\atop{y < \deg h \leq \frac{x}{2}}
}
\#\left\{
\mathfrak{p} \in {\mathcal{P}}_{\psi} :
\deg \mathfrak{p} = x,
\;
\text{$\exists g \in A$ squarefree such that $a_{\fp}(\psi)^2 - 4 u_{\fp}(\psi) p = h^2 g$}
\right\}.
\end{equation*}

Note that the factorization
$
a_{\fp}(\psi)^2 - 4 u_{\fp}(\psi) p = h^2 g
$
is unique up to the multiplication of $g$ by a square in $\F_q^{\times}$.
As such, 
\begin{eqnarray*}
{\mathcal{B}}_2(\psi, x, y)
&\ll_{\varepsilon}&
q^{x \varepsilon}
\ds\sum_{
g \in A \; \text{squarefree}
\atop{\deg g < x - 2 y}
}
\#\left\{
\mathfrak{p} \in {\mathcal{P}}_{\psi} :
\deg \mathfrak{p} = x,
g \left(a_{\fp}(\psi)^2 - 4 u_{\fp}(\psi) p\right) \;
\text{is a square in $A$}
\right\}
\\
&=:&
q^{x \varepsilon}
\ds\sum_{
g \in A \; \text{squarefree}
\atop{
\deg g < x - 2 y
}
}
S_g(\psi).
\end{eqnarray*}
The range of $\deg g$ above is obtained once again using (\ref{RH}):
$$
2 \deg h + \deg g \leq x \; \Rightarrow \; \deg g \leq x - 2 \deg h < x - 2 y.
$$

To estimate $S_g(\psi)$ we  rely on the function field  analogue of the Square Sieve  proven in 
\cite[Section 7]{CoDa} and on part (vii) of Theorem \ref{properties-division-fields}. Specifically, we use the resulting bound
\begin{equation}\label{square-sieve-application}
S_{g}(\psi)
\ll
q^{\frac{7 x}{8}}
(x + \deg g)
+
q^{\frac{3 x}{4}} x \left(x + \deg g\right)^2
\end{equation}
(which we will prove shortly)
and  deduce that
\begin{eqnarray}\label{B2}
{\mathcal{B}}_{2}(\psi, x, y)
&\ll_{\psi, \varepsilon}&
q^{
\frac{15 x}{8}
- 2 y 
+
x \varepsilon
}
x^3. 
\end{eqnarray}

Now let us prove (\ref{square-sieve-application}); our arguments use tools from  \cite[Sections 7,  8]{CoDa} 
and are included in detail for completeness. We recall the Square Sieve for $A$:

\begin{theorem}\label{square-sieve}
Let ${\mathcal{A}}$ be a finite multiset of  non-zero elements of $A$.
Let ${\mathcal{P}}$ be a finite set of  primes of $A$. Let
$$
S({\mathcal{A}}) := \{a \in {\mathcal{A}}: a = b^2 \; \text{for some $b \in A$} \},
$$
and for any $a \in A$ define
$$
\nu_{{\mathcal{P}}}(a) := \#\{\ell \in {\mathcal{P}}:  \ell \mid a\}.
$$
Then
\begin{eqnarray*}
\#S({\mathcal{A}})
&\leq&
 \frac{\#{\mathcal{A}}}{\#{\mathcal{P}}} 
 + 
 \max_{
\ell_1, \ell_2 \in {\mathcal{P}}
 \atop 
 \ell_1 \neq \ell_2 } 
 \left| 
 \ds\sum_{a \in {\mathcal{A}}}
\left(\frac{a}{\ell_1}\right)
\left(\frac{a}{\ell_2}\right) 
\right|
\\
&+& 
\frac{2}{\#{\mathcal{P}}}
 \ds\sum_{a \in {\mathcal{A}}}
\nu_{{\mathcal{P}}}(a) 
+ 
\frac{1}{(\#{\mathcal{P}})^2} \ds\sum_{a \in{\mathcal{A}}} \nu_{{\mathcal{P}}}(a)^2.
\end{eqnarray*}
\end{theorem}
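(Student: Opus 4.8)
The plan is to run Heath-Brown's square sieve in the function-field setting, exploiting the quadratic residue symbol $\left(\frac{\,\cdot\,}{\ell}\right)\colon A\to\{0,\pm1\}$ attached to a prime $\ell$ of $A=\F_q[T]$, normalized so that $\left(\frac{a}{\ell}\right)=0$ precisely when $\ell\mid a$ (here $q$ is odd, as in our standing setting). The only arithmetic input needed is the elementary remark that if $a=b^2$ is a square then $\left(\frac{a}{\ell}\right)=\left(\frac{b}{\ell}\right)^2$ equals $1$ for $\ell\nmid a$ and $0$ otherwise; hence, setting $w(a):=\sum_{\ell\in\mathcal{P}}\left(\frac{a}{\ell}\right)$, we get $w(a)=\#\mathcal{P}-\nu_{\mathcal{P}}(a)$ for every $a\in S(\mathcal{A})$, whereas $|w(a)|\le\#\mathcal{P}$ for all $a$. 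Everything then follows by bounding the nonnegative quantity $\Sigma:=\sum_{a\in\mathcal{A}}w(a)^2$ (all sums over the multiset $\mathcal{A}$ taken with multiplicity) from above and below.

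For the upper bound I would expand
$$
\Sigma=\sum_{a\in\mathcal{A}}\ \sum_{\ell_1,\ell_2\in\mathcal{P}}\left(\frac{a}{\ell_1}\right)\left(\frac{a}{\ell_2}\right)
$$
and split off the diagonal $\ell_1=\ell_2$. Since $\left(\frac{a}{\ell}\right)^2=1$ exactly when $\ell\nmid a$, the diagonal equals $\sum_{\ell\in\mathcal{P}}\#\{a\in\mathcal{A}:\ell\nmid a\}=\#\mathcal{A}\,\#\mathcal{P}-\sum_{a\in\mathcal{A}}\nu_{\mathcal{P}}(a)\le\#\mathcal{A}\,\#\mathcal{P}$, while the off-diagonal is a sum of fewer than $(\#\mathcal{P})^2$ terms, each of absolute value at most $\max_{\ell_1\ne\ell_2}\left|\sum_{a\in\mathcal{A}}\left(\frac{a}{\ell_1}\right)\left(\frac{a}{\ell_2}\right)\right|$. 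This yields $\Sigma\le\#\mathcal{A}\,\#\mathcal{P}+(\#\mathcal{P})^2\max_{\ell_1\ne\ell_2}\left|\sum_{a\in\mathcal{A}}\left(\frac{a}{\ell_1}\right)\left(\frac{a}{\ell_2}\right)\right|$.

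For the lower bound I would discard every $a\notin S(\mathcal{A})$ (legitimate, since each summand of $\Sigma$ is a square) and, on the remaining terms, use $w(a)^2=(\#\mathcal{P}-\nu_{\mathcal{P}}(a))^2=(\#\mathcal{P})^2-2\#\mathcal{P}\,\nu_{\mathcal{P}}(a)+\nu_{\mathcal{P}}(a)^2\ge(\#\mathcal{P})^2-2\#\mathcal{P}\,\nu_{\mathcal{P}}(a)-\nu_{\mathcal{P}}(a)^2$; summing and then enlarging the range of the two error sums from $S(\mathcal{A})$ to $\mathcal{A}$ gives $\Sigma\ge(\#\mathcal{P})^2\#S(\mathcal{A})-2\#\mathcal{P}\sum_{a\in\mathcal{A}}\nu_{\mathcal{P}}(a)-\sum_{a\in\mathcal{A}}\nu_{\mathcal{P}}(a)^2$. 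Comparing the two bounds on $\Sigma$ and dividing by $(\#\mathcal{P})^2$ produces precisely the asserted inequality. I do not expect a genuine obstacle inside this argument — it is a second-moment estimate of the same flavor as the large sieve — and the only point requiring mild care is keeping track of the $a$ divisible by some prime of $\mathcal{P}$ (where the symbol vanishes), which is exactly what produces the $\nu_{\mathcal{P}}(a)$ and $\nu_{\mathcal{P}}(a)^2$ terms. The genuinely hard analytic work, namely estimating the cross sums $\sum_{a\in\mathcal{A}}\left(\frac{a}{\ell_1}\right)\left(\frac{a}{\ell_2}\right)$ by Weil's bounds for the concrete multiset $\mathcal{A}$ coming from $a_{\fp}(\psi)^2-4u_{\fp}(\psi)p$, is deferred to the application of the theorem and is not part of its proof.
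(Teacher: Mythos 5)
Your argument is correct, and it is exactly the standard Heath-Brown square sieve transplanted to $A=\F_q[T]$: set $w(a)=\sum_{\ell\in\mathcal{P}}\left(\frac{a}{\ell}\right)$, observe $w(a)=\#\mathcal{P}-\nu_{\mathcal{P}}(a)$ when $a$ is a square (using that $q$ is odd so the quadratic residue symbol makes sense), and then sandwich $\sum_{a\in\mathcal{A}}w(a)^2$ between the lower bound obtained by discarding non-squares and the upper bound obtained by splitting the double sum over $(\ell_1,\ell_2)$ into diagonal and off-diagonal parts. All the steps — dropping the positive $\nu_{\mathcal{P}}(a)^2$ contribution (or, as you do, bounding it below by $-\nu_{\mathcal{P}}(a)^2$ to get the stated form), enlarging the error sums from $S(\mathcal{A})$ to $\mathcal{A}$, bounding the off-diagonal by $(\#\mathcal{P})^2$ times the max, and dividing through by $(\#\mathcal{P})^2$ — are sound and do produce exactly the claimed inequality. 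For the record, the paper does not actually prove Theorem~\ref{square-sieve}; it recalls the statement and cites Cojocaru--David [CD08, Section~7], where the proof proceeds by precisely this second-moment method, so your argument is essentially the one the paper is relying on.
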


 We apply Theorem \ref{square-sieve} in the setting
$$
{\mathcal{A}} := 
\left\{ 
g \left( a_{\fp}(\psi)^2 -  4 u_{\fp}(\psi) p\right) : 
\mathfrak{p} \in  {\mathcal{P}}_{\psi}, \; \deg \mathfrak{p} = x 
\right\}
$$
and
$$
{\mathcal{P}} := \left\{ {\ell} \in A: {\ell} \; \text{prime},
\; \deg {\ell} = \theta \right\}
$$
for some parameter $\theta = \theta(x) <  x$, to be chosen
optimally later.

We obtain
\begin{eqnarray}\label{square-sieve-applied}
S_g(\psi)
&\leq& 
\frac{\#{\mathcal{A}}}{\#{\mathcal{P}}} 
+
 \max_{
{\ell}_1, {\ell}_2 \in {\mathcal{P}}
 \atop {\ell}_1 \neq {\ell}_2 
} 
\left|
 {\ds\sum_{ \mathfrak{p} \in {\mathcal{P}}_{\psi}
\atop \deg \mathfrak{p} = x }
}
\left( 
\frac{ 
g
\left(
a_{\fp}(\psi)^2 - 4 u_{\fp}(\psi) p 
\right)
}{
{\ell}_1 }
\right)
\left( 
\frac{ 
g
\left(
a_{\fp}(\psi)^2 - 4 u_{\fp}(\psi) p 
\right)
}{
{\ell}_2 }
\right)
\right|
\nonumber
\\
&+& 
\frac{
2
}{
\#{\mathcal{P}}
} 
\ds\sum_{
\mathfrak{p} \in {\mathcal{P}}_{\psi}
\atop{\deg \mathfrak{p} = x}
}
\nu_{{\mathcal{P}}}
\left(
g
\left(
a_{\fp}(\psi)^2 - 4 u_{\fp}(\psi) p 
\right)
\right) 
+ 
\frac{1
}{
(\#{\mathcal{P}})^2
} 
\ds\sum_{
\mathfrak{p} \in {\mathcal{P}}_{\psi}
\atop{\deg \mathfrak{p} = x}
}
\nu_{{\mathcal{P}}}
\left(
g
\left(
a_{\fp}(\psi)^2 - 4 u_{\fp}(\psi) p 
\right)
\right)^2. 
\end{eqnarray}

On one hand, by the Prime Number Theorem for $A$,
\begin{equation}\label{sqsieve-1}
\frac{\#{\mathcal{A}}}{\#{\mathcal{P}}} \asymp q^{x - \theta}
\frac{\theta}{x}.
\end{equation}

On the other hand, by noting that, for  any $a \in A$,
$\nu_{{\mathcal{P}}}(a) \leq \deg a$,
and by using  (\ref{RH}), we deduce that, for primes $\mathfrak{p}$ of degree $x$, 
\begin{equation*}
\nu_{{\mathcal{P}}}
\left(
g
\left(
a_{\fp}(\psi)^2 - 4 u_{\fp}(\psi) p 
\right)
\right)
 \leq 
 x + \deg g.
\end{equation*}
We infer the estimates
\begin{equation}\label{sqsieve-2}
\frac{
2
}{
\#{\mathcal{P}}
} 
\ds\sum_{
\mathfrak{p} \in {\mathcal{P}}_{\psi}
\atop{\deg \mathfrak{p} = x}
}
\nu_{{\mathcal{P}}}
\left(
g
\left(
a_{\fp}(\psi)^2 - 4 u_{\fp}(\psi) p 
\right)
\right) 
\ll 
q^{x - \theta} \; \frac{\theta}{x} \; (x + \deg g),
\end{equation}
\begin{equation}\label{sqsieve-3}
\frac{1
}{
(\#{\mathcal{P}})
} 
\ds\sum_{
\mathfrak{p} \in {\mathcal{P}}_{\psi}
\atop{\deg \mathfrak{p} = x}
}
\nu_{{\mathcal{P}}}
\left(
g
\left(
a_{\fp}(\psi)^2 - 4 u_{\fp}(\psi) p 
\right)
\right)^2
\ll 
q^{x - 2 \theta} \; \frac{\theta^2}{x}\
\; (x + \deg g)^2.
\end{equation}

Now let  ${\ell}_1, {\ell}_2 \in {\mathcal{P}}$ be distinct primes
such that
$(\ell_1 \ell_2, M(\psi)) = 1$, where $M(\psi) \in A^{(1)}$ was introduced in part (iv) of Theorem \ref{properties-division-fields}.
(Note that, by choosing $x$ sufficiently
large,  hence, as we shall see, by choosing $\theta(x)$ sufficiently large, we can ensure that this condition holds.)
We define
$$
T_1 = T_1(\ell_1, \ell_2) 
:= 
\#\left\{ \mathfrak{p} \in {\mathcal{P}}_{\psi}: 
\deg \mathfrak{p} = x, 
\left( \frac{
a_{\mathfrak{p}}(\psi)^2 - 4 u_{\mathfrak{p}}(\psi) p}{
\ell_1 } \right) =
 \left( \frac{ a_{\mathfrak{p}}(\psi)^2 - 4
u_{\mathfrak{p}}(\psi) p}{ \ell_2 } \right) = 1 \right\},
$$
$$
T_2 = T_2(\ell_1, \ell_2)
:= \#\left\{ \mathfrak{p} \in {\mathcal{P}}_{\psi}: 
\deg \mathfrak{p} = x, 
\left( \frac{
a_{\mathfrak{p}}(\psi)^2 - 4 u_{\mathfrak{p}}(\psi) p}{
\ell_1 } \right) = \left( \frac{ a_{\mathfrak{p}}(\psi)^2 - 4
u_{\mathfrak{p}}(\psi) p}{ \ell_2 } \right) = -1 \right\},
$$
$$
T_3 = T_3(\ell_1, \ell_2) 
:= \#\left\{ \mathfrak{p} \in {\mathcal{P}}_{\psi}: 
\deg \mathfrak{p} = x, 
\left( \frac{
a_{\mathfrak{p}}(\psi)^2 - 4 u_{\mathfrak{p}}(\psi) p}{
\ell_1 } \right) = - \left( \frac{ a_{\mathfrak{p}}(\psi)^2 - 4
u_{\mathfrak{p}}(\psi) p}{ \ell_2 } \right) = 1 \right\},
$$
$$
T_4 = T_4(\ell_1, \ell_2)
 := \#\left\{ \mathfrak{p} \in {\mathcal{P}}_{\psi}:
\deg \mathfrak{p} = x, \left( \frac{
a_{\mathfrak{p}}(\psi)^2 - 4 u_{\mathfrak{p}}(\psi) p}{
\ell_1 } \right) = - \left( \frac{ a_{\mathfrak{p}}(\psi)^2 - 4
u_{\mathfrak{p}}(\psi) p}{ \ell_2 } \right) = -1 \right\},
$$	
and
$$
\hat{C}_1 
=
\hat{C}_1(\ell_1, \ell_2)
:=
\left\{
(\hat{g}_1, \hat{g}_2) \in
 \PGL_2(A/\ell_1 \ell_2 A): \left( \frac{
(\tr g_1)^2 - 4 \det{g_1}} {\ell_1} \right) = \left( \frac{(\tr g_2)^2 - 4 \det{g_2}}{\ell_2} \right) = 1 \right\},
$$
$$
\hat{C}_2 
=
\hat{C}_2(\ell_1, \ell_2)
:=
\left\{
(\hat{g}_1, \hat{g}_2) \in
\PGL_2(A/\ell_1 \ell_2 A): \left( \frac{
(\tr g_1)^2 - 4 \det{g_1}} {\ell_1} \right) = \left( \frac{(\tr g_2)^2 - 4 \det{g_2}}{\ell_2} \right)
= -1 \right\},
$$
$$
\hat{C}_3 
=
\hat{C}_3(\ell_1, \ell_2)
:=
\left\{
(\hat{g}_1, \hat{g}_2) \in
\PGL_2(A/\ell_1 \ell_2 A):
\left( \frac{ (\tr g_1)^2 - 4 \det{g_1}} {\ell_1} \right) = - \left( \frac{(\tr g_2)^2 - 4 \det{g_2}}{\ell_2} \right) =1
\right\},
$$
$$
\hat{C}_4 
=
\hat{C}_4(\ell_1, \ell_2)
:=
\left\{
(\hat{g}_1, \hat{g}_2) \in
\PGL_2(A/\ell_1 \ell_2 A):
\left( \frac{ (\tr g_1)^2 - 4 \det{g_1}} {\ell_1} \right) = - \left( \frac{(\tr g_2)^2 - 4 \det{g_2}}{\ell_2} \right) = -1
\right\},
$$
where  $\hat{g}$ denotes the projective image of a matrix $g \in \GL_2(A/\ell_1 \ell_2 A)$.

On one hand, we have
\begin{eqnarray}\label{T}
S_{\ell_1, \ell_2} 
&:=& 
\ds\sum_{
\mathfrak{p} \in {\cal{P}}_{\psi}
\atop{\deg \mathfrak{p} = x}
}
\left( 
\frac{ 
g
\left(
a_{\fp}(\psi)^2 - 4 u_{\fp}(\psi) p 
\right)
}{
{\ell}_1 }
\right)
\left( 
\frac{ 
g
\left(
a_{\fp}(\psi)^2 - 4 u_{\fp}(\psi) p 
\right)
}{
{\ell}_2 }
\right)
\nonumber
\\
&=&
 \left(\frac{g}{\ell_1}\right)
\left(\frac{g}{\ell_2}\right) \left( T_1 + T_2 - T_3 - T_4
\right).
\end{eqnarray}
On the other hand, by parts (v) and (vii) of Theorem \ref{properties-division-fields}, for each 
$1 \leq i \leq 4$ we have
\begin{equation}\label{T-Chebotarev}
T_i =
\Pi_{\hat{C}_i}
(x, J_{\ell_1 \ell_2}/F)
=
\frac{ \#\hat{C}_i }{ \#\PGL_2(A/\ell_1 \ell_2 A) }
\cdot
\frac{q^x}{x}
+
\O_{\psi}
\left(
(\#\hat{C}_i)^{\frac{1}{2}}
q^{\frac{x}{2}}
\deg(\ell_1 \ell_2)
\right).
\end{equation}

Easy counting arguments imply that, for any prime $\ell \in A$,
$$
\#\PGL_2(A/\ell A)
=
|\ell|_{\infty}  (|\ell|_{\infty}^2 - 1),
$$
$$ \#\left\{ \hat{g} \in \PGL_2(A/\ell A): 
\displaystyle \left(\frac{(\tr
g)^2 - 4 \det{g}}{\ell}\right) = 1 \right\} 
= 
\displaystyle\frac{|\ell|_{\infty}^3}{2} +
\O\left(|\ell|_{\infty}^2\right),
$$
$$ 
\#\left\{ \hat{g} \in \PGL_2(A/\ell A): \displaystyle\left(\frac{(\tr
g)^2 - 4 \det{g}}{\ell}\right) = -1 \right\} = 
\displaystyle\frac{|\ell|_{\infty}^3}{2} +
\O\left(|\ell|_{\infty}^2\right).
$$
Therefore, for each $1 \leq i \leq 4$,
$$
\#\hat{C}_i = 
\left( 
\frac{|\ell_1|_{\infty}^3}{2} 
+
\O\left(|\ell_1|_{\infty}^2\right) \right) \left( \frac{|\ell_2|_{\infty}^3}{2} 
+
\O\left(|\ell_2|_{\infty}^2\right) \right) = \frac{|\ell_1|_{\infty}^3 |\ell_2|_{\infty}^3}{4} +
\O\left(|\ell_1|_{\infty}^2 |\ell_2|_{\infty}^2 (|\ell_1|_{\infty} + |\ell_2|_{\infty})\right),
$$
where the $\O$-constants are absolute.
Consequently, by (\ref{T-Chebotarev}), for each $1 \leq i \leq 4$ we have
$$
T_i =
\frac{|\ell_1|_{\infty}^2 |\ell_2|_{\infty}^2 }{
4 \left(|\ell_1|_{\infty}^2 - 1\right)\left(|\ell_2|_{\infty}^2 - 1\right)
}
\cdot
\frac{q^x}{x}
+
\O\left(\frac{|\ell_1|_{\infty} + |\ell_2|_{\infty}}{|\ell_1|_{\infty} |\ell_2|_{\infty}} \cdot \frac{q^x}{x}\right)
+ 
\O_{\psi}\left( |\ell_1|_{\infty}^{\frac{3}{2}} |\ell_2|_{\infty}^{\frac{3}{2}} \cdot q^{\frac{x}{2}}
\log_q(|\ell_1|_{\infty} + |\ell_2|_{\infty}) \right). 
$$

By plugging these estimates into (\ref{T}) and recalling that $|\ell_1|_{\infty} = |\ell_2|_{\infty} = q^{\theta}$,
we obtain
\begin{equation}\label{sqsieve-4}
S_{\ell_1, \ell_2}
 \ll_{\psi} 
\frac{q^{x - \theta}}{x} +
q^{\frac{x}{2} + 3 \theta} \theta.
\end{equation}
Then, by combining (\ref{square-sieve-applied}) with  (\ref{sqsieve-1}), (\ref{sqsieve-2}), (\ref{sqsieve-3}), and (\ref{sqsieve-4}), we obtain
$$
S_g(\psi)
\ll_{\psi} 
q^{x - \theta} \frac{\theta}{x} (x + \deg g)
+
q^{\frac{x}{2} + 3 \theta} \theta
+
q^{x - 2 \theta} \frac{\theta^2}{x} (x + \deg g)^2.
$$

We now choose
$$
\theta := \frac{x}{8}
$$
and conclude that
$$
S_g(\psi)
 \ll_{\psi} 
 q^{\frac{7 x}{8}} (x + \deg g)
 +
 q^{\frac{3 x}{4}} x (x + \deg g)^2,
$$
justifying (\ref{square-sieve-application}).


\subsubsection{Conclusion}

By putting together (\ref{B-split}), (\ref{B1}), (\ref{B2}), and by 
choosing
$$
y(x) := \frac{11 x}{24}
$$
for any arbitrary $\varepsilon >0$,  we obtain that
\begin{equation}\label{asymp-final-noncm}
{\mathcal{B}}(\psi, x)
=
\ds\sum_{m \in A^{(1)}}
\frac{\mu_A(m) c_{J_m}(x)}{[J_m : F]}
\cdot
\frac{q^x}{x}
+
\O_{\psi, F, \varepsilon} \left(q^{\frac{23 x}{24} + x \varepsilon} x^3\right).
\end{equation}

\subsubsection{Dirichlet density}

To determine the Dirichlet density of the set $\{\fp \in {\cal{P}}_{\psi}: b_{\fp}(\psi) = 1\}$, 
we make use of the asymptotic formula  (\ref{asymp-final-noncm}). In particular, for $s > 1$ (with $s \rightarrow 1$), we have:

\begin{eqnarray*}
\ds\sum_{\fp \in {\cal{P}}_{\psi} \atop{b_{\fp}(\psi) = 1}  } q^{-s \deg \fp}
&=&
\ds\sum_{x \geq 1} q^{- s x} {\cal{B}}(\psi, x)
\\
&=&
\ds\sum_{m \in A^{(1)}}
\frac{\mu_A(m)}{[J_m : F]}
\ds\sum_{x \geq 1 \atop{c_{J_m} | x}}
\frac{q^{(1-s) x} c_{J_m}}{x}
+
\O_{\psi, F, \varepsilon}\left(
\ds\sum_{x \geq 1}
q^{\left(\frac{23}{24} + \varepsilon - s\right) x}
\right)
\\
&=&
\ds\sum_{m \in A^{(1)}}
\frac{\mu_A(m)}{[J_m : F]}
\ds\sum_{j \geq 1}
\frac{q^{(1-s) j c_{J_m}}}{j}
+
\O_{\psi, F, \varepsilon}
\left(
\frac{q^{\frac{23}{24} + \varepsilon - s} }{1 - q^{\frac{23}{24} + \varepsilon - s} }
\right)
\\
&=&
-
\ds\sum_{m \in A^{(1)}}
\frac{\mu_A(m)}{[J_m : F]}
\log 
\left(
1 - q^{(1-s) c_{J_m}}
\right)
+
\O_{\psi, F, \varepsilon}
\left(
\frac{q^{\frac{23}{24} + \varepsilon - s} }{1 - q^{\frac{23}{24} + \varepsilon - s} }
\right).
\end{eqnarray*}
Upon taking the quotient with $- \log \left(1 - q^{1-s}\right)$ and the limit $s \rightarrow 1+$, we obtain 
$\ds\sum_{m \in A^{(1)}} \frac{\mu_A(m)}{[J_m : F]}$.
We include some details for the limit of the first quotient: with $c := c_{J_m}$ and upon applying l'Hopital, we obtain
$$
\lim_{s \rightarrow 1+}
\frac{
\log \left(1 - q^{(1-s) c}\right)
}{
\log \left(1 - q^{1-s}\right)
}
=
c \ds\lim_{s \rightarrow 1+}
\frac{
q^{(1-s) c} \left(1 - q^{1-s}\right)
}{
q^{1-s} \left(1 - q^{(1-s) c}\right)
}
=
c \ds\lim_{s \rightarrow 1+}
\frac{
q^{(c-1)(1-s)}
}{
1 + q^{2 (1-s)} + q^{3 (1-s)} + \ldots + q^{(c-1) (1-s)}
}
=
\frac{c}{c}
= 1.
$$
The limit of the second quotient is $0$.

\subsection{Proof of Part (b) of Theorem \ref{bp-one}
}

With notation (\ref{defB}), we write
\begin{eqnarray}\label{B-CM}
{\mathcal{B}}(\psi, x)
&=&
\#\left\{
\mathfrak{p} \in {\mathcal{P}}_{\psi} :
\deg \mathfrak{p} = x,
\mathfrak{p} \; \text{ordinary},
E_{\psi, \mathfrak{p}}
=
A[\pi_{\fp}(\psi)]
\right\}
\nonumber
\\
&+&
\#\left\{
\mathfrak{p} \in {\mathcal{P}}_{\psi} :
\deg \mathfrak{p} = x,
\mathfrak{p} \; \text{supersingular},
E_{\psi, \mathfrak{p}}
=
A[\pi_{\fp}(\psi)]
\right\}
\nonumber
\\
&=:&
{\mathcal{B}}^{o}(\psi, x) + {\mathcal{B}}^{ss}(\psi, x).
\end{eqnarray}
We will estimate each of the two terms above separately.

\subsubsection{
Ordinary primes
}

Let $\mathfrak{p} \in {\mathcal{P}}_{\psi}$ be an ordinary prime for $\psi$. First of all, 
$$
\End_{\overline{F}}(\psi) \otimes_A F
\subseteq
\overline{E}_{\psi, \mathfrak{p}} \otimes_A F,  
$$
so using,  the assumptions that $\mathfrak{p}$ is ordinary and that 
$\End_{\overline{F}}(\psi)$ is a maximal order in $K$, we deduce that
\begin{equation}\label{endom-ordinary}
E_{\psi, \mathfrak{p}}
\simeq 
{\mathcal{O}}_{\psi, \mathfrak{p}}
\simeq
\overline{E}_{\psi, \mathfrak{p}}
\simeq
\End_{\overline{F}}(\psi).
\end{equation}
In particular, the discriminant $\Delta$ of $ \End_{\overline{F}}(\psi)$ equals $\Delta(E_{\psi, \mathfrak{p}})$ and so, 
by (\ref{ap-bp-generators}), there exists $\delta \in A$, {\it{independent of $\mathfrak{p}$}}, such that
$$
\Delta = \delta A
$$
and
$$
a_{\fp}(\psi)^2 - 4 u_{\fp}(\psi) p =
b_{\fp}(\psi)^2 \delta.
$$
Consequently,
\begin{equation}\label{bp-ordinary}
b_{\fp}(\psi) = 1
\; \;
\Leftrightarrow
\; \;
u_{\fp}(\psi) p
=
\left(\frac{a_{\fp}(\psi)}{2}\right)^2 - \frac{\delta}{4}.
\end{equation}
Recalling (\ref{RH}) and using part (i) of Lemma  \ref{lemma1}, we deduce that there are at most 
$\O(q^{\frac{x}{2}})$ possible $a_{\fp}(\psi) \in A$. 
Also, there are at most $q-1$ possible choices of $\delta$.
Thus, by (\ref{bp-ordinary}),
\begin{equation}\label{Bordinary}
{\mathcal{B}}^o(\psi, x) \ll q^{\frac{x}{2}}.
\end{equation}

\subsubsection{
Supersingular primes
}

Let $\mathfrak{p} \in {\mathcal{P}}_{\psi}$ be a supersingular prime for $\psi$. In other words,
\begin{equation}\label{ap-supersingular}
a_{\fp}(\psi) = 0
\end{equation}
(cf. \cite[Prop. 4]{Yu}). 
By using this in (\ref{ap-bp-generators}), we deduce that
$
- 4 u_{\fp}(\psi) p = b_{\fp}(\psi)^2 \delta_{\fp} (\psi),
$
which implies 
$b_{\fp}(\psi) = 1.$

Under the assumption $\End_{\overline{F}}(\psi) \otimes_A F \simeq K$, we also have that any supersingular prime 
$\fp$ for $\psi$ is either ramified or inert in 
$K$. Indeed, $K\otimes_F F_\fp$ is a subalgebra of $\overline{E}_{\psi, \mathfrak{p}} \otimes_A F_\fp$. But if $\fp$ 
is a prime of supersingular reduction, then $\overline{E}_{\psi, \mathfrak{p}} \otimes_A F_\fp$ is the  
division quaternion algebra over $F_\fp$. This implies that $K\otimes_F F_\fp$ is a field, which itself implies that $\fp$ does not split in $K$.   
Combining this with the Chebotarev Density Theorem for $K$, we deduce that
\begin{equation}\label{Bsupersingular}
{\mathcal{B}}^{ss}(\psi, x) =
 \frac{c_K(x)}{2} \cdot \frac{q^x}{x}
 +
 \O_{K}\left(q^{\frac{x}{2}}\right).
\end{equation} 

By putting together (\ref{B-CM}), (\ref{Bordinary}) and (\ref{Bsupersingular}), 
and by a similar calculation as in Section 4.2.4,
we complete the proof of part  (b) of Theorem \ref{bp-one}.

\subsection{
Remarks
}

\noindent
(i)
A natural question to ask is whether the Dirichlet density in 
part (a) of Theorem \ref{bp-one} is positive.  This question is related to a good 
understanding of the  constant $M(\psi)$ introduced in part (iv) of Theorem \ref{properties-division-fields}, and, in particular, to an 
understanding of effective versions of the Open Image Theorems for Drinfeld modules proven by R. Pink and E. R\"{u}tsche \cite{PiRu}.  
We point out that, unlike the situation for elliptic curves (cf. \cite{CoDu}, where any elliptic curve over $\Q$ with 
rational 2-torsion gives rise to a zero density of reductions with small endomorphism rings), 
there is no immediate obstruction for a Drinfeld module $\psi$ to have a positive Dirichlet density  
for $\{\fp \in {\cal{P}}_{\psi} :  
\End_{\F_{\mathfrak{p}}}(\psi \otimes \F_{\mathfrak{p}})
=
A[\pi_{\fp}(\psi)]
\}$.
In \cite{Zywina}, Zywina gives an example of a rank-$2$ Drinfeld $\F_q[T]$-module $\psi$ over $\F_q(T)$ for which 
the residual representations $\bar{\rho}_{\psi, a}$ are surjective for all $a\in A$ 
and $\overline{\F}_q\cap F(\psi[a])=\F_q$ for all $a\in A$. 
It is easy to see that for this particular $\psi$ the Dirichlet density in question is indeed non-zero. 
 
(ii) As already emphasized in Corollary \ref{structure-mod-p}, the condition $b_{\fp}(\psi) = 1$ 
implies that ${^\psi}\F_{\fp}$ is $A$-cyclic. The reductions of $\psi$ giving rise to a cyclic 
$A$-module have been studied in several works, for example,  
\cite{CoSh}, \cite{Hs}, \cite{ HsYu}, and  \cite{KuLi}. 
An outcome of part (b) of Theorem \ref{bp-one} is then that for any rank 2 Drinfeld module $\psi$ 
whose endomorphism ring is the integral closure of $A$ in a quadratic imaginary extension of $F$, 
there is a density $\geq 0.5$ of primes which give rise to reductions of $\psi$ with $A$-cyclic structures. 
This is to be contrasted with the situation for elliptic curves (see \cite{CoMu}), where  such a result is not true: 
there exist CM elliptic curves over $\Q$ (in fact, any such curve with a rational 2-torsion) 
which have no reductions with cyclic structures; moreover, for such CM elliptic curves with no rational 2-torsion 
one cannot always ensure a density  of $\geq 0.5$ of cyclic reductions.

(iii) For comparison, we recall that the Lang-Trotter Conjecture for Drinfeld modules predicts that, 
for any rank-$2$ Drinfeld module $\psi: A \to F\{\tau\}$ and any $a \in A$ (non-zero, if $\psi$
has CM), 
\begin{equation*}
{\cal{A}}(\psi, x, a)
:=
\#\{
\fp \in {\cal{P}}_{\psi}:
\deg \fp = x, 
a_{\fp}(\psi) = a
\}
\sim
C(\psi, a) \frac{q^{\frac{x}{2}}}{x}
\end{equation*}
for some constant $C(\psi, a) \geq 0$. 
Less is known about this asymptotic formula compared to what we have just proved about $b_{\fp}(\psi)$. 
Specifically, apart from lower bounds for  the case $a = 0$ (see \cite{Bro} and \cite{Da1}), 
only upper bounds for ${\cal{A}}(\psi, x, a)$ are currently known (see \cite{CoDa}, \cite{Da2}, and \cite{Zy}). 
Moreover, the particular case $a = 0$, which  is equivalent to the study of supersingular primes,  has led 
to intriguing results. Indeed, unlike for elliptic curves over $\Q$ where there are always infinitely many supersingular primes,  
there exist Drinfeld modules $\psi$ with no supersingular prime (see \cite{Po} and the references therein).


\section{CM-liftings of Drinfeld modules}\label{CMLifting}

\subsection{CM-liftings of abelian varieties} To motivate the discussion and definitions in the setting of Drinfeld modules in $\S$\ref{ssCMLD}, 
we first recall what is known about CM-liftings of abelian varieties. 

Let $B$ be an abelian variety of dimension $g$ defined over a field $K$. Following \cite[Def. 1.7]{Oo}, 
we say that $B$ has \textbf{sufficiently many complex multiplications} (or is \textbf{CM}, for short)
if $\End^0_K(B):=\End_K(B)\otimes_\Z\Q$ contains a commutative 
semi-simple algebra $L$ of dimension $2g$ over $\Q$. If $B$ is simple, then $L$ 
is necessarily a CM field, i.e., a totally imaginary quadratic extension of a totally real field.    

Let $B_0$ be an abelian variety over a field $k$ of characteristic $p$. We 
say that $B$ is a \textbf{CM-lifting} of $B_0$ if there exists a normal domain 
$R$ with fraction field $K$ of characteristic zero, a ring homomorphism $R\to k$, 
and an abelian scheme $\cB$ over $R$ such that $\cB\otimes_R k\cong B_0$ 
and $B=\cB\otimes_R K$ is CM. 

The earliest result about CM-liftings is a well-known theorem of Deuring:   

\begin{theorem}\label{Deuring}
Let $E_0$ be an elliptic curve over a finite field $k$. For any $f_0\in \End_k(E_0)$ 
generating an imaginary quadratic field $L\subset \End^0_k(E_0)$, there is an elliptic curve $E$ 
over the ring of integers $R$ of a finite extension of $\Q_p$ equipped with an endomorphism $f\in \End_K(E)$ 
such that $(E,f)$ has special fibre isomorphic to $(E_0, f_0)$. 
\end{theorem}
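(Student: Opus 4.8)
The plan is to split the problem according to Deuring's reduction dichotomy. Since $f_0$ generates $L$ and lies in $\End_{\overline{k}}(E_0)$, the curve $E_0$ is ordinary exactly when $p$ splits in $L$ and supersingular exactly when $p$ is inert or ramified in $L$; in the first case $\End_{\overline{k}}(E_0)$ is an order of $L$, and in the second it is a maximal order of the quaternion algebra $B_{p,\infty}$ ramified at $p$ and $\infty$, into which $L$ embeds. In either case I would manufacture the lift from the theory of complex multiplication over a number field, and then devote most of the work to identifying its reduction with the prescribed pair $(E_0,f_0)$.

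For the construction, fix an order $\mathcal{O}$ of $L$ with $\Z[f_0]\subseteq\mathcal{O}$ --- in the ordinary case I take $\mathcal{O}=\End_{\overline{k}}(E_0)$, and in the supersingular case any such $\mathcal{O}$ will do --- and choose a proper fractional $\mathcal{O}$-ideal $\mathfrak{b}\subset L$. Then $\mathcal{E}_{\C}:=\C/\mathfrak{b}$ has $\End(\mathcal{E}_{\C})\cong\mathcal{O}$, its $j$-invariant is an algebraic integer, and it descends, together with the endomorphism $f$ attached to $f_0\in\mathcal{O}$, to the ring class field $H$ of $\mathcal{O}$. Enlarging $H$ I may assume good reduction at every finite place; I then pick a place $\mathfrak{P}\mid p$ of $H$ and take for $R$ the ring of integers of a finite extension of $H_{\mathfrak{P}}$ chosen so that its residue field is exactly $k$ (twisting if necessary). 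Reduction gives a pair $(E,f)$ over $R$, with $f\in\End_R(E)$ specializing to an endomorphism $\bar f$ of the special fibre; since reduction is injective on endomorphism rings, $\bar f$ generates a field isomorphic to $L$, and by Deuring's criterion the reduction type of the special fibre already agrees with that of $E_0$. (In the ordinary case one can bypass this construction entirely: the Serre--Tate canonical lift of $E_0$ over $W(k)$ has endomorphism ring $\End_k(E_0)$ and hence lifts $f_0$ automatically, so the real content of the theorem is the supersingular case.)

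What remains, and what I expect to be the main obstacle, is to arrange that $(\bar E,\bar f)\cong(E_0,f_0)$ over $k$ itself --- not merely up to a twist, and with $\bar f$ carried to $f_0$ rather than to its complex conjugate. In the ordinary case I would use that the $\overline{k}$-isomorphism classes of ordinary elliptic curves over $\overline{k}$ with endomorphism ring $\mathcal{O}$ form a torsor under $\Pic(\mathcal{O})$, realized by the reductions of the curves $\C/\mathfrak{b}'$ as $\mathfrak{b}'$ runs over $\Pic(\mathcal{O})$; picking $\mathfrak{b}$ in the class that reduces to that of $E_0$, and choosing $\mathfrak{P}$ so that the main theorem of complex multiplication makes the reduced Frobenius equal to the Frobenius $\pi\in\mathcal{O}$ of $E_0$, forces a $k$-isomorphism, and this isomorphism may be taken $\mathcal{O}$-linear by rigidity of the CM structure, hence sends $\bar f$ to $f_0$. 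In the supersingular case the $\Pic$-torsor is replaced by Eichler's theory of optimal embeddings of $\mathcal{O}$ into the maximal orders of $B_{p,\infty}$ --- equivalently, by an analysis of the CM locus inside the deformation space of $E_0$: the supersingular curves whose endomorphism ring admits such an embedding are precisely the reductions of the CM curves $\C/\mathfrak{b}'$, the curve $E_0$ is one of them, and one descends to $k$ and matches Frobenius using that $\pi$ commutes with $f_0$ and therefore lies in $L$. Producing \emph{some} CM lift is routine; the difficulty is that the target is the prescribed $(E_0,f_0)$ over the prescribed finite field, and pinning the reduction down to it requires the full reduction theory of CM abelian varieties together with a careful choice of $\mathfrak{P}$.
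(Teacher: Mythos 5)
The paper itself offers no proof of this statement; it delegates to Theorem 1.7.4.6 of Chai--Conrad--Oort \cite{CCO}, whose argument constructs the lift formally via deformation theory (Serre--Tate and Grothendieck--Messing/crystalline methods) rather than through the global reduction theory of CM curves. So the meaningful comparison is between your sketch and that reference.

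Your ordinary-case argument via the Serre--Tate canonical lift is correct and complete: the canonical lift $\tilde E_0$ over $W(k)$ satisfies $\End_{W(k)}(\tilde E_0)=\End_k(E_0)$, so $f_0$ lifts for free. The supersingular case, as you say yourself, is where the argument stops, and three points do not go through as written. First, you propose to extend $H_{\mathfrak P}$ so that the residue field is ``exactly $k$ (twisting if necessary),'' but a finite extension of a local field can only enlarge the residue field, never shrink it; and ``twisting'' is not a remark but precisely the work left to do, since one must show the particular twist produced is the one whose $k$-Frobenius is $\pi$. Second, Shimura--Taniyama reciprocity controls the reduced Frobenius cleanly at primes split in $L$, i.e.\ in the ordinary case; at inert or ramified primes the reduced Frobenius is already constrained to $\pm p^{n/2}$ or to a uniformizer of $\mathcal O$, so varying $\mathfrak P$ gives only a sign (or unit) of freedom, and the actual identification of the reduction with the prescribed $(E_0,f_0)$ over the prescribed $k$ has to come from the Eichler optimal-embedding count together with a Galois-descent argument that your sketch gestures at but does not carry out. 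Third, you fix $\mathcal O\supseteq\Z[f_0]$ without requiring its conductor to be prime to $p$ --- the exact analogue of the ``good'' hypothesis the paper imposes in its Drinfeld analogue, Theorem \ref{thmCM} --- and when $p$ divides the conductor the reduction can have a strictly larger endomorphism ring than $\mathcal O$, so the optimal-embedding dictionary you rely on breaks. The deformation-theoretic route in \cite{CCO} avoids all three issues at once, which is why it is the standard one.
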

\begin{proof} 
See Theorem 1.7.4.6 in \cite{CCO}.  
\end{proof}

Next, as part of his proof that Tate's map from the isogeny classes of abelian varieties 
over a finite field to the Galois conjugacy classes of Weil numbers is surjective, Honda proved the 
following: 

\begin{theorem}
Given an abelian variety $B_0$ over a finite field $k$, there exists a finite 
extension $k\subset k'$ and an isogeny $B_0\otimes_k k'\to C_0$ defined over $k'$ 
such that $C_0$ has CM-lifting. 
\end{theorem}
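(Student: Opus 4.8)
The plan is to follow the classical route through the theory of complex multiplication, due to Honda and Tate; a complete modern treatment is in \cite{CCO}, and I only sketch the structure. First I would reduce to the case that $B_0$ is simple over $k=\F_q$: by Poincar\'e reducibility $B_0$ is isogenous to a product of powers of pairwise non-isogenous simple abelian varieties, a product (and any power) of CM-liftable abelian varieties is CM-liftable, and the assertion already allows a finite extension of $k$, so it suffices to lift each simple factor after base change. Assume then $B_0$ simple, let $\pi\in\End_k(B_0)$ be the $q$-Frobenius, and put $L:=\Q(\pi)$; by Weil, $\pi$ is a Weil $q$-number, and by Tate's theorem $\End^0_k(B_0)$ is a central division algebra over $L$, with $L$ equal to $\Q$, a real quadratic field, or a CM field. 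In the first two cases $\pi=\pm\sqrt q$, and over a suitable finite extension $k'/k$ the variety $B_0\otimes_k k'$ becomes isogenous to a power $E_0^n$ of a supersingular elliptic curve $E_0$ over $k'$; since $E_0$ together with a CM endomorphism lifts by Deuring's theorem (Theorem~\ref{Deuring}), so does $E_0^n$ (its endomorphism algebra on the generic fibre contains an \'etale $\Q$-algebra of dimension $2n$), and we may take $C_0:=E_0^n$.

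The main case is $L$ a CM field, and here the key step is the construction of a suitable $p$-adic CM type, where $p=\car k$. For each place $v\mid p$ of $L$ the Weil relation $\pi\bar\pi=q$ gives $\ord_v(\pi)+\ord_{\bar v}(\pi)=\ord_v(q)$ (with $\ord_v(q)=\ord_{\bar v}(q)$), so after replacing $q$ by a power $q^N$ and $\pi$ by $\pi^N$ --- that is, replacing $k$ by $\F_{q^N}$ --- I may assume that $n_v:=[L_v:\Q_p]\,\ord_v(\pi)/\ord_v(q)$ is a non-negative integer for every $v\mid p$. I then choose, for each $v\mid p$, a set $\Phi_v$ of exactly $n_v$ of the $\Q_p$-algebra embeddings $L_v\hookrightarrow\overline{\Q}_p$; the displayed relation ensures that these choices can be made so that, after fixing an isomorphism $\C\cong\overline{\Q}_p$, the union $\Phi:=\bigcup_{v\mid p}\Phi_v\subseteq\Hom(L,\C)$ is a CM type on $L$.

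Next I would invoke the main existence theorem for CM abelian varieties to obtain an abelian variety $A$ over a number field with $\cO_L\hookrightarrow\End(A)$ of CM type $(L,\Phi)$; after enlarging the number field, $A$ has good reduction at a place $\fP$ above $p$, with reduction $\bar A$ a CM abelian variety over a finite field. The Shimura--Taniyama reduction formula then computes the Frobenius $\pi_{\bar A}\in\cO_L$ of $\bar A$: it is a unit at the places away from $p$, satisfies a Weil relation, and at each $v\mid p$ has $v$-adic valuation proportional to $\#\Phi_v/[L_v:\Q_p]$, i.e.\ to $\ord_v(\pi)$. Hence a suitable power $\pi_{\bar A}^m$ and a suitable power $\pi^M$ of $\pi$ generate the same ideals of $\cO_L$ and have the same archimedean absolute values, so they differ by a root of unity of $L$; a further finite base change of $\bar A$ --- passing if necessary to a power $\bar A^d$ of $\bar A$ so as to match dimensions and isotypic multiplicities --- makes the Frobenius of $\bar A$ conjugate to that of $B_0\otimes_k k'$ for an appropriate finite extension $k'/k$. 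By Tate's theorem, abelian varieties over a finite field with conjugate Weil numbers are isogenous, so there is an isogeny $B_0\otimes_k k'\to C_0$ over $k'$ with $C_0=\bar A$ (or $\bar A^d$), and $C_0$ lifts, together with its CM, to $A$ (resp.\ $A^d$), which is the required conclusion.

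The hard part is the interplay in the previous two paragraphs between the choice of the $p$-adic CM type and the Shimura--Taniyama formula: one must arrange the CM type so that the formula reproduces \emph{exactly} the $p$-adic valuations of $\pi$, and it is precisely the integrality obstruction there --- the ratios $[L_v:\Q_p]\,\ord_v(\pi)/\ord_v(q)$ need not be integers --- that forces the passage to a finite extension of $k$. The remaining root-of-unity discrepancy between $\pi_{\bar A}$ and a power of $\pi$, together with the fact that Tate's theorem yields only an isogeny, is what forces $C_0$ merely to be isogenous to a base change of $B_0$ rather than isomorphic to it.
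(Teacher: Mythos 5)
The paper states this result without proof; it is cited as a theorem of Honda, with \cite{CCO} as the modern reference, so there is no internal argument to compare your sketch against, and I assess it on its own terms. Your architecture --- reduce to simple $B_0$ via Poincar\'e, dispose of the totally real case through supersingular elliptic curves and Deuring, and in the CM case manufacture a $p$-adic CM type whose Shimura--Taniyama data reproduces the valuations of $\pi$ --- is the right shape, and you correctly locate the crux in the integrality of $n_v:=[L_v:\Q_p]\,\ord_v(\pi)/\ord_v(q)$. But the step where you ``fix'' this is wrong: passing from $(\F_q,\pi)$ to $(\F_{q^N},\pi^N)$ leaves $n_v$ unchanged, because $\ord_v(\pi^N)/\ord_v(q^N)=\ord_v(\pi)/\ord_v(q)$ and $[L_v:\Q_p]$ depends only on $L$. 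Base change can shrink $L$ --- $\Q(\pi^N)\subsetneq\Q(\pi)$ occurs precisely when some conjugate ratio $\sigma(\pi)/\pi$ is a root of unity --- and in small examples this collapse quietly rescues the argument, but it is not a general mechanism. For instance, over $L=\Q(\zeta_5)$ with $p=11$ split completely and $q=11^2$, a Weil $q$-number with valuations $(1,0,1,2)$ at the four primes over $11$ exists (the class number is $1$ and totally positive units of $\Q(\sqrt 5)$ are squares); every ratio $\sigma(\pi)/\pi$ is a non-unit, so $\Q(\pi^N)=L$ for all $N$, $n_v=1/2$ is non-integral forever, and no CM type on $L$ with the needed local multiplicities exists.

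Honda's actual device replaces $L$, not $k$. One chooses a CM field $L'\supseteq L$ that is a maximal subfield of the division algebra $D:=\End^0_k(B_0)$; such an $L'$ exists, has degree $m:=(\dim_L D)^{1/2}$ over $L$, and by Tate's dimension formula satisfies $[L':\Q]=2\dim B_0$. Because a maximal subfield splits $D$, for $w\mid v\mid p$ the local degree $[L'_w:L_v]$ is a multiple of the local Schur index of $D$ at $v$, which is exactly the denominator of $n_v$; hence $n_w=[L'_w:L_v]\,n_v$ is an integer and the CM type can be built on $L'$. The resulting CM abelian variety $A$ over a number field has dimension $[L':\Q]/2=\dim B_0$, so no further juggling of powers is needed, and its reduction is isogenous to a base change of $B_0$ by Tate's isogeny theorem. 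A finite extension $k'/k$ (and a class-group adjustment) is still required at the end, to make the relevant ideal principal and to absorb the root-of-unity ambiguity between $\pi_{\bar A}$ and a power of $\pi$ --- but that is a finishing step, not a substitute for enlarging $L$. As written, your argument cannot even begin the CM-type construction in the example above.
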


Finally, in the recent monograph \cite{CCO} the authors show that both the isogeny and the field extension 
in the previous theorem are necessary for the existence of CM-liftings: 

\begin{theorem}\hfill
\begin{enumerate}
\item[(a)] For any $g\geq 3$, there exists an abelian variety over $\overline{\F}_p$ of dimension $g$ which 
does not admit CM-liftings. Hence the isogeny in Honda's theorem is necessary. 
\item[(b)] There exists 
an abelian variety $B_0$ over a finite field $k$ such that any $C_0$ 
isogenous to $B_0$ over $k$ does not admit a CM-lifting. 
Hence the field extension $k'/k$ in Honda's theorem is necessary. 
\end{enumerate}
\end{theorem}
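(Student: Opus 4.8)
\emph{Approach.} Both statements are local-at-$p$ assertions, and the plan is to convert them into statements about $p$-divisible groups with complex multiplication. Recall that, by Serre--Tate deformation theory together with Grothendieck's algebraization theorem and the fact that a CM abelian variety in characteristic zero is projective with CM action defined over its (number-field) ring of moduli, an abelian variety $B_0$ over $\overline{\F}_p$ (resp.\ over a finite field $k$) admits a CM-lifting if and only if the $p$-divisible group $B_0[p^\infty]$, equipped with its action by $\cO := \End_{\overline{\F}_p}(B_0)\cap L'$ for a suitable degree-$2g$ CM field $L'\subseteq\End^0_{\overline{\F}_p}(B_0)$, lifts to characteristic zero as an $\cO$-linear $p$-divisible group. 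Any such lift carries a CM type $\Phi$ on $L'$, and the Shimura--Taniyama formula rigidly constrains the pair $(\Phi,\ \cO\otimes\Z_p\text{-decomposition})$ in terms of the Dieudonn\'e module of $B_0[p^\infty]$ — concretely, in terms of the ``multiplicity sequence'' recording the $\cO\otimes_{\Z_p}W(\overline{\F}_p)$-module structure of the Hodge/Verschiebung data. So the strategy is: exhibit CM $p$-divisible groups whose local invariants at $p$ cannot be matched by any admissible CM type, and realize them via abelian varieties using the Honda--Tate correspondence.

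\emph{Part (a).} Fix $g\ge 3$. I would choose a CM field $L$ of degree $2g$ with $p$ inert in the totally real subfield $L^+$ — say $p\cO_{L^+}$ is a single prime $v$ with $[L^+_v:\Q_p]=g$ — and with $L/L^+$ ramified at $v$, so that $L\otimes_\Q\Q_p = L_w$ is one $p$-adic field of residue degree $g$ and ramification index $2$ over $\Q_p$. Then $\cO_{L_w}\otimes_{\Z_p}W(\overline{\F}_p)$ is a product, indexed cyclically by $\Z/g$, of $g$ ramified quadratic extensions of $W(\overline{\F}_p)$, and an $\cO_{L_w}$-linear $p$-divisible group of height $2g$ and dimension $g$ is determined by a multiplicity sequence $(d_i)_{i\in\Z/g}$ with $d_i\in\{0,1,2\}$ and $\sum_i d_i=g$. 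Since $L_w/L^+_v$ is totally ramified, complex conjugation acts trivially on the common residue field, so a CM type that could support a lift must give $d_i+d_i=2$, i.e.\ $d_i=1$ for all $i$. I would instead take the sequence $(2,0,1,\dots,1)$, and use Honda--Tate to produce a $g$-dimensional abelian variety $B_0/\overline{\F}_p$ with CM by $\cO_L$ realizing this $p$-divisible group, chosen (this is where $g\ge 3$ gives room) so that no degree-$2g$ CM subfield $L'$ of $\End^0_{\overline{\F}_p}(B_0)$ has a CM-type-realizable multiplicity sequence for $B_0[p^\infty]$. By the criterion above, $B_0$ has no CM-lift; since Honda's theorem gives a variety isogenous to $B_0$ over $\overline{\F}_p$ that does lift, the isogeny in Honda's theorem is necessary.

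\emph{Part (b).} Here the new input is that a CM-lift must be defined, \emph{with its CM structure}, over a characteristic-zero ring whose residue field is $k=\F_q$ itself — so the reduction of the generic CM abelian variety $B_\Phi$ with type $(L,\Phi)$ must acquire good reduction over $\F_q$ with geometric Frobenius equal to the prescribed Weil $q$-number $\pi$, not merely over some $\F_{q^n}$ with Frobenius $\pi^n$. Via the reflex field $E=E(L,\Phi)$ and the description of this Frobenius as a reflex norm, the residue degree of the relevant prime of $E$ over $p$ — hence $q$ — is pinned down by $\Phi$ and $p$. The plan is to pick a Weil $q$-number $\pi$ generating a degree-$2g$ CM field $L=\Q(\pi)$, with $p$ splitting in $L$ arranged so that every CM type $\Phi$ compatible (via Shimura--Taniyama) with the valuations of $\pi$ has reflex field forcing a residue degree that is a proper multiple of the one needed to descend to $\F_q$. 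Then no $(L,\Phi)$ reduces to the isogeny class of $B_0$ over $\F_q$, although some one does after a finite base change. Because the isogeny class over $\F_q$ is governed by the single Weil number $\pi$, and anything isogenous to a simple abelian variety is simple, this shows that \emph{no} $C_0$ isogenous to $B_0$ over $\F_q$ admits a CM-lift; hence the field extension in Honda's theorem is necessary.

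\emph{Main obstacle.} The crux of both parts is making rigorous the deformation-theoretic dictionary ``$B_0$ admits a CM-lift $\iff$ the local invariants of $B_0[p^\infty]$ at $p$ are realized by an admissible CM type'': that is, computing exactly which multiplicity sequences over $\cO\otimes_{\Z_p}W(\overline{\F}_p)$ arise as reductions of Hodge filtrations of $\cO$-linear lifts, and checking this is stable under replacing $\cO$ by other degree-$2g$ CM orders inside $\End^0_{\overline{\F}_p}(B_0)$. Two subsidiary difficulties: verifying via Honda--Tate that the intended local invariants are actually realized by an abelian variety with sufficiently few geometric endomorphisms; and, for (b), carrying out the reflex-field/residue-degree bookkeeping precisely enough to exclude every isogenous member over $\F_q$ while confirming that a suitable field extension repairs it.
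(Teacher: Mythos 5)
The paper does not prove this theorem: it is quoted from the monograph of Chai--Conrad--Oort \cite{CCO} purely as motivation for the Drinfeld-module lifting results of Section 5, so there is no internal proof to compare against. Your outline does reproduce the strategy actually used in \cite{CCO}. For (a), the obstruction is the Lie type (your ``multiplicity sequence''): with $p$ inert in $L^+$ and ramified in $L/L^+$, complex conjugation fixes each residue embedding, so every CM type reduces to the balanced type $(1,\dots,1)$, while a type such as $(2,0,1,\dots,1)$ is realizable once $g\geq 3$. For (b), the obstruction is the residual reflex condition: the Frobenius of the reduction of a CM abelian variety is a reflex norm, forcing the residue field of the reflex field at the induced $p$-adic place to embed into $k$; one then chooses a Weil number (the standard example is essentially $p\zeta_5$ over $\F_{p^2}$ with $p$ inert in $\Q(\zeta_5)$, all of whose compatible CM types have reflex field $\Q(\zeta_5)$ with residue degree $4\nmid 2$) violating this for every CM type compatible with the slopes.

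That said, what you have written is a plan, and the steps you defer to the ``main obstacle'' paragraph are exactly the substance of the theorem. First, the dictionary ``$B_0$ admits a CM-lift $\iff$ $B_0[p^\infty]$ admits an $\cO$-linear lift'' requires Serre--Tate theory plus algebraization of the resulting formal CM abelian scheme (producing a compatible polarization); this is a theorem, not bookkeeping. Second, for (a) it does not suffice that $L$ admits no good CM type: you must rule out every degree-$2g$ CM subfield of $\End^0_{\overline{\F}_p}(B_0)$, which in practice means constructing $B_0$ with the prescribed non-balanced Lie type \emph{and} $\End^0_{\overline{\F}_p}(B_0)=L$; exhibiting such a $B_0$ via Honda--Tate and Dieudonn\'e theory is where $g\geq 3$ and the specific arithmetic of $L$ genuinely enter, and no candidate is produced. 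Third, for (b) the necessity of the residual reflex condition for lifts over arbitrary local domains with residue field $k$, and its invariance under $k$-isogeny, must be proved rather than asserted. None of these gaps indicates a wrong turn --- they are the content of Chapters 2--4 of \cite{CCO} --- but the outline does not close them.
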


\subsection{CM-liftings of Drinfeld modules}\label{ssCMLD} As at the beginning of Introduction, 
let $F$ be the function field of a smooth, projective, geometrically irreducible curve over 
$\F_q$.  Fix a place $\infty$ of $F$, and let $A$ be the subring of $F$ consisting of functions 
which are regular away from $\infty$. 

Let $R$ be a discrete valuation ring with maximal ideal $\fm$ and field of fractions $K$. 
Assume $K$ is equipped with an injective homomorphism $\gamma: A\to K$, so the $A$-characteristic of $K$ is $0$. 
A \textbf{Drinfeld $A$-module over $R$} of rank $r$ is an embedding  
$\psi: A\to R\{\tau\}$ which is a Drinfeld module over $K$ of rank $r$, as defined in Introduction, and such 
that the composite homomorphism $\overline{\psi}: A\to R\{\tau\}\to (R/\fm)\{\tau\}$
is a Drinfeld module over $R/\fm$, again of rank $r$; cf. Definition 7.1 in \cite{Ha}.  
We say that $\psi$ has \textbf{CM} if $\End_K(\psi)\otimes_A F$ is a field extension $L$ of $F$ of degree $r$. 
(Note that $L$ is imaginary.) 

Let $k$ be a finite field with $A$-characteristic $\fp$. 
Let $\psi_0$ be a Drinfeld $A$-module over $k$.  We say that $\psi_0$ has a \textbf{CM-lifting}  
if there exists a discrete valuation ring $R$ with residue field $k$, and 
a CM Drinfeld module $\psi$ over $R$ such that $\overline{\psi}$ is isomorphic to $\psi_0$ over $k$.  

Let $q^n$ be the cardinality of $k$. 
Let $\psi_0$ be a rank-$r$ Drinfeld $A$-module over $k$. Denote $E=\End_k(\psi_0)$ 
and $D=E\otimes_A F$. It is clear that $\pi:=\tau^n\in E$. Let $\widetilde{F}:=F(\pi)\subseteq D$. 
The following is known about $D$ and $\widetilde{F}$ 
(see Theorem 1 in \cite{Yu}):
\begin{itemize}
\item The degree of $\widetilde{F}$ over $F$ divides $r$. Let $t:=r/[\widetilde{F}:F]$. 
\item There is a unique place $\fP$ of $\widetilde{F}$ which is a zero of $\pi$ and there is a unique 
place $\infty_{\widetilde{F}}$ of $\widetilde{F}$ which is a pole of $\pi$. Furthermore, $\fP$ lies over $\fp$, 
and $\infty_{\widetilde{F}}$ is the unique place lying over $\infty$. 
\item $D$ is a central division algebra over $\widetilde{F}$ of dimension $t^2$ with invariants 
$$
\mathrm{inv}_v(D) = \begin{cases} 1/t &\text{if }v=\fP\\
-1/t &\text{if }v=\infty_{\widetilde{F}}\\ 
0 &\text{otherwise.}
\end{cases}
$$
\end{itemize}

By Theorem 7.15 in \cite{Re}, 
the maximal subfields of $D$ are those which have degree $r$ over $F$, and any such 
field contains $\widetilde{F}$. Let $L$ be a maximal subfield of $D$. Denote by 
$A_L$ be the integral closure of $A$ in $L$ and put $\cA=E\cap L$. 
We say that $L$ is \textbf{good} for $\psi_0$ if 
the conductor $\fc$ of $\cA$ as an $A$-order in $A_L$ is coprime to $\fp$. 

\begin{theorem}\label{thmCM} If $L$ is good for $\psi_0$,  
then the Drinfeld module $\psi_0$ has a CM-lifting $\psi$ such that $\End_K(\psi)\otimes_A F=L$. 
\end{theorem}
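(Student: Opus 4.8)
The plan is to reduce the statement to the case of rank-one Drinfeld modules, where good reduction theory is well understood, and then to propagate the lift through an isogeny. First I would use the good hypothesis to replace $\psi_0$ by an isogenous module with maximal CM order: the finite $A$-module $A_L/\cA$ is annihilated by the conductor $\fc$, which is coprime to $\fp$, and the Drinfeld-module analogue of passing from an elliptic curve with CM by a non-maximal order (of conductor prime to the characteristic) to one with CM by the maximal order produces a Drinfeld module $\psi_0'$ over $k$ with $A_L\subseteq\End_k(\psi_0')$ together with a $k$-isogeny $\lambda\colon\psi_0\to\psi_0'$ whose degree divides $\fc$, hence is prime to $\fp$. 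I would also fix a dual isogeny $\mu\colon\psi_0'\to\psi_0$ with $\mu\lambda$ equal to the image of some $a\in A$ coprime to $\fp$, so that $\ker\mu$ is a finite étale (as $a$ is prime to the $A$-characteristic $\fp$), $A$-stable subgroup scheme of $\psi_0'$ with $\psi_0'/\ker\mu\cong\psi_0$. Because $[L\colon F]=r$ coincides with the rank of $\psi_0$, the module $\psi_0'$ equipped with its $A_L$-action is a Drinfeld $A_L$-module over $k$ of rank one.

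Next I would lift $\psi_0'$ to characteristic zero. The essential input is that rank-one Drinfeld $A_L$-modules are rigid — they admit no nontrivial infinitesimal deformations — and have good reduction at every place of $A_L$ (Drinfeld, Hayes); equivalently, the corresponding rigidified (say, sign-normalized) moduli problem is étale over $\operatorname{Spec}A_L$. Hence the $k$-point $\psi_0'$ of the fibre over its $A_L$-characteristic lifts uniquely to an $R$-point, where $R$ is the complete discrete valuation ring obtained from the completion of $A_L$ at that place by the unramified extension with residue field $k$. This $R$ has residue field $k$ and, receiving $A_L$ (hence $A$) injectively with fraction field of characteristic zero, has $A$-characteristic $0$; the lift is a rank-one Drinfeld $A_L$-module $\Psi'$ over $R$ with $\overline{\Psi'}\cong\psi_0'$. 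Restricting the $A_L$-action along $A\hookrightarrow A_L$ turns $\Psi'$ into a Drinfeld $A$-module over $R$ of rank $r$; since $K:=\operatorname{Frac}(R)$ has $A$-characteristic $0$, the algebra $\End_K(\Psi')\otimes_A F$ is a field of degree at most $r$ over $F$, and as it contains $L$ with $[L\colon F]=r$ it equals $L$.

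Finally I would transport the lift back along the isogeny. The finite étale $A$-stable subgroup scheme $\ker\mu\subset\psi_0'$ has a unique lift to a finite étale (hence flat) $A$-stable subgroup scheme $\widetilde G\subset\Psi'$ over $R$, its order being prime to the residue characteristic; put $\Psi:=\Psi'/\widetilde G$. Then $\Psi$ is a Drinfeld $A$-module over $R$ of rank $r$ with $\overline{\Psi}\cong\psi_0'/\ker\mu\cong\psi_0$, and, since an isogeny defined over $K$ induces an isomorphism of endomorphism algebras, $\End_K(\Psi)\otimes_A F\cong\End_K(\Psi')\otimes_A F=L$. Thus $\Psi$, over $R$ with fraction field $K$, is a CM-lifting of $\psi_0$ with $\End_K(\Psi)\otimes_A F=L$, as required.

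I expect the main obstacle to be the genuine use of the good hypothesis, that is, the non-maximal case: one must establish that the passage from $\cA$ to the maximal order $A_L$ is effected by an isogeny of degree prime to $\fp$ and — crucially — that the reduction of the resulting lift can be pinned down to $\psi_0$ itself rather than merely to a module isogenous to it; the prime-to-$\fp$ isogeny together with the unique lifting of étale group schemes is precisely what makes this possible. A secondary technical point is the descent of the rank-one lift to a discrete valuation ring with residue field exactly $k$ (rather than $\overline{\F}_\fp$), which I would handle through the étaleness/rigidity of the rank-one moduli problem as above.
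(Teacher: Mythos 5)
Your proposal takes essentially the same route as the paper's proof: pass to a $k$-isogenous rank-one Drinfeld $A_L$-module (using that the conductor $\fc$ is prime to $\fp$ to make the relevant isogeny kernel \'etale), lift that rank-one module over a complete DVR by rigidity/deformation theory, lift the \'etale kernel of the return isogeny, and quotient to obtain a lift of $\psi_0$ itself with $\End_K \otimes_A F = L$. The only cosmetic differences are that you phrase the return isogeny via a dual isogeny $\mu$ with $\mu\lambda = \psi_a$ rather than constructing it directly, and you invoke rigidity of the rank-one moduli problem where the paper cites Lehmkuhl's deformation theory explicitly.
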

\begin{proof} 
We can consider $\psi_0$ as an elliptic $\cA$-module of rank $1$ defined over $k$: 
$$
\psi_0': \cA\to k\{\tau\}. 
$$
The restriction of $\psi_0'$ to $A$ is the original module $\psi_0$. 
By \cite[Prop. 4.7.19]{Go} or \cite[Prop. 3.2]{Ha}, there is a Drinfeld $A_L$-module $\phi_0'$ 
of rank $1$ over $k$, whose restriction to $\cA$ is isogenous to $\psi_0'$ over $k$. Restricting 
$\phi_0'$ to $A$ we get a Drinfeld $A$-module $\phi_0$ of rank $r$. The fact that 
$\phi_0'$ and $\psi_0'$ are isogenous, implies that there is an isogeny $i:\phi_0\to \psi_0$ over $k$. 
Moreover, since by assumption $\fc$ is coprime to $\fp$, we can choose $i$ 
so that the group-scheme $\ker(i)$ has trivial intersection with $\phi_0[\fp]$; cf. the proof of Proposition 4.7.19 in \cite{Go}.   
Now the deformation theory of Drinfeld modules 
implies that $\phi_0'$ lifts to a rank-1 Drinfeld $A_L$-module $\phi'$ over a discrete valuation ring $R$ 
whose field of fractions has zero $A$-characteristic; see \cite[$\S$3.1]{Le}. 
Restricting $\phi'$ to $A$ we get a rank-$r$ Drinfeld $A$-module $\phi$ over $K$ with CM by $L$, 
whose reduction is $\phi_0$. Since $\ker(i)$ is \'etale, Corollary 2.3 on page 42 in 
\cite{Le} implies that the kernel of $i$ lifts to an $\cA$-invariant 
submodule $H\subset {^\phi}K^{\sep}$ which is also invariant under $\Gal(K^{\sep}/K)$. 
(Note that $H$ is not necessarily $A_L$-invariant.) By \cite[Prop. 4.7.11]{Go}, 
there is an isogeny $\phi\to \psi$ defined over $K$ whose kernel is $H$. It is easy to see that $\cA\subset \End_K(\psi)$, 
and the reduction of $\psi$ is $\psi_0$, so $\psi$ is the desired 
CM-lifting of $\psi_0$. 
\end{proof}

\begin{corollary}
Any Drinfeld module $\psi_0$ is isogenous over $k$ to some 
Drinfeld module $\phi_0$ having a CM-lifting. 
\end{corollary}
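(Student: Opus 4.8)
The plan is to reduce this immediately to Theorem \ref{thmCM}: given $\psi_0$ over $k$, I will produce an isogenous Drinfeld module $\phi_0$ for which some maximal subfield $L$ of $D := \End_k(\psi_0)\otimes_A F$ is \emph{good} in the sense defined above --- in fact, one for which $\End_k(\phi_0)\cap L$ is the full ring of integers $A_L$, so that the conductor of this order in $A_L$ is the unit ideal and hence trivially coprime to $\fp$.

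First I would fix a maximal subfield $L$ of $D$. By Theorem 7.15 in \cite{Re} such an $L$ exists, has degree $r$ over $F$, and contains $\widetilde{F}=F(\pi)$; the order $\cA := \End_k(\psi_0)\cap L$ then satisfies $\cA\otimes_A F = L$. Since $[L:F]=r$ equals the rank of $\psi_0$ as an $A$-module, $\psi_0$ equipped with its $\cA$-action is an elliptic $\cA$-module of rank $1$ over $k$ --- this is exactly the construction $\psi_0 \rightsquigarrow \psi_0'$ carried out at the start of the proof of Theorem \ref{thmCM}. Applying \cite[Prop. 4.7.19]{Go} (or \cite[Prop. 3.2]{Ha}) to $\psi_0'$, I obtain a Drinfeld $A_L$-module $\phi_0'$ of rank $1$ over $k$ whose restriction to $\cA$ is isogenous over $k$ to $\psi_0'$. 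Restricting $\phi_0'$ to $A$ gives a Drinfeld $A$-module $\phi_0$ of rank $r$ over $k$, and the isogeny at the level of $\cA$-modules restricts to an isogeny between $\psi_0$ and $\phi_0$; thus $\phi_0$ lies in the isogeny class of $\psi_0$.

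It then remains to observe that $L$ is good for $\phi_0$: since $A_L\subseteq \End_k(\phi_0)$ by construction, the order $\End_k(\phi_0)\cap L$ contains the integral closure $A_L$ and hence equals it, so its conductor in $A_L$ is the unit ideal, which is coprime to $\fp$. Theorem \ref{thmCM} applied to $\phi_0$ and $L$ then furnishes the desired CM-lifting. The only step demanding any care is the passage from $\psi_0$ to the rank-$1$ elliptic $\cA$-module $\psi_0'$ over the non-maximal order $\cA$, and the invocation of \cite[Prop. 4.7.19]{Go} in that setting; but this is precisely the situation already handled in the proof of Theorem \ref{thmCM}, so no new ingredient is required --- the corollary is essentially a bookkeeping consequence of that theorem together with the ability to pass, within an isogeny class, to a model whose endomorphism order inside a chosen maximal subfield is maximal.
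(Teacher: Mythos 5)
Your proposal is correct and follows essentially the same route as the paper: extract from the proof of Theorem~\ref{thmCM} the intermediate module $\phi_0$, the restriction to $A$ of the rank-one Drinfeld $A_L$-module $\phi_0'$ produced by \cite[Prop.~4.7.19]{Go}, and observe that it is isogenous to $\psi_0$ and admits a CM-lifting. The only stylistic difference is that you re-invoke Theorem~\ref{thmCM} for $\phi_0$ after checking that $\End_k(\phi_0)\cap L=A_L$ has trivial conductor, whereas the shortest path is to note that the deformation-theoretic step in the proof of Theorem~\ref{thmCM} already lifts $\phi_0'$ (hence $\phi_0$) directly, with no isogeny to lift and no goodness hypothesis needed; both formulations are valid and amount to the same construction.
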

\begin{proof}
This is clear from the proof of Theorem \ref{thmCM}. 
\end{proof}

\begin{proposition} In the following cases any maximal subfield $L$ is good: 
\begin{enumerate}
\item $\psi_0$ is supersingular. 
\item $r=2$.
\end{enumerate} 
\end{proposition}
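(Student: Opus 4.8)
The plan is, in both cases, to prove the stronger statement that the order $\cA = E\cap L$ is \emph{maximal at $\fp$}, meaning that $\cA\otimes_A\widehat{A}_\fp$ is the integral closure of $\widehat{A}_\fp$ in $L\otimes_F\widehat{F}_\fp$; this is exactly the condition that the conductor $\fc$ of $\cA$ in $A_L$ be prime to $\fp$. I write $\pi=\tau^n\in E$ for the $|k|$-power Frobenius ($|k|=q^n$), and use that $\End_k(\psi_0)=\{f\in\End_{\overline{k}}(\psi_0):f\pi=\pi f\}$; thus, with $\overline{E}:=\End_{\overline{k}}(\psi_0)$ and $\overline{D}:=\overline{E}\otimes_A F$, one has $E=\overline{E}\cap D$ where $D=\Centr_{\overline{D}}(\pi)$ (equality of algebras by a dimension count via the double centralizer theorem). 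The one elementary tool I need is: over a complete discretely valued field, a finite-dimensional division algebra $\Delta$ has a unique maximal order, namely its valuation ring $\{v\geq 0\}$ for the canonical valuation $v$; hence for any subalgebra (or subfield) $\Delta'\subseteq\Delta$ — automatically itself a division algebra — the intersection of the maximal order of $\Delta$ with $\Delta'$ is the maximal order of $\Delta'$, because $v|_{\Delta'}$ is, by uniqueness of the extension of a valuation from the center, a positive multiple of the canonical valuation of $\Delta'$.

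For part (1), suppose $\psi_0$ is supersingular. First I would note that $\fP$ is then the \emph{only} place of $\widetilde{F}$ above $\fp$: any other place above $\fp$ would be one at which $\pi$ is a unit, producing a slope-$0$ summand of the $\fp$-divisible module $\psi_0[\fp^\infty]$, contradicting that this module is isoclinic of positive slope. Consequently $D\otimes_F\widehat{F}_\fp$ is a division algebra, and for any maximal subfield $L\subseteq D$ the algebra $L\otimes_F\widehat{F}_\fp$ is a field. The essential input — which I take from the theory of supersingular Drinfeld modules, the exact analogue of the classical fact that a supersingular elliptic curve over $\overline{\F}_p$ has maximal endomorphism ring — is that $\overline{D}$ is ramified exactly at $\infty$ and $\fp$ and that $\overline{E}$ is a maximal $A$-order in $\overline{D}$; in particular $\overline{E}\otimes_A\widehat{A}_\fp$ is the valuation ring of the division algebra $\overline{D}\otimes_F\widehat{F}_\fp$. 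Since $D\otimes_F\widehat{F}_\fp=\Centr_{\overline{D}\otimes_F\widehat{F}_\fp}(\pi)$ is a division subalgebra, the tool gives that $E\otimes_A\widehat{A}_\fp=(\overline{E}\otimes_A\widehat{A}_\fp)\cap(D\otimes_F\widehat{F}_\fp)$ is the valuation ring of $D\otimes_F\widehat{F}_\fp$; applying the tool once more with the subfield $L\otimes_F\widehat{F}_\fp$ shows $\cA\otimes_A\widehat{A}_\fp=(E\otimes_A\widehat{A}_\fp)\cap(L\otimes_F\widehat{F}_\fp)$ is the valuation ring of $L\otimes_F\widehat{F}_\fp$, i.e.\ equals $A_L\otimes_A\widehat{A}_\fp$. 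Hence $\cA$ is maximal at $\fp$, $\fc$ is prime to $\fp$, and $L$ is good.

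For part (2), a rank-$2$ Drinfeld module over $k$ is either supersingular — in which case part (1) applies — or ordinary. If $\psi_0$ is ordinary then $t=1$, so $D=L=\widetilde{F}=F(\pi)$ is a quadratic imaginary field and $\cA=E$; moreover $\fp$ splits in $\widetilde{F}$, say $\fp A_{\widetilde{F}}=\fP\fP'$, with $\fP$ the unique zero of $\pi$ and $\pi$ a unit at $\fP'$. Reducing the minimal polynomial $m_\pi\in A[x]$ of $\pi$ over $F$ modulo $\fp$, its two roots are the images of $\pi$ in the residue fields $A_{\widetilde{F}}/\fP=\F_\fp$ and $A_{\widetilde{F}}/\fP'=\F_\fp$, namely $0$ and a nonzero element; they are distinct, so by Hensel's lemma $A[\pi]\otimes_A\widehat{A}_\fp\cong\widehat{A}_\fp\times\widehat{A}_\fp$, which is the maximal order of $\widetilde{F}\otimes_F\widehat{F}_\fp$. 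Hence $A[\pi]$, and a fortiori $\cA=E\supseteq A[\pi]$, is maximal at $\fp$, so $\fc$ is prime to $\fp$ and $L$ is good.

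The step I expect to be the main obstacle is the cited structural fact used in part (1): that a supersingular Drinfeld module over $\overline{k}$ has maximal endomorphism order, at least after completing at $\fp$ (equivalently, that $\overline{D}$ is ramified at $\fp$ and $\overline{E}$ is maximal there). Granting it, the rest of part (1) is formal, resting only on the observation that in a division algebra over a local field one may intersect the maximal order with any subalgebra or subfield and recover the maximal order of the latter. Part (2) uses, besides (1), only the standard characterization of ordinariness in rank $2$ by the splitting of $\fp$ in $F(\pi)$, together with the elementary fact that $\pi$ then meets each local factor of $A_{\widetilde{F}}\otimes_A\widehat{A}_\fp$ so as to make $A[\pi]$ already maximal.
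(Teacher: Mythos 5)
Your proposal is correct and the argument for part (2) is essentially identical to the paper's: in the ordinary rank-$2$ case one reduces to showing $A[\pi]$ is already maximal at $\fp$, which you prove via Hensel's lemma applied to the distinct roots $0$ and $\bar u\neq 0$ of $m_\pi$ modulo $\fp$, while the paper observes equivalently that $f'(\pi)=\pi-\bar\pi$ is a unit at both primes of $\widetilde F$ above $\fp$; these are the same computation.

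In part (1) you take a genuinely different route. The paper's supersingular argument cites two facts directly: the Corollary to Theorem~1 of Yu (that $\cA$, completed at $\fP$, is a maximal order) and Laumon~(2.5.8) (that $\psi_0$ is supersingular iff $\fP$ is the unique place of $\widetilde F$ above $\fp$), from which $\fc$ coprime to $\fp$ is immediate. You instead take as your key input the Drinfeld analogue of Deuring's maximal-order theorem — that $\overline E=\End_{\overline k}(\psi_0)$ is locally maximal at $\fp$ in the $r^2$-dimensional division algebra $\overline D$ — and then descend through the chain $\overline E\supseteq E=\overline E\cap\Centr_{\overline D}(\pi)\supseteq\cA=E\cap L$ using the observation that, over a complete local field, the valuation ring of a division algebra meets any division subalgebra in that subalgebra's valuation ring. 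That observation is correct and is implicitly used by the paper too (to pass from $E_\fP$ maximal to $\cA_\fP$ maximal). The trade-off is that your starting point concerns endomorphisms over $\overline k$ rather than over $k$; it is a slightly stronger (and less directly quotable) statement than Yu's Corollary, which already lives at the level of $\End_k$. Your argument for the uniqueness of $\fP$ over $\fp$ via isoclinicity of $\psi_0[\fp^\infty]$ is a different way to derive what the paper simply cites from Laumon, and is fine. In short: part (2) matches the paper; part (1) reaches the same conclusion by descending from $\overline E$ rather than quoting maximality of $E$ at $\fP$, with the extra cost being the Deuring-type maximality input that you flag yourself as the main point to be taken on trust.
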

\begin{proof}
Note that $\fP$ does not split in the extension $L/\widetilde{F}$. 
By Corollary to Theorem 1 in \cite{Yu}, $\cA_\fP$ is a maximal $A_\fp$ order, so 
the conductor $\fc$ is coprime to $\fP$. The Drinfeld module $\psi_0$ is supersingular if 
and only if $\fP$ is the only place of $\widetilde{F}$ over $\fp$; see \cite[(2.5.8)]{Lau}. 
These two facts imply the first claim.  Now assume $r=2$. Then either $\psi_0$ 
is supersingular, or $\widetilde{F}$ is a separable quadratic extension of $F$ and $\fp=\fP\bar{\fP}$ splits in $\widetilde{F}$. 
In the second case $L=\widetilde{F}$, and if $f(x)=x^2-ax+b=0$ is the minimal polynomial of $\pi$ 
over $F$, then $a\not\in\fp$. Note that $f'(\pi)=2\pi-a=\pi-\bar{\pi}$ is divisible neither by $\fP$ 
nor $\bar{\fP}$, so $A[\pi]$ is maximal at $\fp$; the same then is true for $E=\cA$. 
\end{proof}

By the previous proposition, if $r=2$ then any $L$ is good. 
Since any $f_0\in E$, which is not in $A$, generates 
a maximal subfield, we conclude that $(\psi_0, f_0)$ has a CM-lifting, in direct analogy with 
Deuring's Theorem \ref{Deuring}. This proves Theorem \ref{thmDLL} in the introduction.

\subsection*{Acknowledgments}
The authors thank the Institute of Mathematics at the University of G\"{o}ttingen  for hosting them in the summer of 2012, 
while some of this research was carried out. They also thank their funding agencies 
(European Research Council, National Science Foundation, and Simons Foundation)  
and the University of Illinois at Chicago for providing them with research funds  
that enabled their collaboration. They thank Florian Breuer, Chris Hall and 
Lenny Taelman for useful conversations on material related to Theorem \ref{abhyankar-thm}. 
Finally, they thank the anonymous referees for carefully reading the paper and providing them with numerous useful comments. 


\end{document}